\documentclass[12pt,leqno,a4paper]{amsart}

\usepackage{amssymb,enumerate}
\usepackage{amsmath,amscd,amsthm}
\usepackage{mathrsfs}
\usepackage{txfonts}
\usepackage{hyperref,cite}
\usepackage{xcolor}
\usepackage{framed}

\newcommand{\Syl}{\operatorname{Syl}}
\newcommand{\Aut}{\operatorname{Aut}}
\newcommand{\Out}{\operatorname{Out}}
\newcommand{\Irr}{\operatorname{Irr}}
\newcommand{\IBr}{\operatorname{IBr}}
\newcommand{\IRRK}{\operatorname{IRR}_K}
\newcommand{\CHARK}{\operatorname{CHAR}_K}

\newcommand{\Cab}{\operatorname{Cab}}
\newcommand{\Res}{\operatorname{Res}}
\newcommand{\Ind}{\operatorname{Ind}}
\newcommand{\R}{\operatorname{R}}
\newcommand{\C}{\operatorname{C}}
\newcommand{\N}{\operatorname{N}}
\newcommand{\Z}{\operatorname{Z}}

\newcommand{\bH}{\mathbf{H}}
\newcommand{\bG}{\mathbf{G}}
\newcommand{\bL}{\mathbf{L}}
\newcommand{\bT}{\mathbf{T}}

\newcommand{\bS}{\mathbf{S}}
\newcommand{\bP}{\mathbf{P}}

\newcommand{\FF}{\mathbb{F}}
\newcommand{\ZZ}{\mathbb{Z}}
\newcommand{\CC}{\mathbb{C}}

\newcommand{\tbG}{\widetilde{\mathbf{G}}}
\newcommand{\tbS}{\widetilde{\mathbf{S}}}
\newcommand{\tbL}{\widetilde{\mathbf{L}}}

\newcommand{\tG}{\widetilde{G}}
\newcommand{\ts}{\tilde{s}}

\newcommand{\SL}{\operatorname{SL}}

\newcommand{\GL}{\operatorname{GL}}

\newcommand{\cB}{\mathcal{B}}

\newcommand{\cG}{\mathcal{G}}

\newcommand{\cP}{\mathcal{P}}
\newcommand{\cQ}{\mathcal{Q}}

\newcommand{\fI}{\mathfrak{I}}
\newcommand{\fS}{\mathfrak{S}}
\newcommand{\fA}{\mathfrak{A}}

\newcommand{\cE}{\mathscr{E}}

\newcommand{\sJ}{\mathscr{J}}

\newcommand{\ty}[1]{\mathsf{#1}}

\let\embed=\hookrightarrow

\let\la=\lambda

\let\ti=\times
\let\al=\alpha

\theoremstyle{theorem}

\newtheorem{thm}{Theorem}[section]
\newtheorem{lem}[thm]{Lemma}
\newtheorem{prop}[thm]{Proposition}
\newtheorem{cor}[thm]{Corollary}
\newtheorem{question}[thm]{Question}

\newtheorem{cond}[thm]{Condition}

\newtheorem*{conjA}{Conjecture A}
\newtheorem*{thmB}{Theorem B}

\theoremstyle{definition}
\newtheorem{defn}[thm]{Definition}

\begin{document}
	
%%%%%%%%%%%%%%%%%%%%%%%%%%%%%%%%%%%%%%%%%%%%%%%%%%%%%%%%%%%%
\title[]{On the number of $\ell$-regular conjugacy classes}
%%%%%%%%%%%%%%%%%%%%%%%%%%%%%%%%%%%%%%%%%%%%%%%%%%%%%%%%%%%%
	
\author{Zhicheng Feng}
\address[Z. Feng]{Shenzhen International Center for Mathematics and Department of Mathematics, Southern University of Science and Technology, Shenzhen 518055, China}
\makeatletter
\email{fengzc@sustech.edu.cn}
\makeatother

\dedicatory{Dedicated to Michel Brou\'e on the occasion of his 80th birthday}	
	
\thanks{The first author gratefully acknowledges financial support by NSFC (12622101), the second author by NSFC (12431001).}

\begin{abstract}
In this paper, we prove that for $\ell>5$, the number of $\ell$-regular conjugacy classes of a finite group is at least that of the normalizer of a Sylow $\ell$-subgroup.
\end{abstract}

\keywords{Alperin's bound conjecture, Brauer characters, normalizers of $e$-tori}
	
\subjclass[2020]{20C20, 20C33, 20C15}
	
\date{\today}
	
\maketitle

%%%%%%%%%%%%%%%%%%%%%%%%%%%%%%%%%%%%%%%%%%%%%%%%%%%%%%%%%%%%%%%%%%%%%%%%%
\section{Introduction}

Alperin weight conjecture (cf. \cite{Al87}) is one of the central problems in modular representation theory of finite groups.
It remains open now.
In 2011, Navarro and Tiep \cite{NT11} reduced it to simple groups; they introduced the inductive Alperin weight condition, and proved that if all non-abelian simple groups satisfy this condition, then Alperin weight conjecture is true.
After the work of \cite{AD12,Ma14,Sc16,FLZ21,Li21,FM22,FLZ22,FLZ23,FLZ26,FYZ23,AHL24}, except for a few simple groups with exceptional Schur multipliers, the inductive blockwise Alperin weight condition remains open only for groups of types $\ty D$ and $\ty E$.
However, as pointed by Malle, Navarro and Tiep \cite[p.1206]{MNT23}, it is fair to say that a full proof of AWC seems yet, unfortunately, out of reach.

Alperin weight conjecture has the following nice consequence.

\begin{conjA}[Alperin {\cite[Conjecture~A]{MNT23}}]\label{conj-bound}
If $G$ is a finite group, $\ell $ is a prime, and $P$ is a Sylow $\ell$-subgroup of $G$, then
\[	l(G) \ge l(\N_G(P)). \]
Here $l(G)$ denotes the number of $\ell$-regular conjugacy classes of $G$.
\end{conjA}	
	
In 2023, Malle, Navarro and Tiep \cite{MNT23} reduced Conjecture~A to simple groups. 
They defined the inductive Sylow-AW condition, and proved that if all non-abelian simple groups satisfy the inductive Sylow-AW condition, then Conjecture~A holds.
They also prove Conjecture~A for the prime $\ell=2$.
In 2026, Feng, Mart\'{i}nez and Rossi \cite{FMR26} developed this reduction, and established a self-reduction theorem.
	
In this paper, we prove Conjecture~A for all primes greater than 5.

\begin{thmB}
Assume that $\ell>5$. Then Conjecture~A holds.
\end{thmB}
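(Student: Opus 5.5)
By the reduction theorem of Malle, Navarro and Tiep \cite{MNT23}, it suffices to check that every non-abelian simple group $S$ satisfies the inductive Sylow-AW condition for each prime $\ell>5$. The more flexible self-reduction of Feng, Mart\'{i}nez and Rossi \cite{FMR26} lets us work only with the simple groups that actually occur as composition factors of a minimal counterexample. We then go through the classification.

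\textbf{Easy families.} For alternating groups, sporadic groups, and groups of Lie type in defining characteristic $\ell$, the condition is either already in the literature or follows from known results.
\begin{itemize}
\item In defining characteristic, the Sylow normalizer is a Borel subgroup $B$. Both $l(G)$ and $l(B)$ are counted via semisimple classes and $B/U\cong T$, and the needed equivariant bijections are those of \cite{Sc16} and related work.
\item For sporadic groups, and the finitely many groups with exceptional Schur multiplier, we check the condition directly. This is possible because with $\ell>5$ most Sylow subgroups are cyclic, and blocks with cyclic defect satisfy the condition by Dade's theory.
\item For $\fA_n$ we use the combinatorics of $\ell$-cores and the known structure of $\N_{\fS_n}(P)$.
\end{itemize}
Note that the inductive Sylow-AW condition only asks for an $\Aut$-equivariant injection $\IBr(\N_G(P))\to\IBr(G)$ (respecting central characters and some cohomological extendibility). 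It does not require a blockwise bijection, which makes it much weaker than the inductive AW condition.

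\textbf{Non-defining characteristic.} This is the main case. Let $\bG$ be simple simply connected with Frobenius $F$, let $G=\bG^F$, and let $\ell\nmid q$. Let $e$ be the order of $q$ modulo $\ell$, and let $\bS$ be a Sylow $e$-torus. Since $\ell>5$, $\ell$ is good for $\bG$ and does not divide $|\Z(G)|$, apart from a few small cases that we treat separately. By Malle's results, $\N_G(P)\le \N_G(\bS)$, and $\N_G(P)$ controls the relevant fusion.

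We then compare $l(\N_G(P))$, as a multiset of Brauer characters permuted by $\Aut(G)$, with $l(\N_G(\bS))$. Here $\C_G(\bS)=\bT^F$ is a torus for most $e$, or a minimal $e$-split Levi subgroup $\bL^F$ in general. The Brauer characters of $\N_G(\bS)$ are parametrized by pairs consisting of an $\ell'$-part of a character of $\bL^F$ and a character of a relative Weyl group, via $e$-Harish-Chandra theory.

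On the side of $G$, the irreducible Brauer characters are counted through unitriangularity of decomposition matrices. This is now known in general by Brunat--Dudas--Taylor, and it gives an $\Aut(G)$-stable basic set $\cE(G,\ell')$. In particular $|\IBr(G)|$ equals the number of $\ell'$-semisimple series characters.

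We build the injection Lusztig-series by Lusztig-series. For an $\ell'$-semisimple $s\in G^*$, Jordan decomposition reduces us to unipotent characters of $\C_{G^*}(s)$. Inside these we use the $d$-Harish-Chandra series above the $e$-cuspidal pairs $(\bL,\la)$ with $\bL$ minimal, and these inject into the characters of the corresponding $\N(\bS)$ through the relative Weyl group parametrization. The key inequality is that the unipotent characters in the principal $e$-series of $\C(s)$ are in bijection with $\Irr(W_{\C(s)}(\bS))$, while $\N_G(P)$ has at most that many $\ell$-regular classes over each $\ell'$-element of $\bT^F$.

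\textbf{Equivariance.} We make the map equivariant using the Cabanes--Späth analysis of $\Aut(G)$-action on $\N_G(\bS)$ and on $\cE(G,\ell')$, together with the extendibility results of Späth.

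\textbf{Main obstacle.} I expect the hardest step to be the comparison between $\N_G(P)$ and $\N_G(\bS)$. When $P$ is non-abelian, i.e. $\ell\mid |W(\bS)|$, the group $\N_G(P)$ is strictly smaller than $\N_G(\bS)$. Then one must show that $\ell$-regular classes of $\N_G(P)$ correspond to characters of $\ell'$-subgroups of relative Weyl groups, and that these still inject. For $\ell>5$ this only happens for classical groups of large rank, where it is a wreath-product situation. I would handle it by induction, via $\fS_n$-type counting on $\N_{\fS_n}(\text{Sylow})$, mirroring the alternating-group case.
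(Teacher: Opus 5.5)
\textbf{Verdict: genuine gap.} Your global half matches the paper. That half is the Cabanes--Sp\"ath injection $\Irr(\N_G(\bS))\hookrightarrow\Irr(G)$ compatible with Lusztig series, followed by an equivariant unitriangular basic-set bijection $\cE(G,\ell')\leftrightarrow\IBr(G)$. The problem is the step you flag as the main obstacle, passing from $\N_G(P)$ to $\N_G(\bS)$: it is never carried out, and your plan for it rests on a false premise.

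For $\ell>5$, a non-abelian $P$ (i.e.\ $\ell\mid|W_G(\bS)|$) does not only occur for classical groups of large rank. Indeed $7$ divides $|W(\ty E_7)|$ and $|W(\ty E_8)|$. For $q\equiv\pm1\pmod 7$ the relative Weyl group of the Sylow $e$-torus ($e\in\{1,2\}$) is the whole Weyl group. So $\ty E_7(q)$ and $\ty E_8(q)$ have non-abelian Sylow $7$-subgroups, and no wreath-product/$\fS_n$ induction reaches them.

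More fundamentally, suppose you did count $\ell$-regular classes of $\N_G(P)$ against characters of relative Weyl groups (your ``key inequality'', which is itself unproved). That gives only a numerical inequality. Definition~\ref{defn-SAW} requires an $\Aut(G)_P$-equivariant injection together with the central isomorphisms (\ref{equ:ceniso}). Nothing in your proposal produces $\succeq_c$ between $(\N_{\tbG^F\cB}(P)_\theta,\N_G(P),\theta)$ and $(\N_{\tbG^F\cB}(\bS)_\varphi,\N_G(\bS),\varphi)$. Also, on the $\N_G(\bS)$ side the right targets are ordinary characters lying over $\cE(\bL^F,\ell')$. Brauer characters of $\N_G(\bS)$ only see $\IBr$ of the relative Weyl groups, so they are the wrong objects to match with $\cE(G,\ell')$.

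\textbf{The missing idea} is to reduce this local step to the McKay conjecture, which is now a theorem in exactly the strong form needed.
By Proposition~\ref{prop:Syl}, $A=\Z(\bL)^F_\ell$ is a normal Sylow $\ell$-subgroup of $\bL^F$, and it is normal in $\N_{\tbG^F\cB}(\bS)$. Hence $P/A$ is a Sylow $\ell$-subgroup of $\N_G(\bS)/A$ with normalizer $\N_G(P)/A$. Moreover $\IBr(\N_G(P))=\Irr(\N_G(P)\mid 1_P)\subseteq\Irr_{\ell'}(\N_G(P)/A)$. Now apply Theorem~\ref{thm-MC} to $\N_G(\bS)/A\unlhd\N_{\tbG^F\cB}(\bS)/A$. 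This directly gives an equivariant injection into $\Irr(\N_G(\bS)\mid\cE(\bL^F,\ell'))$ with central isomorphisms (Proposition~\ref{prop:corr-nor}). It works uniformly in type, whether or not $P$ is abelian.

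\textbf{Your bookkeeping of exceptions is also off.}
The prime $\ell\ge 7$ divides $|\Z(G)|$ for infinitely many $\SL_n(q)$ and $\mathrm{SU}_n(q)$, not just ``a few small cases''; these are covered by the inductive AW condition for type $\ty A$ \cite{FLZ21}.
The exceptional cover of ${}^2\ty E_6(2)$ at $\ell=7$ has non-cyclic Sylow subgroups, so cyclic-defect theory does not apply. It needs the explicit check of Proposition~\ref{prop:2E6}.
Defining characteristic is covered by \cite[Thm.~C]{NT11}, not by \cite{Sc16}.
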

	
Therefore, combining a result of Malle, Navarro and Tiep \cite[Thm.~B]{MNT23}, Conjecture~A is now open only for the primes 3 and 5.

In the proof of the inductive McKay conjecture, Malle's group, which is the normalizer of a Sylow $e$-torus and introduced by Malle \cite{Ma07} in 2007, is a crucial ingredient (see for instance \cite{CS26}).
In this paper, we use Malle's groups instead of normalizers of Sylow $\ell$-subgroup, and thus use the methods developed in the verification of the inductive McKay conjecture.	
This leave the inductive Sylow-AW condition open only for simple groups of types $\ty E$ and odd bad primes (see Theorem~\ref{thm:exc-covering}, Theorem~\ref{thm:lie-good} and Corollary~\ref{cor:classical})

This paper is organized as follows. 
After introducing some notation and elementary results in Section ~\ref{sec:pre}, we recall the central isomorphism between character triples and the inductive Sylow-AW condition in Section~\ref{sec:central}.
In Section ~\ref{sec:exceptional}, we deal with the exceptional covering groups.
Finally, in Section~\ref{sec:lie} we verify the inductive Sylow-AW condition for groups of Lie type at good primes.

%%%%%%%%%%%%%%%%%%%%%%%%%%%%%%%%%%%%%%%%%%%%%%%%%%%%%%%%%%%%%%%%%%%%%%%%%
\section{Preliminaries}\label{sec:pre}
	
\subsection{Notation}
	
For any prime power $r$, we denote by $\mathbb F_r$ the field of $r$ elements and by $\overline{\mathbb F}_r$ its algebraic closure.

In this paper, we consider modular representations with respect to the prime $\ell$. 
For the character theory of a finite group $G$, we mainly follow the notation of \cite{Is06}, \cite{Na98} and \cite{Na18}.
As usual, $\Irr(G)$ denotes the set of irreducible complex characters of $G$, while $\IBr(G)$ denotes the set of irreducible ($\ell$-)Brauer characters of $G$.
Let $K\in\{\overline{\mathbb F}_\ell,\, \CC\}$.
We write $\IRRK = \Irr$ if $K = \CC$, and $\IRRK = \IBr$ if $K = \overline{\mathbb F}_\ell$.
For instance, we have $\IRRK(G) = \Irr(G)$ when $K = \CC$, and $\IRRK(G) = \IBr(G)$ when $K = \overline{\mathbb F}_\ell$.

The restriction of a (Brauer) character $\chi$ of $G$ to a subgroup $H \le G$ is denoted by $\Res^G_H(\chi)$, and we denote by $\IRRK(H \mid \chi)$ the set of irreducible constituents of $\Res^G_H(\chi)$.
For a (Brauer) character $\psi$ of $H$, let $\Ind^G_H(\psi)$ be the character induced from $\psi$ to $G$, and denote by $\IRRK(G \mid \psi)$ the set of irreducible constituents of $\Ind^G_H(\psi)$.
For a subset $\mathcal H\subseteq\IRRK(H)$, we define
\[\IRRK(G\mid\mathcal H)=\bigcup_{\psi\in\mathcal H}\IRRK(G\mid\psi)\]
and for a subset $\cG\subseteq\IRRK(G)$, we define
\[\IRRK(H\mid\mathcal G)=\bigcup_{\chi\in\mathcal G}\IRRK(H\mid\chi).\]
If $N \unlhd G$, we sometimes identify the (Brauer) characters of $G/N$ with those of $G$ whose kernel contains $N$.

The cardinality of a set or the order of a finite group $X$ is denoted by $|X|$.
If a group $A$ acts on a finite set $X$, we write $A_x$ or $I_A(x)$ for the stabilizer of $x \in X$ in $A$.
%If a group $B$ acts a finite group $G$ via automorphisms, then $B$ acts on $\IRRK(G)$.
%We denote by $\Irr_B(G)$ the set of $B$-invariant characters in $\Irr(G)$.

\begin{defn}[{\cite[Definition~5.7]{CS17a}}]
Let \( N \unlhd G \) and \( \mathcal{N} \subseteq \IRRK(N) \). We say that \textit{maximal extendibility holds for \(\mathcal{N}\) with respect to \( N \unlhd G \)} if every \( \chi \in \mathcal{N} \) extends (as irreducible character) to \( G_\chi \). Then, an \textit{extension map for \(\mathcal{N}\) with respect to \( N \unlhd G \)} is a map
\[
\Lambda \colon \mathcal{N} \to \bigcup_{N \leq I \leq G} \IRRK(I),
\]
such that for every \( \chi \in \mathcal{N} \), the character \( \Lambda(\chi) \in \IRRK(G_\chi) \) is an extension of \( \chi \).
\end{defn}

For a prime $r$ and any integer $m$ with $r\nmid m$, we set
\[e_r(m) := \text{order of } m \text{ modulo }
\begin{cases} 
	\ r & \text{if } r\ \text{is odd} , \\
\ 	4 & \text{if } r = 2.
\end{cases}
\]

\subsection{On maximal abelian normal subgroups}

	Let $G$ be a group and $A$ a subgroup of $G$.
	Then $G$ is said to be \emph{Cabanes} with \emph{Cabanes subgroup} $A$ if $A$ is the unique maximal abelian normal subgroup of $G$.
	In this situation, we write $A=\Cab(G)$.

See \cite[\S2]{Ca94} for some properties for Cabanes groups.
It was observed by Cabanes \cite{Ca94} that the Sylow $\ell$-subgroups of finite reductive groups are Cabanes provided that $\ell\ge 5$ and $\ell$ is not the defining prime.

\begin{lem}\label{lem:self-cen}
	Let $G$ be a group.
	\begin{enumerate}[\rm(i)]
		\item If $A$ is a normal abelian subgroup of $G$ with $\C_G(A)\subseteq A$, then $A$ is a maximal abelian normal subgroup of $G$.
		\item Assume further that $G$ is Cabanes with $A=\Cab(G)$. If $G/A$ is supersolvable, then $\C_G(A)=A$.
	\end{enumerate}	
\end{lem}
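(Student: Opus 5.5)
For (i) I would argue directly from the definition. For (ii) I would argue by contradiction, using the supersolvability of $G/A$ to produce an abelian normal subgroup strictly larger than $A$.

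\emph{Part (i).} Let $B$ be an abelian normal subgroup of $G$ with $A\subseteq B$. Since $B$ is abelian, it centralizes $A$, so $B\subseteq \C_G(A)\subseteq A$. Hence $B=A$, which shows that $A$ is a maximal abelian normal subgroup of $G$. Nothing more is needed here.

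\emph{Part (ii).} Put $C=\C_G(A)$. Since $A$ is normal in $G$, so is $C$, and $A\subseteq C$ because $A$ is abelian. Suppose, for a contradiction, that $A\neq C$. Then $C/A$ is a nontrivial normal subgroup of the supersolvable group $G/A$.

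The key structural input is that every chief factor of a supersolvable group is cyclic of prime order. I will quote this as standard: refine a cyclic normal series and apply Jordan--H\"older for chief series. Choose a chief series of $G/A$ passing through $C/A$. Its first term above the trivial subgroup gives a subgroup $B$ with $A< B\le C$, $B\unlhd G$ and $|B/A|=p$ prime.

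I then claim that $B$ is abelian.
\begin{itemize}
\item Since $B\subseteq \C_G(A)$, we have $A\subseteq \Z(B)$.
\item Therefore $B/\Z(B)$ is a quotient of the cyclic group $B/A$, so it is cyclic.
\item The standard lemma that a cyclic $B/\Z(B)$ forces $B$ to be abelian then gives the claim.
\end{itemize}
Thus $B$ is an abelian normal subgroup of $G$ properly containing $A$. This contradicts the hypothesis that $A=\Cab(G)$ is the unique maximal abelian normal subgroup, since $B$ lies in some maximal abelian normal subgroup, which must then equal $A$. Hence $\C_G(A)=A$.

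The only step that needs any care is the existence of a $G$-normal subgroup $B$ with $A<B\le C$ and $|B/A|$ prime. This is exactly where supersolvability is used: for a merely solvable $G/A$ the minimal normal subgroup inside $C/A$ could be elementary abelian of higher rank, and the argument that $B$ is abelian would break down.
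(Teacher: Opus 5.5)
Your proof is correct and follows the paper's argument. Part (i) is immediate. For (ii), you use supersolvability of $G/A$ to find a $G$-normal $B$ with $A<B\le\C_G(A)$ and $B/A$ cyclic, which must then be abelian and so contradicts the maximality of $A$. The only difference is cosmetic: the paper writes $B=\langle A,d\rangle$ with $d\in\C_G(A)$ to see directly that $B$ is abelian, whereas you argue via $A\le\Z(B)$ and $B/\Z(B)$ cyclic.
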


\begin{proof}
	The assertion (i) follows by construction, and
	we consider (ii). Assume that $\C_G(A) \neq A$.
   Recall that every minimal normal subgroup of a supersolvable group is cyclic.
   Since $A \unlhd \C_G(A) \unlhd G$, we have $\C_G(A)/A \unlhd G/A$, whence there exists a normal subgroup $D$ of $G$ such that $A\subsetneq D \subseteq \C_G(A)$ and $D/A$ is cyclic.
   We may write $D = \langle A, d \rangle$ for some $d \in D$.
   Then $D$ is abelian because $d \in \C_G(A)$, contradicting the maximality of $A$.
   Therefore, $\C_G(A) \subseteq A$, which implies that $\C_G(A)=A$.
\end{proof}

%%%%%%%%%%%%%%%%%%%%%%%%%%%%%%%%%%%%%%%%%%%%%%%%%%%%%%%%%%%%%%%%%%%%%%%%%%%%%%%%%%%%

\section{Central isomorphisms between character triples and the inductive conditions}\label{sec:central}

Let \( K \in \{\overline{\mathbb F}_\ell, \CC\} \). 
Let $G$ be a finite group, $H$ a subgroup of $G$.
For $\psi\in\IRRK(H)$, we denote by $\CHARK(G\mid\psi)$ the set of characters of $G$ whose constituents are in $\IRRK(G\mid\psi)$.

We say $(G,N,\theta)$ a \emph{character triple}, if $N\unlhd G$, and $\theta\in\IRRK(N)$ is $G$-invariant.
To emphasize $K$, sometimes we also call it a $K$-character triple.
We refer to Chapter 11 of \cite{Is06} and Chapter 5 and Chapter 10 of \cite{Na18} for the terminology of character triples and further details.

\subsection{Induced projective representations}

A \emph{projective $K$-representation} of $G$ is a function $\cP\colon G\to\GL_n(K)$ such that for every $g,h\in G$ there exists some $\al(g,h)\in K^\ti$ satisfying $\cP(g)\cP(h)=\al(g,h)\cP(gh)$.
Then $\al$ is a $2$-cocycle, also called the \emph{factor set} of $\cP$.

Let  $(G,N,\theta)$ be a $K$-character triple.
By \cite[Chap.~3, Thm.~5.7]{NT89}, there is a projective \(K\)-representation \(\mathcal{P} : G \to \operatorname{GL}_{\theta(1)}(K)\) of $G$, called \emph{a projective \(K\)-representation of \(G\) associated with \((G,N,\theta)\)}, satisfying that
 \(\mathcal{P}|_N\) is a linear (\(K\)-)representation of $N$ affording \(\theta\),
and \(\mathcal{P}(gn) = \mathcal{P}(g)\mathcal{P}(n)\) and \(\mathcal{P}(ng) = \mathcal{P}(n)\mathcal{P}(g)\) for every \(g \in G\) and \(n \in N\).
The factor set $\al$ of $\cP$ satisfies that $\al(1,1)=\al(g,n)=\al(n,g)=1$ for
every $n\in N$ and $g\in G$, and thus it can be regarded as a $2$-cocycle
of $G/N$.

Let $G=NH$ with $N\unlhd G$, and set $M=N\cap H$ so that $(H,M,\varphi)$ is a $K$-character triple.
Let  $\cP\colon H\to\GL_m(K)$ be a projective $K$-representation of~$H$ associated with $(H,M,\varphi)$.
Choose $n_1,\ldots,n_s\in N$ (where $s=|G|/|H|$) as representatives of the $M$-cosets in $N$.
Consequently, $n_1,\ldots,n_s$ also serve as representatives for the $H$-cosets in $G$.
For $1\le i,j\le s$, define $\widehat\cP_{i,j}\colon G\to \textrm{Mat}_{m}(K)$ by 
\[\widehat\cP_{i,j}(x) =
\begin{cases}
	\cP(n_i^{-1}xn_j)  & \ \mbox{when } n_i^{-1}xn_j\in H, \\
	0  & \ \mbox{otherwise.}
\end{cases}   \] 
Define the \emph{induced projective $K$-representation of~$\cP$ to~$G$} (with respect to $n_1,\ldots,n_s$) by
\begin{align*}
\Ind^G_{H,N}(\cP)\colon\ &G\to\GL_{ms}(K),\\ &x\mapsto
\left (
\begin{matrix}
	\widehat\cP_{1,1}(x) & \cdots & \widehat\cP_{1,s}(x) \\
	\vdots &  & \vdots \\
	\widehat\cP_{s,1}(x) & \cdots & \widehat\cP_{s,s}(x)
\end{matrix}
\right  ).
\end{align*}
Recall that this is used in the proofs of \cite[Thm.~3.14]{NS14} and \cite[Prop.~5.8]{F25}.
One shows that $\Ind^G_{H,N}(\cP)$ is a projective $K$-representation of $G$ with factor set $\widehat \al$ such that for every $x,y\in G$, $\widehat \al(x,y)=\al(x_1,y_1)$ where $x_1,y_1\in H$ satisfy $x=x_1 n_x$ and $y=y_1n_y$ for some $n_x,n_y\in N$.
Equivalently, $\widehat\al$ and $\al$ correspond via the isomorphism $G/N\cong H/M$.
If $\Ind_M^N(\varphi)\in\IRRK(N)$, then $\Ind^G_{H,N}(\cP)$ is a projective $K$-representation associated with the character triple $(G,N,\Ind_M^N(\varphi))$.
In particular, when $\cP$ is a linear $K$-representation of~$H$, the construction $\Ind^G_{H,N}(\cP)$ coincides with the usual induced linear $K$-representation of~$\cP$ to~$G$ (see e.g. \cite[p.~75]{CR62}).

\subsection{Central isomorphisms between character triples}

The notion of isomorphism between character triples is standard in representation theory and has found significant applications. A more restrictive notion, namely that of \emph{central isomorphism} of character triples, was introduced in \cite{NS14} and has proved useful in the context of reduction theorems. For further details, we refer the reader to the expositions in \cite{Na18} and \cite{Sp17}; we recall the definition as follows.

Let $(G,N,\theta)$ and $(H,M,\varphi)$ be two $K$-character triples. 
We write
\[(G,N,\theta)\succeq(H,M,\varphi)\]
if $G=NH$, $H\le G$ and $N\cap H=M$, and there exist projective $K$-representations $\cP$ and $\cP'$ associated with $(G,N,\theta)$ and $(H,M,\varphi)$ respectively, with factor sets $\al$ and
	$\al'$ such that $\al|_{H\ti H}=\al'$.
In this situation, we also say that \emph{$(G,N,\theta)\succeq(H,M,\varphi)$ is
	given by $(\cP,\cP')$}.

\begin{lem}\label{lem:iso-triple}
Let $(G,N,\theta)$ and $(H,M,\varphi)$ be two $K$-character triples such that
\[(G, N, \theta) \succeq (H, M, \varphi)\] is given by \((\mathcal{P}, \mathcal{P}')\). Then for every subgroup \(J\) with \(N \leq J \leq G\), the linear map 
\[\sigma_J : \CHARK(J \mid \theta) \longrightarrow \CHARK(J \cap H \mid \varphi)\] given by
\[
	\textup{Trace}(\cQ \otimes \cP|_J) \mapsto \textup{Trace}(\cQ|_{J \cap H} \otimes \cP'|_{J \cap H}),
\]
for any projective representation $\cQ$ of $J/N$, whose factor set is inverse to the one of $\cP|_{J}$, is a well-defined map satisfying that for any $N\le K\le J\le G$ and any $\chi\in\CHARK(J\mid\theta)$,
\begin{enumerate}[\rm(i)]
\item \(\sigma_J(\IRRK(J \mid \theta)) = \IRRK(J \cap H \mid \varphi)\),
\item $\Res^{J\cap H}_{K\cap H}(\sigma_{J}(\chi))
=\sigma_{K}(\Res^{J}_{K}(\chi))$, 
\item $\sigma_{J}(\chi\beta)=\sigma_{J}(\chi)\Res^J_{J\cap H}(\beta)$ for
every $\beta\in\CHARK(J/N)$, and
\item $\sigma_{J}(\chi)^h=\sigma_{J^h}(\chi^h)$ for every $h\in H$.
\end{enumerate}	
In particular, $\sigma$ is a strong isomorphism between the character triples $(G,N,\theta)$ and $(H,M,\varphi)$ in the sense of \cite[Problem 11.13]{Is06}.
\end{lem}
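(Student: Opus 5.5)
We follow the standard construction from \cite[Thm.~3.14]{NS14} (see also \cite[Lem.~10.10]{Na18}), checking that it works verbatim for $K=\overline{\mathbb F}_\ell$. The first task is to show that $\sigma_J$ is well defined. Let $\chi\in\IRRK(J\mid\theta)$. By the theory of projective representations associated with character triples (\cite[Chap.~3]{NT89}), $\chi$ is afforded by some $\cQ\otimes\cP|_J$, where $\cQ$ is an irreducible projective representation of $J/N$ whose factor set is $\al^{-1}|_{J\ti J}$. Moreover, $\cQ$ is unique up to similarity: if $\cQ\otimes\cP|_J$ and $\cQ_1\otimes\cP|_J$ are similar, restricting to $N$ and using Schur's lemma on the absolutely irreducible $\cP|_N$ shows that $\cQ$ and $\cQ_1$ are similar. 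An arbitrary element of $\CHARK(J\mid\theta)$ is a sum of such traces, since direct sums of $\cQ$'s correspond to sums. We then define $\sigma_J$ additively.

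Next we check that the image lies where it should. Since $J=N(J\cap H)$, we have $J/N\cong (J\cap H)/M$. The factor set of $\cP'|_{J\cap H}$ is $\al'=\al|_{H\ti H}$, so $\cQ|_{J\cap H}\otimes\cP'|_{J\cap H}$ has trivial factor set; hence it is a genuine $K$-representation of $J\cap H$. The trace of this representation does not depend on the choice of $\cQ$ within its similarity class. The key point is that the factor sets are equal on $H\ti H$ and not merely cohomologous. If they were only cohomologous, we would have to rescale by a function $\mu$, which could change the trace; equality of factor sets is what removes this ambiguity.

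For (i), we use the bijection $\cQ\leftrightarrow\cQ|_{J\cap H}$ between irreducible projective representations of $J/N$ and of $(J\cap H)/M$ with the corresponding factor sets. Applying the same Clifford correspondence on both sides (see \cite[Thm.~10.?]{Na18} and \cite[Chap.~3]{NT89}) sends irreducibles to irreducibles. Both sides are parametrised by the same set, so $\sigma_J$ is a bijection. For (ii), restricting $\cQ\otimes\cP|_J$ to $K$ gives $\cQ|_K\otimes\cP|_K$, and both operations commute with restriction. For (iii), if $\beta$ is afforded by a linear representation $\cB$ of $J/N$, then $\chi\beta$ is afforded by $(\cQ\otimes\cB)\otimes\cP|_J$, and $\cQ\otimes\cB$ again has factor set $\al^{-1}$.

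The main obstacle is (iv). We need $\cP^h$ to relate to $\cP$ compatibly with $\cP'$. Take $P_h=\cP(h)$. Then $\cP(h^{-1}xh)$ and $\cP(h)^{-1}\cP(x)\cP(h)$ differ by the scalar $c(x)=\al(h^{-1},x)\al(h^{-1}x,h)/\al(h^{-1},h)$. Conjugating $\cQ$ accordingly defines $\cQ^h(x)=c(x)^{-1}\cQ(hxh^{-1})$, with factor set $\al^{-1}$ on $J^h$, and $\chi^h$ is the trace of $\cQ^h\otimes\cP|_{J^h}$. The scalar $c$ is computed from $\al$ using only elements of $H$ when $x\in J^h\cap H$. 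Since $\al'=\al|_{H\ti H}$ and $\cP'(h)$ plays the same role for $\cP'$, the same scalar appears on the $H$-side. This gives $\sigma_J(\chi)^h=\sigma_{J^h}(\chi^h)$. Finally, the strong-isomorphism statement follows from (i)--(iv) as in \cite[Problem 11.13]{Is06}.
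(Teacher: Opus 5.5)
Your argument is correct and is essentially the proof the paper relies on: the paper gives no independent argument but cites \cite[Thm.~2.2 and Cor.~2.4]{Sp18} and \cite[Thm.~3.1]{SV16}, which carry out this same projective-representation construction, the latter for Brauer characters. Two details need tidying: in (iv), with your $c$ (so that $\cP(h)^{-1}\cP(x)\cP(h)=c(x)\,\cP(h^{-1}xh)$), $\chi^h$ is afforded by $\cQ^h\otimes\cP|_{J^h}$ with $\cQ^h(y)=c(hyh^{-1})\,\cQ(hyh^{-1})$ for $y\in J^h$ (the scalar still involves only $\alpha$ on $H\times H$, so your conclusion stands), and for $K=\overline{\mathbb F}_\ell$ well-definedness for non-semisimple $\cQ$ should be argued via composition factors of $\cQ$, which correspond to those of $\cQ|_{J\cap H}$, since equal Brauer characters do not force similar representations.
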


We note that here for characteristic $\ell$ case, $\textup{Trace}$ means lifting to the complex number field to get Brauer characters.

\begin{proof}
See \cite[Thm.~2.2 and Cor.~2.4]{Sp18} and \cite[Thm.~3.1]{SV16}.
\end{proof}

Let $\cP$ be a projective $K$-representation associated with a $K$-character triple $(G,N,\theta)$. Then the matrix $\cP(c)$ is scalar for all $c\in\C_G(N)$.

\begin{defn}[{\cite[Definition~3.1]{Sp17}}]
Let $(G,N,\theta)$ and $(H,M,\varphi)$ be two $K$-character triples such that
\[(G, N, \theta) \succeq (H, M, \varphi)\] is given by \((\mathcal{P}, \mathcal{P}')\).
	We write
	\[(G,N,\theta)\succeq_{c}(H,M,\varphi)\]
	if the following conditions are satisfied:
	\begin{enumerate}[\rm(i)]
		\item $\C_G(N)\le H$.
		\item For every $c\in\C_G(N)$ the scalars associated to $\cP(c)$ and
		$\cP'(c)$ coincide.
	\end{enumerate}
	In this situation, we say that \emph{$(G,N,\theta)\succeq_{c}(H,M,\varphi)$
		is given by $(\cP,\cP')$}.
\end{defn}

The relation $\succeq_c$ is called a \emph{central isomorphism} between character triples.

\subsection{A Clifford property}

\begin{prop}\label{prop:Cli}
	Let \( G \) be a finite group. Suppose that \( N \unlhd G \) and \( H \leq G \) such that \( G = NH \).  Let \( M := N \cap H \). Suppose that $\mathcal N\subseteq\IRRK(N)$ and $\mathcal M\subseteq \IRRK(M)$ are $H$-stable sets such that there exists an \( H \)-equivariant bijection
	\[	\Omega : \mathcal N \to  \mathcal M	\]
	such that \( (G_{\theta}, N, \theta) \succeq_c (H_\varphi, M, \varphi) \) for every $\theta\in\mathcal N$ and $\varphi=\Omega(\theta)$.
	Furthermore, let \( J \unlhd G \) with \( N \le J \), and let $L=J\cap H$.  Then there exists an \( H \)-equivariant bijection
	\[	\Psi :   \IRRK(J\mid\mathcal N)\to	\IRRK(L\mid\mathcal M)\]
	such that \( (G_{\vartheta}, J, \vartheta) \succeq_c (H_\phi, L, \phi) \) for every 
	$\vartheta\in\IRRK(J\mid\mathcal N)$ and $\phi=\Psi(\vartheta)$.
	
	If, moreover, $J/N$ is abelian, then $\Psi$ can be chosen to satisfy that $\Psi(\vartheta\delta)=\Psi(\vartheta)\Res^J_L(\delta)$ for every \( \vartheta \in \IRRK(J\mid\mathcal N)\) and $\delta\in\IRRK(J/N)$.
\end{prop}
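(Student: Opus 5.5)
The plan follows the standard "Clifford theory above central isomorphisms" argument, as in \cite[Thm.~2.16]{NS14}, \cite{Sp17} and \cite[Prop.~5.8]{F25}. Everything is handled orbit by orbit. Fix an $H$-transversal $\mathcal T$ of $\mathcal N$. Since $G=NH$, every $G$-orbit in $\mathcal N$ meets $\mathcal T$; the $G$-orbits of $\mathcal N$ and the $H$-orbits of $\mathcal M$ then correspond via $\Omega$.

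Since $J\unlhd G$, every $\vartheta\in\IRRK(J\mid\mathcal N)$ lies over a unique $G$-orbit of $\mathcal N$, hence over some $\theta\in\mathcal T$. Likewise every $\phi\in\IRRK(L\mid\mathcal M)$ lies over $\Omega(\theta)$ for a $\theta\in\mathcal T$ that is unique up to $H_\theta$-conjugacy. It therefore suffices to construct, for each $\theta\in\mathcal T$ with $\varphi=\Omega(\theta)$, an $H_\theta$-equivariant bijection
\[\Psi_\theta\colon\IRRK(J_\theta\mid\theta)\to\IRRK(L_\varphi\mid\varphi)\]
and then to transport it by Clifford correspondence. Note that $G_\theta=NH_\theta$ and $H_\theta=H_\varphi$ by equivariance of $\Omega$, and $J_\theta\cap H=L_\varphi$.

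\textbf{Step 1.} Take $\sigma$ from Lemma~\ref{lem:iso-triple}, applied to $(G_\theta,N,\theta)\succeq_c(H_\varphi,M,\varphi)$ with intermediate group $J_\theta$. Parts (i) and (iv) give an $H_\theta$-equivariant bijection $\sigma_{J_\theta}\colon\IRRK(J_\theta\mid\theta)\to\IRRK(L_\varphi\mid\varphi)$. By Clifford correspondence, inducing from $J_\theta$ to $J$ and from $L_\varphi$ to $L$ identifies these sets with the characters of $J$ over $\theta$ and of $L$ over $\varphi$. Define $\Psi(\vartheta)$ by this route and extend it $H$-equivariantly along orbits; well-definedness follows from the $H_\theta$-equivariance. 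Linearity with $\delta\in\IRRK(J/N)$ comes from part (iii) of Lemma~\ref{lem:iso-triple}, since $\sigma_{J_\theta}(\psi\,\delta|_{J_\theta})=\sigma_{J_\theta}(\psi)\,\delta|_{L_\varphi}$ and induction commutes with multiplication by restrictions of characters of $J$. This gives the final assertion once we check that $\Psi$ is independent of choices; multiplication by $\delta$ preserves the orbit over $\theta$, so this is fine.

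\textbf{Step 2 (main obstacle).} We must show that $(G_\vartheta,J,\vartheta)\succeq_c(H_\phi,L,\phi)$. First reduce to $\vartheta$ lying over $\theta\in\mathcal T$ with Clifford correspondent $\psi\in\IRRK(J_\theta\mid\theta)$. Then $G_\vartheta=J\,(G_\vartheta)_{\theta}$ and $H_\phi=L\,(H_\phi)_\varphi$, with $(G_\vartheta)_\theta=G_{\theta,\psi}$ and $H_{\phi}\cap H_\theta=H_{\varphi,\sigma(\psi)}$ by equivariance. Also $\C_{G_\vartheta}(J)\le\C_G(N)\le H$, which gives condition (i) of the definition of $\succeq_c$.

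Now proceed in two stages:
\begin{enumerate}[\rm(a)]
\item Take projective representations $\cP,\cP'$ giving the central isomorphism for $\theta,\varphi$. Form $\cQ$ on $G_{\theta,\psi}$ and $\cQ'$ on $H_{\varphi,\sigma(\psi)}$ as tensor products of $\cP$ and $\cP'$ with a common projective representation of $G_{\theta,\psi}/N\cong H_{\varphi,\sigma(\psi)}/M$ affording $\psi$ and $\sigma(\psi)$. This yields $(G_{\theta,\psi},J_\theta,\psi)\succeq_c(H_{\varphi,\sigma(\psi)},L_\varphi,\sigma(\psi))$. The factor sets agree by construction, and the central scalars agree because elements of $\C(J_\theta)\subseteq\C(N)$ act through $\cP$ and $\cP'$ with equal scalars, times the same scalar from the common factor.
\item Apply the induced projective representations $\Ind^{G_\vartheta}_{G_{\theta,\psi},J_\theta}(\cQ)$ and $\Ind^{H_\phi}_{H_{\varphi,\sigma(\psi)},L_\varphi}(\cQ')$ from the Induced projective representations subsection. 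Their factor sets correspond to those of $\cQ$ and $\cQ'$ via $G_\vartheta/J\cong G_{\theta,\psi}/J_\theta$, so they still restrict compatibly.
\end{enumerate}

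The delicate point is choosing coset representatives for the $L_\varphi$-cosets in $L$. These also serve as representatives for the $J_\theta$-cosets in $J$, since $J=NL$ and $J_\theta=NL_\varphi$. With this choice, the induced representation on $H_\phi$ is literally the restriction of the one on $G_\vartheta$ in its factor set.

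For the central scalars, take $c\in\C_{G_\vartheta}(J)$. Then $c$ centralizes every coset representative, so the induced matrix is block diagonal with blocks $\cQ(c)$. These blocks are scalar and agree with $\cQ'(c)$ by (a), which gives condition (ii). Verifying this block-diagonal scalar computation and the compatibility of the factor sets carefully is where I expect most of the work to lie.
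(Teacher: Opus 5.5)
Your proposal is correct and follows essentially the same route as the paper. You restrict $\sigma_{J_\theta}$ from Lemma~\ref{lem:iso-triple} orbit by orbit, obtain $(G_{\theta,\psi},J_\theta,\psi)\succeq_c(H_{\varphi,\sigma(\psi)},L_\varphi,\sigma(\psi))$ by tensoring $\cP,\cP'$ with a common projective representation of the stabiliser quotient, and pass to $(G_\vartheta,J,\vartheta)\succeq_c(H_\phi,L,\phi)$ via Clifford correspondence and induced projective representations; the only difference is that you spell out the choice of coset representatives in $L$ and the block-diagonal scalar check, which the paper delegates to the proof of \cite[Thm.~3.14]{NS14}.
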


\begin{proof}
	Let $\mathscr T$ be a complete set of representatives of the $G$-orbits in $\mathcal N$. Then $\Omega(\mathscr T)$ is a complete set of representatives of the $H$-orbits in $\mathcal M$.
	
	Let $\theta\in\mathscr T$ and $\varphi=\Omega(\theta)$.
	Suppose that \( (G_{\theta}, N, \theta) \succeq_c (H_\varphi, M, \varphi) \) is given by $(\cP,\cP')$.
	Since $\Omega$ is $H$-equivariant, $G_\theta=NH_{\varphi}$ and $\N_G(J_\theta)=N\N_H(L_\varphi)$.
	Let $\sigma$ be the strong isomorphism between the character triples $(G_\theta,N,\theta)$ and $(H_\varphi,M,\varphi)$ as in Lemma~\ref{lem:iso-triple}.
	Denote by \[\widehat\Omega_\theta\colon\IRRK(J_\theta\mid\theta)\to\IRRK(L_\varphi\mid\varphi)\]
	the restriction of $\sigma_{J_\theta}$.
	Then $\widehat\Omega_\theta(\widehat\theta\beta)=\widehat\Omega_\theta(\widehat\theta)\Res^{J_\theta}_{L_\varphi}(\beta)$ for every $\widehat\theta\in\IRRK(J_\theta\mid\theta)$ and $\beta\in\IRRK(J_\theta/N)$ with $\beta(1)=1$, and $\widehat\Omega_\theta$ is $\N_H(L_\varphi)_\varphi$-equivariant.
	
	Let $\widehat\theta\in\IRRK(J_\theta\mid\theta)$ and $\widehat\varphi=\widehat\Omega_\theta(\widehat\theta)$.
	Suppose that $\widehat\theta=\textup{Trace}(\cP|_{J_\theta}\otimes\cQ)$ and $\widehat\varphi=\textup{Trace}(\cP'|_{L_\varphi}\otimes\cQ)$ where $\cQ$ is a projective representation of $J_\theta/N\cong L_\varphi/M$.
	Let $\widehat\cP$ be a projective representation associated with $((G_\theta)_{\widehat\theta},J_\theta,\widehat\theta)$.
	Then by the proof of \cite[Chap.~3, Thm.~5.8]{NT89} $\widehat\cP=\cP|_{(G_\theta)_{\widehat\theta}}\otimes \mathcal R$ where $\mathcal R$ is a projective representation of $(G_\theta)_{\widehat\theta}/N$ such that $\mathcal R|_{J_\theta}=\cQ$.
	Now define $\widehat\cP'=\cP'|_{(H_\varphi)_{\widehat\varphi}}\otimes\mathcal R|_{(H_\varphi)_{\widehat\varphi}}$.
	Then similar as the proof of \cite[Prop.~3.9]{NS14}, one checks that $(\widehat\cP,\widehat\cP')$ gives 
	\[((G_\theta)_{\widehat\theta},J_\theta,\widehat\theta)\succeq_c ((H_\varphi)_{\widehat\varphi},L_\varphi,\widehat\varphi).\] 
	
	Let $\vartheta=\Ind^J_{J_\theta}(\widehat\theta)$ and $\phi=\Ind^L_{L_\varphi}(\widehat\varphi)$.
	By Clifford theory, $\vartheta\mapsto\phi$ yields a bijection  \[\Psi_\theta\colon\IRRK(J\mid\theta)\to\IRRK(L\mid\varphi)\]
	which is then $H_\varphi$-equivariant.
	Moreover, if $J/N$ is abelian, then for any $\delta\in\IRRK(J/N)$,
	\begin{align*}
		&\Psi_\theta(\vartheta\delta)=\Psi_\theta(\Ind^J_{J_\theta}(\widehat\theta)\delta)=
		\Psi_\theta(\Ind^J_{J_\theta}(\widehat\theta\Res^J_{J_\theta}(\delta)))\\
		=&\Ind^L_{L_\varphi}(\widetilde\Omega(\widehat\theta\Res^J_{J_\theta}(\delta)))
		=\Ind^L_{L_\varphi}(\widetilde\Omega(\widehat\theta)\Res^J_{L_\varphi}(\delta))\\
		=&\Ind^L_{L_\varphi}(\widehat\varphi)\Res^J_L(\delta)
		=\Psi_\theta(\vartheta)\Res^J_L(\delta).
	\end{align*}
	Note that $G_\vartheta=(G_\theta)_{\widehat\theta}J$ and $H_\phi=(H_\varphi)_{\widehat\varphi}L$.
	Then by the proof of \cite[Thm.~3.14]{NS14} (or \cite[Prop.~2.8]{Ro22}), $(\Ind^{G_{\vartheta}}_{(G_\theta)_{\widehat\theta},\,J}(\widehat\cP)\,,\Ind^{H_\phi}_{(H_\varphi)_{\widehat\varphi},\,L}(\widehat\cP'))$ gives
	\[ (G_{\vartheta}, J, \vartheta) \succeq_c (H_\phi, L, \phi).\]
	
	Let $\theta$ runs through $\mathscr T$.
	Then we collect the $\Psi_\theta$'s and then obtain an \( H \)-equivariant bijection
	\[	\Psi :   \IRRK(J\mid\mathcal N)\to	\IRRK(L\mid\mathcal M)\]
	which satisfies the desired property.
	This completes the proof.
\end{proof}

\subsection{The McKay conjecture}
In \cite[Thm.~4.1]{Sp17}, Sp\"ath reformulates the inductive condition of McKay conjecture in terms of central isomorphisms between character triples.
In fact, this implies a self-reduction of McKay conjecture (cf. \cite[Thm.~B]{Ro23}).
This strengthened version of the McKay conjecture has recently been established.

For a finite group $H$, we let $\Irr_{\ell'}(H)$ denote the set of irreducible characters of $H$ of degree not divisible by $\ell$.

\begin{thm}[Cabanes--Sp\"ath {\cite[Thm.~B]{CS26}}]\label{thm-MC}
Let \( G \trianglelefteq A \) be finite groups, \( \ell \) a prime and \( P \) a Sylow \( \ell \)-subgroup of \( G \). Then there exists an \( \N_A(P) \)-equivariant bijection
\[\Omega :  \Irr_{\ell'}(\N_G(P)) \to \Irr_{\ell'}(G) \]	
such that 	\[(A_{\Omega(\chi)}, G, \Omega(\chi)) \geq_c (\N_A(P)_\chi, \N_G(P), \chi),	\]	for every \( \chi \in \operatorname{Irr}_{\ell'}(\N_G(P)) \).
\end{thm}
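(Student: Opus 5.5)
The plan is not to prove this from scratch but to assemble it from two ingredients: the verification of the inductive McKay condition for all finite non-abelian simple groups, and the reformulation and self-reduction of that condition in terms of central isomorphisms of character triples. The reformulation is \cite[Thm.~4.1]{Sp17} together with \cite[Thm.~B]{Ro23}.

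First, I would recall the reformulation in \cite[Thm.~4.1]{Sp17}. A simple group $S$ satisfies the inductive McKay condition at $\ell$ exactly when the following holds for its universal covering group $X$ and $A=X\rtimes\Aut(X)_{P}$-type overgroups. There is an $\Aut(X)_P$-equivariant bijection $\Irr_{\ell'}(X)\to\Irr_{\ell'}(\N_X(P)\,\text{or a suitable }M)$ whose corresponding character triples are related by $\succeq_c$ after embedding in $X\rtimes\Aut(X)$.

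Second, I would invoke the reduction machinery. Rossi's self-reduction \cite[Thm.~B]{Ro23} shows that if every non-abelian simple group involved in $G$ satisfies this condition, then the strong conclusion holds for an arbitrary pair $G\unlhd A$. That conclusion is the existence of an $\N_A(P)$-equivariant bijection $\Omega$ with $(A_{\Omega(\chi)},G,\Omega(\chi))\succeq_c(\N_A(P)_\chi,\N_G(P),\chi)$. The proof of that reduction is an induction on $|G:\Z(G)|$. It repeatedly uses Clifford-type transfers of $\succeq_c$ along normal subgroups, exactly as in Proposition~\ref{prop:Cli}, together with the transitivity and butterfly properties of central isomorphisms. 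I would therefore only need to check that its hypotheses are met.

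Third, I would supply those hypotheses by citing the verification of the inductive McKay condition for every simple group and every prime. Sporadic and alternating groups, groups of Lie type in defining characteristic, and exceptional covers are handled in earlier literature. Groups of Lie type in non-defining characteristic are handled via Malle's normalizers of Sylow $d$-tori, Jordan decomposition, and the $\tG$-/automorphism-equivariance results. The final remaining case, type $\ty D$, is treated by Cabanes--Sp\"ath.

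The main obstacle is this last input, namely equivariance and extendibility for type $\ty D$ (and $\ty E$). This requires establishing that the characters of $G$ and of the local subgroup have stabilizers with the required $A(\infty)$-type structure in $\tG\rtimes D$. That is the deep part of \cite{CS26}, and within this paper I would simply quote it. Everything else is a formal consequence of the self-reduction.
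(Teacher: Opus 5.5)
Your proposal is correct and follows the paper's route: the paper quotes Rossi's self-reduction \cite[Thm.~B]{Ro23} (built on \cite{IMN07}, with \cite[Thm.~4.1]{Sp17} supplying the central-isomorphism form) and combines it with the verification of the inductive McKay condition for all non-abelian simple groups in \cite{Ma08,Sp12,CS13,CS17a,CS17b,CS19,Sp23,Sp25,CS26}. The only imprecision is one of attribution: type $\ty E$ was settled in \cite{CS19}, and type $\ty D$ across \cite{Sp23,Sp25,CS26} rather than by \cite{CS26} alone.
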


\begin{proof}
	Based on \cite{IMN07}, Rossi \cite{Ro23} proved a self-reduction theorem of McKay conjecture, that is, this assertion holds if the inductive McKay condition from \cite[\S10]{IMN07} is satisfied for all finite non‑abelian simple groups.
	The inductive McKay condition has been verified in a series of papers   \cite{Ma08,Sp12,CS13,CS17a,CS17b,CS19,Sp23,Sp25,CS26}.
\end{proof}

\subsection{The inductive Sylow-AW condition}

The inductive condition of Conjecture~A can also be described in terms of central isomorphisms between character triples.

\begin{defn}[Malle--Navarro--Tiep {\cite[\S2]{MNT23}}]\label{defn-SAW}
	Let $\ell$ be a prime, $S$ a finite non-abelian simple group with $\ell$ dividing $|S|$, $G$ a universal $\ell'$-covering group of $S$. Let $P$ be a Sylow $\ell$-subgroup of $G$.
	We say that the \emph{inductive Sylow-AW condition holds for $S$ and $\ell$} if 
	there exists a proper subgroup $T$ of $G$ stabilised by \(\Aut(G)_{P}\) with $\N_G(P)\subseteq T$ and there exists an \(\Aut(G)_{P}\)-equivariant injection
	\[\Omega : \IBr(T) \embed \IBr(G)\] 
	satisfying that for any $\theta\in\IBr(T)$ and $\varphi=\Omega(\theta)$,
\begin{equation}\label{equ:ceniso}
(G \rtimes \Aut(G)_{\varphi}, G, \varphi) \succeq_{c} (T(G \rtimes \Aut(G))_{P,\, \theta}, T, \theta).
\addtocounter{thm}{1}\tag{\thethm}
\end{equation}	
\end{defn}	

We say that the \emph{inductive Sylow-AW condition holds for $S$} if it holds for $S$ and any prime $\ell\mid|S|$.

By \cite[Thm.~2]{MNT23}, if the inductive Sylow-AW condition is satisfied for all finite non-abelian simple groups and the prime $\ell$, then Conjecture~A holds for all finite groups for the prime $\ell$.
See also \cite[Thm.~2.13]{FMR26} for a self-reduction theorem.

Recall from \cite[p.1209]{MNT23} that the inductive Sylow-AW condition is implied by the inductive Alperin weight condition defined in \cite[\S3]{NT11}.

%%%%%%%%%%%%%%%%%%%%%%%%%%%%%%%%%%%%%%%%%%%%%%%%%%%%%%%%%%%%%%%%%%%%%%%%

\section{Exceptional covering groups}\label{sec:exceptional}

First we give a criterion for the inductive Sylow-AW condition.

\begin{thm}\label{thm:criterion-SylowAW}
Let $\ell$ be a prime, $S$ a finite non-abelian simple group, $G$ an $\ell'$-covering group of $S$. 
Let $P$ be a Sylow $\ell$-subgroup of $G$.
Assume that there exists a proper subgroup $T$ of $G$ stabilised by \(\Aut(G)_{P}\) with $\N_G(P)\subseteq T$ and the following statements are satisfied.
\begin{enumerate}[\rm(1).]
\item There exists an \(\Aut(G)_{P}\)-equivariant injection
\[\Omega : \IBr(T) \embed \IBr(G)\] 
satisfying that $\IBr(\Z(G)\mid \theta)=\IBr(\Z(G)\mid \varphi)$ for any $\theta\in\IBr(T)$ and $\varphi=\Omega(\theta)$.
\item 	For $\theta\in\IBr(T)$ and $\varphi=\Omega(\theta)$, there exists a finite group $A$ (depending only on $\varphi$) and Brauer characters $\widetilde\varphi\in\IBr(A)$ and $\widetilde\theta\in\IBr(\overline T\N_A(\overline P)_\theta)$ with the following properties.
Here we use the notation $Z=\Z(G)\cap\ker(\phi)$, $\overline G=G/Z$, $\overline P=PZ/Z$ and $\overline T=T/Z$.
\begin{enumerate}[\rm(i)]
	\item The group $A$ satisfies $\overline G\unlhd A$, $A/\C_A(\overline G)\cong \Aut(G)_\varphi$, $\C_A(\overline G)=\Z(A)$ and $\ell\nmid |\Z(A)|$.
	\item $\widetilde\varphi$ is an extension of $\overline \varphi$, and $\widetilde\theta$ is an extension of $\overline \theta$. Here $\overline \varphi\in\IBr(\overline G)$ and $\overline\theta\in\IBr(\overline T)$ are the deflations of $\varphi$ and $\theta$ respectively.
	\item $\IBr(\Z(A)\mid \widetilde\theta)=\IBr(\Z(A)\mid \widetilde\varphi)$.
\end{enumerate}	
\end{enumerate}
Then the inductive Sylow-AW condition holds for $S$ and $\ell$.
\end{thm}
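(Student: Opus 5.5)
The plan is to realise both projective representations needed in \eqref{equ:ceniso} through the single group $A$, so that both factor sets come from one $2$-cocycle with values in $\Z(A)$. The two factor sets will then agree on the smaller group, and so will the central scalars.

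First I reduce to the case $Z=1$. By (1), $\theta$ and $\varphi$ lie over the same characters of $\Z(G)$, so $Z=\Z(G)\cap\ker\varphi$ lies in the kernel of both. Projective representations associated with the deflated triples for $\overline G$ and $\overline T$ inflate to projective representations for the original triples, with inflated factor sets. The scalars on central elements are unchanged by inflation. Hence it is enough to produce $(\cP,\cP')$ for $\overline\varphi$ and $\overline\theta$ and check compatibility there.

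Fix $\theta\in\IBr(T)$, put $\varphi=\Omega(\theta)$, and let $A,\widetilde\varphi,\widetilde\theta$ be as in (2). By (i), every $a\in\Aut(G)_\varphi$ has a lift $\hat a\in A$ inducing $a$ on $\overline G$, unique up to $\Z(A)$. For $a\in\Aut(G)_{P,\theta}$ the lift normalises $\overline P$ and fixes $\theta$, so $\hat a\in \overline T\N_A(\overline P)_\theta$. Fix these lifts once and for all and define the section
\[ s\colon G\rtimes\Aut(G)_\varphi\to A,\qquad ga\mapsto \overline g\,\hat a. \]
Since $C_A(\overline G)=\Z(A)$, one gets $s(x)s(y)=s(xy)\,c(x,y)$ with $c(x,y)\in\Z(A)$. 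Let $\mathcal D$ afford $\widetilde\varphi$ and $\mathcal D'$ afford $\widetilde\theta$, and set
\[ \cP(x)=\mathcal D(s(x)) \quad\text{and}\quad \cP'(y)=\mathcal D'(s(y)) \]
for $y$ in $T(G\rtimes\Aut(G))_{P,\theta}$. Both restrict on $G$, respectively $T$, to honest representations affording $\varphi$ and $\theta$, and they satisfy the required $N$-bimodule compatibilities.

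Because $\ell\nmid|\Z(A)|$, the restrictions of $\widetilde\varphi$ and $\widetilde\theta$ to $\Z(A)$ are multiples of linear characters $\lambda$ and $\lambda'$, and (iii) gives $\lambda=\lambda'$. It follows that $\cP$ has factor set $\lambda\circ c$ and $\cP'$ has factor set $\lambda'\circ c$ restricted. These agree on $H\times H$, which is the relation $\succeq$.

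For $\succeq_c$ I check that the centraliser of $G$ lies in $T(G\rtimes\Aut(G))_{P,\theta}$ and that scalars agree there. An element centralising $G$ is sent by $s$ into $\C_A(\overline G)=\Z(A)$, up to the normalisation of the lifts. So $\cP$ and $\cP'$ act on it by $\lambda$ and $\lambda'$ of the same element of $\Z(A)$, and these scalars coincide. For the containment in $H$, I expect that in the relevant semidirect product the centraliser of $G$ reduces to $\Z(G)$ (times inner-automorphism corrections that are fixed by $P$ and $\theta$), and $\Z(G)\le T$.

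The main obstacle is the bookkeeping of these lifts. They must be chosen simultaneously for the big group and for $\N_A(\overline P)_\theta$, and they must make the centraliser elements map exactly into $\Z(A)$. Once that is arranged, the equivariance of $\Omega$ from (1) and the established relation \eqref{equ:ceniso} give Definition~\ref{defn-SAW}.
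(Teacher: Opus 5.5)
Your argument is correct. It is in substance a self-contained proof of the step the paper does not prove itself. The paper's entire proof is the citation \cite[Thm.~4.3]{Sp17}, which converts the ``group $A$'' formulation in (2) into the central isomorphism (\ref{equ:ceniso}) within Sp\"ath's general theory of character triples. In that framework one reads off a central isomorphism between $(A,\overline G,\overline\varphi)$ and $(\overline T\N_A(\overline P)_\theta,\overline T,\overline\theta)$ directly from linear representations affording $\widetilde\varphi$ and $\widetilde\theta$: the factor sets are trivial and the central scalars agree by (iii). One then transports this to $G\rtimes\Aut(G)_\varphi$, using that both groups induce the same automorphisms of $\overline G$, and finally inflates along $Z$. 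Your section $s$ does this transport by hand. The citation is shorter and fits a framework used uniformly elsewhere. Your version makes visible that both factor sets and all central scalars are $\lambda$ evaluated on the same $\Z(A)$-valued data, so that (iii) is exactly the hypothesis needed.

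In a write-up you should tighten four points.
\begin{enumerate}
\item Normalise $\hat 1=1$. Otherwise $\cP|_G$ is only a scalar twist of a representation affording $\varphi$.
\item Write $\iota_g$ for conjugation by $g$. A general element of $\mathcal H:=T(G\rtimes\Aut(G))_{P,\theta}$ has the form $t\,ga$ with $t\in T$ and $\iota_g a\in\Aut(G)_{P,\theta}$; it is not true that $a$ itself lies in $\Aut(G)_{P,\theta}$. The correct domain check is $s(tga)=\bar t\,(\bar g\hat a)$ with $\bar g\hat a\in\N_A(\overline P)_\theta$.
\item The centraliser $\C_{G\rtimes\Aut(G)_\varphi}(G)=\{g\,\iota_{g^{-1}}\mid g\in G\}$ is a diagonal copy of $G$, not essentially $\Z(G)$. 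It lies in $\mathcal H$ simply because its elements act trivially on $G$. Moreover $s(g\,\iota_{g^{-1}})=\bar g\,\widehat{\iota_{g^{-1}}}\in\Z(A)$ for every choice of lifts, so the bookkeeping you flag as the main obstacle is automatic.
\item Verify $\mathcal H\le G\rtimes\Aut(G)_\varphi$, $G\cap\mathcal H=T$ and $G\mathcal H=G\rtimes\Aut(G)_\varphi$. These use $\Aut(G)_{P,\theta}=\Aut(G)_{P,\varphi}$, which follows from equivariance plus injectivity of $\Omega$, together with a Frattini argument.
\end{enumerate}
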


\begin{proof}
By \cite[Thm.~4.3]{Sp17}, condition (2) implies (\ref{equ:ceniso}) in Definition~\ref{defn-SAW}.
\end{proof}

\begin{lem}\label{lem:S3-cri}
Let $\ell$ be a prime, $S$ a finite non-abelian simple group, $G$ an $\ell'$-covering group of $S$. 
Assume that the Sylow $t$-subgroups of $\Out(S)$ are cyclic for every prime $t$.
Let $P$ be a Sylow $\ell$-subgroup of $G$ and $T=\N_G(P)$.
Assume that there exists an \(\Aut(G)_{P,\,\nu}\)-equivariant injection
	\[\Omega : \IBr(T\mid\nu) \embed \IBr(G\mid\nu)\] 
	for any $\nu\in\Irr(\Z(G))$.
Then the inductive Sylow-AW condition holds for $S$ and $\ell$.
\end{lem}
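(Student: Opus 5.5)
We verify the hypotheses of Theorem~\ref{thm:criterion-SylowAW} with $T=\N_G(P)$, which is a proper subgroup of $G$ stabilised by $\Aut(G)_P$ (if $T=G$ then $P\unlhd G$, impossible for a perfect central extension of a simple group of order divisible by $\ell$). The plan has three steps: assemble a global injection from the $\nu$-wise injections, build the groups $A$ and the extensions $\widetilde\varphi,\widetilde\theta$ in condition~(2), and then check the central-character condition.

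\textbf{Step 1: the global injection.} Partition $\Irr(\Z(G))$ into $\Aut(G)_P$-orbits and choose representatives $\nu$. For each representative, take the given $\Aut(G)_{P,\nu}$-equivariant injection $\Omega_\nu\colon\IBr(T\mid\nu)\embed\IBr(G\mid\nu)$. Extend it to the whole orbit by transport: $\Omega(\theta^a)=\Omega_\nu(\theta)^a$ for $a\in\Aut(G)_P$. This is well defined because $\Omega_\nu$ is equivariant for the stabiliser of $\nu$. Since $\Z(G)$ is an $\ell'$-group, every $\theta\in\IBr(T)$ lies over a unique $\nu\in\Irr(\Z(G))=\IBr(\Z(G))$. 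The sets $\IBr(G\mid\nu)$ are disjoint for distinct $\nu$, so $\Omega$ is an $\Aut(G)_P$-equivariant injection. By construction $\IBr(\Z(G)\mid\theta)=\IBr(\Z(G)\mid\Omega(\theta))$, which is condition~(1).

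\textbf{Step 2: the groups $A$ and the extensions.} Fix $\theta$, $\varphi=\Omega(\theta)$ and $Z=\Z(G)\cap\ker\varphi$, and note $\Aut(G)_{P,\theta}\le\Aut(G)_\varphi$ by equivariance. Since $\Out(S)$ has cyclic Sylow subgroups, so does $\Out(G)_\varphi$ and every subgroup of it. We take $A$ to be a suitable central extension of $\overline G\rtimes\Aut(G)_\varphi$ modulo centraliser. The key input is the standard result (e.g. \cite[Thm.~8.29]{Na98} together with Clifford theory) that an invariant character extends to $X$ when $X/N$ has cyclic Sylow subgroups and $\ell'$-order obstruction. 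More precisely, the relevant cohomology class lies in $H^2(\Out(G)_\varphi,K^\times)$, whose $t$-parts inject into those of Sylow $t$-subgroups, and these vanish for cyclic groups. Hence $\overline\varphi$ extends to $\overline G\rtimes\Inn$-type groups, and similarly $\overline\theta$ extends to $\overline T\N_A(\overline P)_\theta$. The needed quotient $\overline T\N_A(\overline P)_\theta/\overline T\Z(A)$ embeds in $\Out(G)_{P,\theta}$, again with cyclic Sylow subgroups.

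Concretely, we take $A=\overline G\rtimes\Aut(G)_\varphi$ modulo identification of inner automorphisms with $\overline G/\Z(\overline G)$, namely the quotient by $\{(g,\iota_g^{-1})\}$. This gives $\C_A(\overline G)=\Z(A)=\Z(\overline G)$, which is cyclic (since $\varphi$ is faithful on $\Z(G)/Z$) and of $\ell'$-order, establishing (2)(i). The extensions in (2)(ii) come from the cyclic-Sylow extendibility argument, applied to the extension of $\overline\varphi$ from $\overline G$ to $A$ and of $\overline\theta$ from $\overline T$ to $\overline T\N_A(\overline P)_\theta$.

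\textbf{Step 3: the central condition, and the main obstacle.} Here $\Z(A)=\Z(\overline G)\le\overline T$, so both $\widetilde\varphi$ and $\widetilde\theta$ restrict on $\Z(A)$ to multiples of the character determined by $\nu$. This gives (2)(iii) automatically from Step~1. The main obstacle is Step~2: making rigorous that the obstruction vanishes when only the Sylow subgroups of the quotient, rather than the quotient itself, are cyclic. The case of Brauer characters needs the projective-representation version. I would first reduce to Sylow $t$-subgroups of $A/\overline G$ using the standard restriction argument in cohomology, in the form ``extends to $X$ iff extends to each $Q_t\overline G$'', \cite[Thm.~8.29]{Na98}, and then use that characters extend through cyclic quotients. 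Once the extensions exist, Theorem~\ref{thm:criterion-SylowAW} gives the conclusion.
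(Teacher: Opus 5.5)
Your Step~1 (gluing the $\nu$-wise injections along $\Aut(G)_P$-orbits to get condition~(1)) is correct; the paper leaves it implicit. Using cyclic Sylow subgroups of a subgroup of $\Out(S)$ together with \cite[Thm.~8.29]{Na98} to extend $\overline\theta$ is also the paper's key input.

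\textbf{The gap is the construction of $A$ in Step~2.} In $X=\overline G\rtimes\Aut(G)_\varphi$, the subgroup $D=\{(\bar g,\iota_g^{-1})\}$ contains $(z,1)$ for every $z\in\Z(\overline G)$, because $\iota_z=1$. Hence $D\cap\overline G=\Z(\overline G)$. In $A=X/D$ the image of $\overline G$ is therefore $\overline G/\Z(\overline G)\cong S$, and $A\cong\Aut(G)_\varphi$ has trivial centre. So your claim $\C_A(\overline G)=\Z(A)=\Z(\overline G)$ is false. In fact $\overline G$ does not embed in $A$, and $\overline\varphi$, which is faithful on $\Z(\overline G)$, cannot extend to it.

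This cannot be repaired by choosing a different quotient. $D$ centralises $\overline G$ and is isomorphic to the perfect group $\overline G$. So if $M\unlhd X$ makes the centraliser of $\overline GM/M$ central in $X/M$, then $D=[D,D]\le[D,X]\le M$, which forces $\Z(\overline G)\le M\cap\overline G$. Thus whenever $\varphi$ is nontrivial on $\Z(G)$, no quotient of the semidirect product satisfies (2)(i), and a genuinely non-split central extension is needed. When $\Z(G)\le\ker\varphi$ your construction does work; it is essentially the paper's $A=\Aut(\overline G)$ argument.

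\textbf{How the paper fixes this.} It realises $\overline\varphi$ by a faithful irreducible $k\overline G$-module $V$ and takes $A=\N_{\GL(V)}(\overline G)$. Then (2)(i) holds as in \cite[Thm.~C]{NT11}, $\Z(A)\cong k^\times$ is an $\ell'$-group, and $V$ itself gives $\widetilde\varphi$.

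\textbf{Consequence for Step~3.} Now $\Z(A)\not\le\overline T$, so condition (2)(iii) is no longer automatic. You must:
\begin{enumerate}
\item pick $\gamma\in\Irr(\Z(A)\mid\widetilde\varphi)$;
\item extend $\overline\theta$ to a character $\widehat\theta$ of the central product $\overline T\Z(A)$ over $\Z(\overline G)$ lying over $\gamma$ (possible precisely because $\theta$ and $\varphi$ lie over the same $\nu$; the paper cites \cite[Lemma~2.5(i)]{FS23});
\item extend $\widehat\theta$ to $\overline T\N_A(\overline P)_\theta$, using that the quotient by $\overline T\Z(A)$ embeds in $\Out(S)$ and \cite[Thm.~8.29]{Na98}.
\end{enumerate}
The cohomological point you flag as the main obstacle is the easy part. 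The real content is producing an $A$ with the right centre and matching the central characters of $\widetilde\theta$ and $\widetilde\varphi$.
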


\begin{proof}
It suffices to verify the condition (2) of Theorem~\ref{thm:criterion-SylowAW}.
Let $\theta\in\IBr(T)$ and $\varphi=\Omega(\theta)$.
We denote 
$Z=\Z(G)\cap\ker(\phi)$, $\overline G=G/Z$, $\overline P=PZ/Z$ and $\overline T=T/Z$.
Let $k$ be the splitting field of $x^{|\overline G|}-1$ over $\FF_\ell$.

Suppose that $\overline\varphi\ne 1_{\overline G}$. 
Then $\overline\varphi$ is afforded by a faithful irreducible representation $\overline G\to \GL(V)$ where $V$ is a vector space over $k$ of dimension $\varphi(1)$. 
Regard $\overline G$ as a subgroup of $\GL(V)$.
Put $A=\N_{\GL(V)}(\overline G)$.
As in the beginning of the proof of \cite[Thm.~C]{NT11}, one shows that condition (2.i) of Theorem~\ref{thm:criterion-SylowAW} holds.
In particular, $\C_A(\overline G)=\Z(A)\cong k^\ti$ is an $\ell'$-group.
Note that $V$ is also an irreducible $kA$-module, since its restriction to $\overline G$ is irreducible.
It gives an extension $\widetilde\varphi\in\IBr(A)$ of $\overline\varphi$.
Let $\gamma\in\Irr(\Z(A)\mid\widetilde\varphi)$.
Now $\overline T \Z(A)$ is a central product of $\overline T$ and $\Z(A)$ over $\overline T\cap\Z(A)=\Z(\overline G)$.
By \cite[Lemma~2.5(i)]{FS23}, $\overline \theta$ extends to $\widehat\theta\in\IBr(\overline T \Z(A))$ with $\gamma\in\Irr(\Z(A)\mid\widehat\theta)$.
By construction, $\widehat\theta$ is $\N_A(\overline P)$-invariant.
The group $\N_A(\overline P)/\overline T \Z(A)$ is isomorphic to a subgroup of $\Out(S)$, and hence its Sylow $t$-subgroups are cyclic for every prime $t$.
Therefore, $\widehat\theta$ extends to 
$\widetilde\theta\in\IBr(\N_A(P))$ by \cite[Thm.~(8.29)]{Na98}, and by construction, $\gamma\in\Irr(\Z(A)\mid\widetilde\theta)$. 

Now suppose that $\overline\varphi=1_{\overline G}$. 
Then take $A=\Aut(\overline G)$.
Thus $\C_A(\overline G)=\Z(A)=1$.
Hence it suffices to show that $\theta$ extends to $\N_A(P)$, which is implied by \cite[Thm.~(8.29)]{Na98}.
\end{proof}

\begin{lem}\label{lem:ele-lem}
Let $X$ be a finite group and $E\le\Z(X)$.	
Suppose that a group $A$ acts on $X$ such that $E$ is $A$-stable, and $A$ has a normal subgroup $D$ with $\gcd(|D|,|E|)=1$.	
Assume that $\C_E(D)=1$ and that $D$ acts trivially on $X/E$.	
Then \[X=E\ti\C_X(D),\] and $\C_X(D)\to X/E$ is an $A$-equivariant isomorphism.	
\end{lem}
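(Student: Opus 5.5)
The argument is elementary: a commutator map built from the action of $D$ will give the complement $\C_X(D)$. Since $D$ acts trivially on $X/E$, every $d\in D$ and $x\in X$ satisfy $x^{-1}x^d\in E$. Fix $x\in X$ and consider the map
\[ f_x\colon D\to E,\qquad d\mapsto x^{-1}x^d. \]
Because $E$ is central and $A$-stable (so $D$-stable), we have $x^{-1}x^{dd'}=(x^{-1}x^d)^{d'}\,x^{-1}x^{d'}$. Hence $f_x$ is a $1$-cocycle of $D$ with values in the $D$-module $E$.

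Since $\gcd(|D|,|E|)=1$, we have $H^1(D,E)=0$, so $f_x$ is a coboundary: there is $e\in E$ with $x^{-1}x^d=e^{-1}e^d$ for all $d\in D$. Explicitly, one can take $e$ to be the $|D|$-th root (in the abelian group $E$ of order prime to $|D|$) of $\prod_{d\in D} f_x(d)^{-1}$ suitably normalised; I will simply invoke the standard vanishing of $H^1$ for coprime orders. Then $xe^{-1}$ is $D$-fixed, since $(xe^{-1})^d=x^d(e^d)^{-1}=x\,x^{-1}x^d\,(e^{d})^{-1}=x e^{-1}$, using centrality of $E$. Therefore $X=E\,\C_X(D)$.

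Next I show the product is direct. We have $E\cap\C_X(D)=\C_E(D)=1$ by hypothesis. Both factors are normal: $E$ is central, and $\C_X(D)$ is normalised by $E\C_X(D)=X$. Hence $X=E\times\C_X(D)$, and the composite $\C_X(D)\hookrightarrow X\to X/E$ is an isomorphism.

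For $A$-equivariance, note that $D\unlhd A$ implies $\C_X(D)$ is $A$-stable: for $a\in A$, $x\in\C_X(D)$ and $d\in D$ we get $(x^a)^d=x^{a d a^{-1} a}=x^a$ because $ada^{-1}\in D$. Both the inclusion and the quotient map by the $A$-stable subgroup $E$ are $A$-equivariant, so the isomorphism is too. The only step with content is the coboundary step. If one prefers to avoid cohomology, an alternative is a direct averaging: since $E$ is central, $x\mapsto x^{-1}x^d$ is a homomorphism $X\to E$ for each $d$, and then $x^{|D|}$-type averaging over $D$ combined with $\gcd(|D|,|E|)=1$ gives the complement.
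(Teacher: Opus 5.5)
Your proof is correct and follows essentially the same route as the paper. Both arguments show that every ($D$-stable) $E$-coset contains a $D$-fixed point by a coprime-action argument, then use $\C_E(D)=1$ to get $E\cap\C_X(D)=1$ and hence $X=E\times\C_X(D)$, with $A$-equivariance coming from $D\unlhd A$. The one difference is that you obtain the fixed point from the vanishing of $H^1(D,E)$, whereas the paper cites the Glauberman-type fixed-point result \cite[Thm.~3.27]{Is08}. In this central setting that citation reduces to your cocycle computation, and your explicit choice of $e$ as the $|D|$-th root of $\prod_{d\in D}f_x(d)^{-1}$ already works with no further normalisation.
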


\begin{proof}
By hypothesis, every $E$-coset in $X$ is $D$-stable.	
Therefore, every $E$-coset in $X$ contains a fixed point of $D$ by \cite[Thm.~3.27]{Is08}.
Let $c\in \C_X(D)$.
Then for $e\in E$, $ce\in \C_X(D)$ if and only if $e\in \C_E(D)$.
It follows from $\C_E(D)=1$ that every $E$-coset in $X$ contains exactly one fixed point of $D$.
Therefore, $\C_X(D)\to X/E$, $c\mapsto cE$ is an isomorphism.	
Moreover, $X=	E\C_X(D)$ and $E\cap \C_X(D)=1$, so $X=E\ti\C_X(D)$.
The $A$-equivariance can be checked directly.	
\end{proof}

\begin{lem}\label{lem:S3-SylAW}
Let $\ell$ be odd, and let $G$ be the universal $\ell'$-covering
of a nonabelian simple group $S$.
Suppose that $\Z(G)$ is a 2-group, and 
$\Out(S)\cong S_3$ such that $\C_{\Z(G)}(\Out(S))=1$.
For $P\in\Syl_\ell(G)$, put
$\overline P=P\Z(G)/\Z(G)$.
If $\Aut(S)_{\overline P}$ acts trivially on $\N_S(\overline P)/\overline P$, then
$\N_S(\overline P)/\overline P$ is abelian and
\[\N_G(P)/P\cong \Z(G)\times \N_S(\overline P)/\overline P.\]
For every $\nu\in\Irr(\Z(G))$, the set $\IBr(\N_G(P)\mid\nu)$ has $|\N_S(\overline P)/\overline P|$ elements, all fixed by $\Aut(G)_{P,\nu}$.

If, moreover, $\IBr(G\mid\nu)$ contains at least $|\N_S(\overline P)/\overline P|$ elements fixed by $\Aut(G)_\nu$ for every $\nu\in\Irr(\Z(G))$, then the inductive Sylow-AW condition holds for $S$ and $\ell$.
\end{lem}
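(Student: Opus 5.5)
We apply Lemma~\ref{lem:ele-lem} to $X=\N_G(P)/P$ and $E=P\Z(G)/P\cong\Z(G)$; this $E$ is central in $X$ because $\Z(G)$ is central in $G$. The acting group is $A=\Aut(G)_P$, and for $D$ we take the preimage of the normal subgroup $C_3\unlhd S_3\cong\Out(S)$, intersected with $A$.

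\textbf{Choosing $D$.} Automorphisms of $G$ correspond to those of $S$, and by a Frattini argument $\Aut(S)_{\overline P}$ maps onto $\Out(S)$. So choose $\sigma\in\Aut(G)_P$ inducing an element of order $3$ in $\Out(S)$. Since $\ell\ne 3$ is not forced, we replace $\sigma$ by a suitable power so that $\sigma$ has order a power of $3$ modulo the inner automorphisms coming from $\N_G(P)$. The cleanest choice is to let $D$ be generated by the image of $\sigma$ acting on $X$, after passing to the $3$-part of its order. The action of $\Aut(G)_P$ on $X$ factors through $\Aut(G)_P/\mathrm{Inn}(\N_G(P))\cong \Out(S)$, because inner automorphisms from $\N_G(P)$ act on $X$ by conjugation. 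The obstacle here is precisely this: inner automorphisms by elements of $\N_G(P)$ do not act trivially on a possibly nonabelian $X$.

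\textbf{Establishing the isomorphism.} We therefore proceed in a different order. First, the hypothesis says $\Aut(S)_{\overline P}$ acts trivially on $\overline X=\N_S(\overline P)/\overline P$, and this includes the inner automorphisms coming from $\N_S(\overline P)$. Hence $\overline X$ is central in itself, i.e.\ abelian, which proves the first assertion. Now $X/E\cong\overline X$, using $\N_S(\overline P)=\N_G(P)\Z(G)/\Z(G)$ by coprimality. Let $D$ be a Sylow $3$-subgroup of the image of $\Aut(G)_P$ acting on $X$, or take $D$ as the subgroup generated by $3$-elements mapping into $C_3$, which is normal. Then $D$ acts trivially on $X/E$. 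Its order is coprime to $|E|$: $E$ is a $2$-group and $D$ is generated by $3$-elements, and we work modulo the part acting trivially. Finally $\C_E(D)=\C_{\Z(G)}(C_3)$, which is trivial because the $S_2$ part cannot be the one fixing nontrivial elements. More carefully, $\C_{\Z(G)}(\Out(S))=1$ together with the fact that an $S_3$-action on a $2$-group in which $C_3$ fixes a nonzero element forces a fixed vector of all of $S_3$ (the transposition acts on the nontrivial $C_3$-fixed points, which have even size minus one, so it has odd count and hence a fixed point). Lemma~\ref{lem:ele-lem} then gives $X=E\times\C_X(D)$ with $\C_X(D)\cong X/E\cong\overline X$ $A$-equivariantly.

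\textbf{Counting and the inductive condition.} Since $X$ is an abelian $\ell'$-group, the Brauer characters of $\N_G(P)$ are exactly $\Irr(X)$, and those over $\nu$ are $\nu\times\lambda$ with $\lambda\in\Irr(\overline X)$. This gives $|\overline X|$ of them. Each is fixed by $\Aut(G)_{P,\nu}$ because that group fixes $\nu$ and acts trivially on $\C_X(D)\cong\overline X$. For the final claim, choose any injection from $\IBr(\N_G(P)\mid\nu)$ into the $\Aut(G)_\nu$-fixed elements of $\IBr(G\mid\nu)$. It is automatically $\Aut(G)_{P,\nu}$-equivariant, since everything involved is fixed. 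The Sylow subgroups of $S_3$ are cyclic, so Lemma~\ref{lem:S3-cri} applies with $T=\N_G(P)$ and yields the inductive Sylow-AW condition.
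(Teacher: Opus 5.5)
Your route is the paper's: you get abelianness of $\N_S(\overline P)/\overline P$ from the inner part of the hypothesis, split $X=E\times\C_X(D)$ via Lemma~\ref{lem:ele-lem}, count $\IBr(\N_G(P)\mid\nu)$, and finish with Lemma~\ref{lem:S3-cri}. The paper compresses the middle step into a bare citation of Lemma~\ref{lem:ele-lem}, so the choice of $D$ is where the content lies. As written, your choice has a gap.

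\textbf{The gap.} You need $D$ normal in the acting group and of order prime to $|E|$. Normality is what makes $\C_X(D)$ stable under $\Aut(G)_P$, and that stability is exactly what makes every $\nu\times\lambda$ fixed by $\Aut(G)_{P,\nu}$. Each of your two candidates meets only one requirement:
\begin{enumerate}[\rm(a)]
\item A Sylow $3$-subgroup of the image $\bar A$ of $\Aut(G)_P$ in $\Aut(X)$ has odd order but is not known to be normal.
\item The subgroup generated by the $3$-elements is normal but need not be a $3$-group; for instance, the $3$-elements of $\fS_4$ generate $\fA_4$. Saying ``we work modulo the part acting trivially'' does not help, since $\bar A$ already acts faithfully.
\end{enumerate}
This is a real possibility a priori. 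Elements of $\bar A$ that are trivial on $E$ (and, by hypothesis, on $X/E$) have the form $x\mapsto xf(x)$ with $f\in\operatorname{Hom}(X/E,E)$. So $\bar A$ is an extension of a $2$-group $K$ by $S_3$, and $K$ contains the inner automorphisms of $X$ induced by $\N_G(P)$. That is precisely the obstacle you flagged and then left unresolved. A non-normal Sylow $D$ does give $X=E\times\C_X(D)$, but without stability of $\C_X(D)$ an $\alpha\in\Aut(G)_{P,\nu}$ could multiply each $\nu\times\lambda$ by the linear character $\nu\circ f_\alpha$ of $X/E$, where $f_\alpha(x)=x^{-1}\alpha^{-1}(x)$. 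Then the claim ``all fixed'' would fail.

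\textbf{The repair.} Use the two-stage argument your ``different order'' was heading towards.
\begin{enumerate}[\rm(1)]
\item Choose $\sigma\in\Aut(G)_P$ mapping to an element of order $3$ in $\Out(S)$. Replace it by a suitable power so that it acts on $X$ with $3$-power order.
\item Apply Lemma~\ref{lem:ele-lem} with $A=D$ equal to the cyclic $3$-group induced by $\sigma$; no normality issue arises. Using $\C_{\Z(G)}(C_3)=1$, which you proved correctly, you get $X=E\times\C_X(\sigma)$ with $\C_X(\sigma)\cong X/E$ abelian. Hence $X$ is abelian.
\item Now conjugation by $\N_G(P)$ is trivial on $X$. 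So the action of $\Aut(G)_P$ on $X$ factors through $\Aut(G)_P/\operatorname{Inn}(G)_P\cong\Out(S)\cong S_3$, and $\langle\sigma\rangle$ maps onto the image of the normal subgroup $C_3$.
\item Therefore $\C_X(\sigma)$ is $\Aut(G)_P$-stable. Since $\Aut(G)_P$ acts trivially on $X/E\cong\C_X(\sigma)$, it acts trivially on $\C_X(\sigma)$, which gives the fixedness of every $\nu\times\lambda$ under $\Aut(G)_{P,\nu}$.
\end{enumerate}
With this in place, your count and your final application of Lemma~\ref{lem:S3-cri} go through.
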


\begin{proof}
First note that $(\N_G(P)/P)/\Z(G)\cong \N_S(\overline P)/\overline P$.
If $\Aut(S)_{\overline P}$ acts trivially on $\N_S(\overline P)/\overline P$, then
$\N_S(\overline P)/\overline P$ is abelian.
By Lemma~\ref{lem:ele-lem}, $\N_G(P)/P\cong \Z(G)\times \N_S(\overline P)/\overline P$.
Thus, for every $\nu\in\Irr(\Z(G))$, the set $\IBr(\N_G(P)\mid\nu)$ has $|\N_S(\overline P)/\overline P|$ elements, all fixed by $\Aut(G)_{P,\nu}$.

Now assume that $\IBr(G\mid\nu)$ contains at least $|\N_S(\overline P)/\overline P|$ elements fixed by $\Aut(G)_\nu$ for every $\nu\in\Irr(\Z(G))$.
Then there exists an $\Aut(G)_{P,\nu}$-equivariant injection $\IBr(\N_G(P)\mid\nu)\embed\IBr(G\mid\nu)$.
Then the inductive Sylow-AW condition is implied by Lemma~\ref{lem:S3-cri}.	
\end{proof}	

\begin{cor}\label{cor:D4ex}
The inductive Sylow-AW condition holds for the simple group $\Omega^+_8(2)$.
\end{cor}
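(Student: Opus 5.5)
We will apply Lemma~\ref{lem:S3-SylAW} to $S=\Omega^+_8(2)=D_4(2)$. Recall that $\Out(S)\cong S_3$ (triality together with the graph automorphisms), and the Schur multiplier of $S$ is $2^2$. This is an exceptional multiplier, and $S_3$ permutes its three nontrivial elements transitively, so $\C_{\Z(G)}(\Out(S))=1$. The primes dividing $|S|=2^{12}\cdot3^5\cdot5^2\cdot7$ are $2,3,5,7$. For $\ell=2$ the condition is already known from \cite{MNT23}. So let $\ell\in\{3,5,7\}$, and let $G=2^2.S$ be the universal $\ell'$-covering group. Then $\Z(G)$ is a $2$-group, and Lemma~\ref{lem:S3-SylAW} applies once we check two things:
\begin{enumerate}[\rm(a)]
\item $\Aut(S)_{\overline P}$ acts trivially on $\N_S(\overline P)/\overline P$;
\item for every $\nu\in\Irr(\Z(G))$, the set $\IBr(G\mid\nu)$ contains at least $|\N_S(\overline P)/\overline P|$ elements fixed by $\Aut(G)_\nu$.
\end{enumerate}

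\textbf{Step (a).} We first compute the Sylow normalizers from the ATLAS and the known maximal subgroups of $\Omega^+_8(2)$ and $\Aut(S)=\Omega^+_8(2){:}S_3$.
\begin{itemize}
\item For $\ell=7$, $\overline P$ is cyclic of order $7$. We expect $\N_S(\overline P)/\overline P$ to be small: the normalizer of $7$ has order $42$ in $A_9\le S$, and possibly $21$ times a small group in $S$. The outer $S_3$ either fixes the class or acts through inner-type automorphisms, and one checks that the action on the quotient is trivial.
\item For $\ell=5$, $\overline P=5^2$ lies in $\Omega^-_4(2)\times\Omega^-_4(2)$-type subgroups. Here $\N_S(\overline P)/\overline P$ is a small group acting on $5^2$.
\item For $\ell=3$, $P$ has order $3^5$. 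We expect $\N_S(\overline P)/\overline P$ to be a small $2$-group, possibly elementary abelian of order $4$, and to be acted on by $\Aut(S)_{\overline P}$.
\end{itemize}
In each case the check is that the induced action on the $\ell'$-quotient is trivial. These are finite computations, which I would carry out with GAP/Magma or the character tables in the Modular Atlas, computing in $\Aut(S)$ directly. If triviality failed for some $\ell$, I would fall back on Lemma~\ref{lem:S3-cri}, since the Sylow subgroups of $S_3$ are cyclic, and produce the equivariant injection by hand.

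\textbf{Step (b).} We need Brauer character counts. For each $\nu\in\Irr(\Z(G))$, the stabilizer $\Aut(G)_\nu$ is all of $\Aut(G)$ if $\nu=1$, and maps onto a subgroup of order $2$ of $S_3$ otherwise. Since $S_3$ permutes the three nontrivial $\nu$ transitively, it suffices to treat $\nu=1$ and one faithful-on-a-quotient $\nu$. The latter case concerns the double cover $2.\Omega^+_8(2)$, whose faithful Brauer characters are given by the $\ell$-modular tables in the Modular Atlas (available in GAP's character table library for $\ell=3,5,7$).
\begin{itemize}
\item For $\nu=1$, we count Brauer characters of $S$ invariant under the full $S_3$. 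The trivial character alone, together with a few others such as $\mathrm{Steinberg}$-type characters, should suffice, since $|\N_S(\overline P)/\overline P|$ is small.
\item For the nontrivial $\nu$, we count the faithful Brauer characters of $2.S$ fixed by the relevant graph automorphism of order $2$.
\end{itemize}

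The main obstacle is Step (b) for $\ell=3$, where the number $|\N_S(\overline P)/\overline P|$ may be comparatively large. There, fusion in $3$-modular reduction could leave too few $\Aut(G)_\nu$-stable Brauer characters in the faithful blocks of $2.S$. I would try first to verify the count directly from the GAP tables together with their automorphism actions. If it fell short, I would instead use Theorem~\ref{thm:criterion-SylowAW} with a larger $T\supseteq\N_G(P)$.
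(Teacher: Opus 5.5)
Your plan breaks at $\ell=5$, where Step~(a) is false, so Lemma~\ref{lem:S3-SylAW} cannot be applied. Here $|\N_S(\overline P)|=400$, and $\N_S(\overline P)$ sits inside the maximal subgroup $(A_5\times A_5){:}2^2$ as $(D_{10}\times D_{10}).2^2$. The quotient $\N_S(\overline P)/\overline P$ is therefore a nonabelian group of order $16$ with ten irreducible characters; the involution interchanging the two factors already acts nontrivially on it. So even inner automorphisms fail to act trivially, and the hypothesis of Lemma~\ref{lem:S3-SylAW} fails (its conclusion would force the quotient to be abelian). Falling back on Lemma~\ref{lem:S3-cri} is exactly what the paper does. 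But there this is not a count of fixed points, and you supply none of the data it needs. On the local side $|\N_G(P)/P|=64$, and the orbit structure is:
\begin{itemize}
\item for $\nu\neq 1$, the image of $\Aut(G)_{P,\nu}$ in $\Out(G)$ has order $2$, and $\IBr(\N_G(P)\mid\nu)$ has six fixed points and two orbits of length $2$;
\item for $\nu=1$, $\IBr(\N_G(P)/\Z(G))$ has four fixed points and two orbits of length $3$ under $\fS_3$.
\end{itemize}
An equivariant injection then requires $\IBr(G\mid\nu)$, resp.\ $\IBr(S)$, to have at least as many orbits of each length. The paper reads this off from \cite{JLPW95}. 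Until you do this, the case $\ell=5$ is not proved.

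For $\ell=3$ your plan coincides with the paper's: $\N_S(\overline P)/\overline P\cong C_2^2$ with trivial $\Aut(S)_{\overline P}$-action. The step you flagged as the main obstacle is easy: each $\IBr(G\mid\nu)$ contains four Brauer characters distinguished by their degrees, hence fixed by $\Aut(G)_\nu$.

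For $\ell=7$ the Sylow $7$-subgroup is cyclic of order $7$, and the paper simply invokes \cite[Thm.~1.1]{KS16}. Your route does work for (a). Indeed $\C_S(\overline P)=\overline P$ and $\N_S(\overline P)=7{:}6$ induces all of $\Aut(\overline P)$, so $\Aut(S)_{\overline P}=\N_S(\overline P)\C_{\Aut(S)}(\overline P)$. Commutators of the second factor with $\N_S(\overline P)$ lie in $\C_S(\overline P)=\overline P$, so the action on $C_6$ is trivial. Your guess about this normalizer was inaccurate, however. More importantly, your route still obliges you to exhibit six $\Aut(G)_\nu$-fixed Brauer characters in every $\IBr(G\mid\nu)$, which the cyclic-defect result makes unnecessary.
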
	
	
\begin{proof}
Let $S=\Omega^+_8(2)$. Then the Schur multiplier of $S$ is ${C_2}^2$ and $\Out(S)\cong\fS_3$.

By\cite[Thm.~C]{NT11}, we assume that $\ell\ne 2$. 
Let $G$ be the $\ell'$-universal covering group of $S$, and $P$ a Sylow $\ell$-subgroup of $G$.
Then $|G|=2^{14}\cdot3^5\cdot5^2\cdot7$ and $\Z(G)\cong {C_2}^2$.
The action of $\Out(G)$ on $\Z(G)$ is the natural $\GL_2(2)$-action (\cite{CCNPW85}).
By \cite[Thm.~1.1]{KS16} we assume that $P$ is not cyclic.
Hence $\ell\in\{3,5\}$.

First we suppose that $\ell=3$.
Let $\overline P=P\Z(G)/\Z(G)\in\Syl_\ell(S)$.
The structure normalizer of Sylow 3-subgroups is known  (for example, see \cite[\S4]{Ma08}).
Precisely, direct calculation shows that $\N_S(\overline P)/\overline P\cong {C_2}^2$, and  $\Aut(S)_{\overline P}$ acts trivially on $\N_S(\overline P)/\overline P$ (this can also be checked by GAP \cite{GAP}).
We can check by \cite[p.233]{JLPW95} that for every $\nu\in\Irr(\Z(G))$, the set $\IBr(G\mid\nu)$ contains at least 4 Brauer charactes with distinct degrees.
Hence the inductive Sylow-AW condition holds for $S$ and 3 by Lemma~\ref{lem:S3-SylAW}.

Now let $\ell=5$.
Then $P$ is abelian.
According to Lemma~\ref{lem:S3-cri}, it suffices to show that for every $\nu\in\Irr(\Z(G))$, there exists an $\Aut(G)_{P,\nu}$-equivariant injection $\IBr(\N_{G}(P)\mid\nu)\embed\IBr(G\mid\nu)$.
Note that $\Out(G)$ acts transitively on $\Irr(\Z(G))\setminus\{1_{\Z(G)}\}$.
Using GAP \cite{GAP}, we can check that $\N_{G}(P)/P$ is of order 64.
Precisely, if $\nu\in\Irr(\Z(G))\setminus\{1_{\Z(G)}\}$, then the image of $\Aut(G)_{P,\nu}\to\Out(G)$ is of order 2, and
$\IBr(\N_{G}(P)\mid\nu)$ has six $\Aut(G)_{P,\nu}$-fixed points and two $\Aut(G)_{P,\nu}$-orbits of length 2.
And $\IBr(\N_{G}(P)/\Z(G))$ has four $\Aut(G)_P$-fixed points and two $\Aut(G)_P$-orbits of length 3.
Thus it suffices to show that $\IBr(G\mid \nu)$ and $\IBr(S)$ afford the same number of orbits of each length.
This can be obtained by \cite[p.233]{JLPW95}.
\end{proof}	

If a group $X$ acts on a set $\Omega$, then we denote by $\Omega^X$ the set of fixed points of $X$.

\begin{prop}\label{prop:2E6}	
The inductive Sylow-AW condition holds for the simple group ${}^2\ty E_6(2)$.
\end{prop}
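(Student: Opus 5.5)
The plan is to follow the same route as Corollary~\ref{cor:D4ex}. Recall the basic data for $S={}^2\ty E_6(2)$: the Schur multiplier is ${C_2}^2\times C_3$, $\Out(S)\cong\fS_3$, and $|S|=2^{36}\cdot3^9\cdot5^2\cdot7^2\cdot11\cdot13\cdot17\cdot19$. By \cite[Thm.~C]{NT11} we may assume $\ell\ne 2$. By \cite[Thm.~1.1]{KS16} we may also assume that the Sylow $\ell$-subgroups are non-cyclic. This leaves $\ell\in\{3,5,7\}$. For each such $\ell$, let $G$ be the universal $\ell'$-covering group of $S$. Then $\Z(G)\cong{C_2}^2\times C_3$ for $\ell\ne3$, and $\Z(G)\cong{C_2}^2$ for $\ell=3$. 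The $\fS_3$ acts on the $2$-part as $\GL_2(2)$, and the graph automorphism inverts the $3$-part.

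\textbf{Case $\ell=3$.} Here $\Z(G)$ is a $2$-group with $\C_{\Z(G)}(\Out(S))=1$, so Lemma~\ref{lem:S3-SylAW} applies directly. The first step is to compute $\N_S(\overline P)/\overline P$ for $\overline P\in\Syl_3(S)$, using the known Sylow normalizer structure (e.g.\ \cite[\S4]{Ma08}) or GAP \cite{GAP}, and to check that $\Aut(S)_{\overline P}$ acts trivially on this quotient. I expect the quotient to be a small elementary abelian $2$-group. The second step is to exhibit, for each $\nu\in\Irr(\Z(G))$, at least $|\N_S(\overline P)/\overline P|$ Brauer characters in $\IBr(G\mid\nu)$ fixed by $\Aut(G)_\nu$. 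The simplest way is to find that many distinct degrees among the $3$-modular Brauer characters lying over $\nu$, since a character whose degree is unique is fixed by $\Aut(G)_\nu$.

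\textbf{Cases $\ell\in\{5,7\}$.} Here the Sylow $\ell$-subgroup is abelian of order $\ell^2$, and $\Z(G)$ has order $12$. The centre is no longer a $2$-group, so I would use Lemma~\ref{lem:S3-cri} rather than Lemma~\ref{lem:S3-SylAW}; its hypothesis holds because $\Out(S)\cong\fS_3$ has cyclic Sylow subgroups. Fix $\nu\in\Irr(\Z(G))$, working up to the action of $\Out(G)$. Using GAP \cite{GAP}, compute $\N_G(P)/P$, its irreducible characters over $\nu$, and the orbit lengths of $\Aut(G)_{P,\nu}$ on them. Then compare with the $\Aut(G)_\nu$-orbit structure on $\IBr(G\mid\nu)$, and show that $\IBr(G\mid\nu)$ contains, for each orbit length, at least as many orbits as $\IBr(\N_G(P)\mid\nu)$ does. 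That comparison yields the required equivariant injection.

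\textbf{Main obstacle.} The hard step is obtaining enough information on $\IBr(G\mid\nu)$ for the covering groups, especially the faithful characters of $3.{}^2\ty E_6(2)$ and $6.{}^2\ty E_6(2)$, since their $\ell$-modular tables are not fully available. My first attempt would be to avoid complete tables. Every $\Aut(G)_\nu$-orbit has length at most $6$, and for $\ell=5,7$ the number $|\IBr(G\mid\nu)|$ equals the number of $\ell$-regular classes of $G$ lying over $\nu$, which can be read off the ordinary character table. I would then produce fixed or short-orbit Brauer characters by reducing ordinary characters of $\ell$-defect zero modulo $\ell$, again distinguishing them by degree. If that is still insufficient, I would use unitriangularity of decomposition matrices (or the known Harish-Chandra and unipotent-block structure) to locate enough $\Aut(G)_\nu$-invariant Brauer characters.
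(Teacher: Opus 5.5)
Your reduction to $\ell\in\{3,5,7\}$, with Lemma~\ref{lem:S3-SylAW} for $\ell=3$ and Lemma~\ref{lem:S3-cri} for $\ell\in\{5,7\}$, is exactly the paper's architecture. The gap is the step you flag as the main obstacle: you give no method that actually works for it.

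An equivariant injection preserves stabilisers. So for each $\nu$ you need the number of $\Out(G)_\nu$-orbits of each type on $\IBr(G\mid\nu)$, not just $|\IBr(G\mid\nu)|$. None of your three suggestions delivers this.
\begin{itemize}
\item Distinguishing Brauer characters by degree needs the $3$-, $5$- and $7$-modular tables of ${}^2\ty E_6(2)$ and its covers. Unlike for $\Omega^+_8(2)$ in Corollary~\ref{cor:D4ex}, these are not available. For $\ell=3$, with Sylow subgroups of order $3^9$, they are far out of reach.
\item Reductions of defect-zero characters control only part of $\IBr(G\mid\nu)$ and say nothing about the rest.
\item Unitriangularity results such as Theorem~\ref{prop:bas-set} need a finite reductive group and a good prime. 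So they cover neither $\ell=3$ (bad for $\ty E_6$) nor the faithful characters of the exceptional covers $2.S$ and $6.S$.
\end{itemize}
Those excluded cases are exactly where counts are needed. For example, for $\ell=3$ and $\nu\ne 1$ you need four $\Out(G)_\nu$-fixed elements among the $37$ faithful Brauer characters of $2.S$, and parity gives only one.

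The missing idea is that all these counts follow from numbers of $\ell$-regular classes, via Clifford theory and Brauer's permutation lemma \cite[Lemma~7.2]{GHKMW93}. Those class numbers can be read off ordinary character tables (ATLAS, GAP).
\begin{itemize}
\item If an involution $\sigma$ acts on $X$ with $f$ fixed irreducible Brauer characters, then $l(X.2)=2f+(l(X)-f)/2$, because $\ell$ is odd.
\item For $\ell=3$, $l(S)=68$ and $l(S.2)=97$ give $f=42$. Since $\Out(G)\cong\fS_3$ is generated by two involutions, at least $2\cdot 42-68=16$ elements of $\IBr(S)$ are $\Out(G)$-fixed.
\item Likewise $l(2.S)-l(S)=37$ and $l(2.S.2)-l(S.2)=62$ give $29$ fixed faithful Brauer characters of $2.S$.
\item For the outer automorphism $d$ of order $3$, you count $d$-invariant $\ell$-regular classes of $3.S$ and of $S$ directly.
\end{itemize}
This is how Table~\ref{table} is obtained. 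Your defect-zero idea is then needed only once: to produce six $\fS_3$-orbits of length $2$ for $\ell=5$, $\nu=1$.

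The local side has the same problem: a direct GAP computation of $\N_G(P)/P$ inside $6.{}^2\ty E_6(2)$ is unrealistic. The paper instead:
\begin{itemize}
\item takes the structure of $\N_S(\overline P)$ from \cite{Ma08,Wi18};
\item bounds $|\IBr(\N_G(P)\mid\nu)|\le|\Irr(\N_S(\overline P)/\overline P)|$ by Gallagher's theorem, which suffices when $o(\nu)\in\{3,6\}$;
\item for $o(\nu)\in\{1,2\}$, splits off $\textup{O}_2(\Z(G))$ equivariantly via Lemma~\ref{lem:ele-lem} and reads off the orbit lengths.
\end{itemize}
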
	
	
\begin{proof}
Let $S={}^2\ty E_6(2)$. Then $|S|=2^{36}\cdot3^9\cdot5^2\cdot7^2\cdot11 \cdot13 \cdot17\cdot 19$, the Schur multiplier of $S$ is ${C_2}^2\ti C_3$ and $\Out(S)\cong\fS_3$.
By\cite[Thm.~C]{NT11}, we assume that $\ell\ne 2$. 
If the Sylow $\ell$-subgroups of the universal covering group of $S$ are cyclic, then by \cite[Thm.~1.1]{KS16} the  inductive Sylow-AW condition holds.
Hence we assume that $\ell\in\{3,5,7\}$.
Let $G$ be the universal $\ell'$-covering group of $S$. 

We first determine global information as in the following Table~\ref{table}. 
\begin{table}[htbp]
	\centering
	\[\begin{array}{c|c|c|r|c|c|c}
		\ell&o(\nu)&\Out(G)_\nu&|\IBr(G\mid\nu)|&|\IBr(G\mid\nu)^{\Out(G)_\nu}|&b_2&b_3\\ \hline
		3&1&S_3&68&\geq16& & \\
		3&2&C_2&37&29&4&0\\ \hline
		5&1&S_3&115&\geq35&\geq6& \\
		5&2&C_2&65&53&6&0\\
		5&3&C_3&99&57&0&14\\
		5&6&1&49&49&0&0\\ \hline
		7&1&S_3&114&\geq38& & \\
		7&2&C_2&66&54&6&0\\
		7&3&C_3&99&60&0&13\\
		7&6&1&51&51&0&0
	\end{array}\]
	\caption{Action of $\Out(G)_\nu$ on $\IBr(G\mid\nu)$}  \label{table}
\end{table}
Here $b_r$ denotes the number of $\Out(G)_\nu$-orbits of length $r$ on $\IBr(G\mid\nu)$.
In Table~\ref{table2}, the number of $\ell$-regular classes are listed, by  $\mathbb{ATLAS}$ \cite[p.192--199]{CCNPW85} for the groups $S$, $S.2$, $2.S$ and $2.S.2$, and by the character table library of of GAP \cite{GAP} for the groups $3.S$ and $6.S$.
\begin{table}[htbp]
	\centering
\[\begin{array}{c|r|r|r|r|r|r}
	\ell &S&S.2&2.S&2.S.2&3.S&6.S\\ \hline
	3&68&97&105&159& & \\
	5&115&170&180&282&313&476\\
	7&114&171&180&285&312&480
\end{array}\]
	\caption{Number of $\ell$-regular classes}  \label{table2}
\end{table}
Let $d$ be a generator of $\textup O_3(\Out(G))\cong C_3$.
By GAP \cite{GAP}, the number of $d$-invariant $\ell$-conjugacy classes of $3.S$ is $187$ or $195$ according as $\ell=5$ or $7$.
By Brauer permutation lemma (\cite[Lemma 7.2]{GHKMW93}), $|\IBr(3.S)^{d}|=187$ or $195$ according as $\ell=5$ or $7$.

Let $\ell=3$.
Then $\Z(G)\cong {C_2}^2$ and the action of $\Out(G)$ on $\Z(G)$ is the natural $\GL_2(2)$-action.
First let $\nu=1_{\Z(G)}$.
Recall that we denote by $l(H)$ the number of conjugacy classes of $\ell$-regular elements of a finite group $H$.
Then $|\IBr(S)|=68$ and $|\IBr(S.2)|=97$.
It follows from Clifford theroy that $|\IBr(S)^{S.2}|=42$.
Since $\Out(G)$ is generated by two transpositions, we get that \[|\IBr(S)^{\Out(G)}|\ge 2|\IBr(S)^{S.2}|-|\IBr(S)|=16.\]
Now assume that $\nu\ne 1_{\Z(G)}$.
Let $Y=G/\ker(\nu)=2.S$.
Then $|\IBr(Y\mid\nu)|=37$ and $|\IBr(Y.2\mid\nu)|=62$.
So $|\IBr(Y\mid\nu)^{\Out(G)_\nu}|=29$.
Thus we get the two lines of Table~\ref{table} for the prime $\ell=3$.

The above arguments also apply for primes $\ell=5$ and $7$ for the case $o(\nu)=1$ or $2$, except when $\ell=5$ and $\nu=1_{\Z(G)}$ we need to show $b_2\ge 6$.
We mention that, under the ordinary character labels in the $\mathbb{ATLAS}$ \cite[p.192--196]{CCNPW85}, the 12 characters
$(\chi_{58},\chi_{59})$,
$(\chi_{95},\chi_{96})$,
$(\chi_{107},\chi_{108})$,
$(\chi_{109},\chi_{110})$,
$(\chi_{118},\chi_{119})$,
$(\chi_{123},\chi_{124})$ 
are all invariant under $\textup O_3(\Out(S))$, and 
the two characters in each pair are exchanged by $S.2$ (this can be also checked by GAP \cite{GAP}).
These 12 characters are of $5$-defect zero.
Hence $b_2\ge 6$.

Now assume that $\ell\in\{5,7\}$ and $o(\nu)\in\{3,6\}$.
If $o(\nu)=6$, then $\Out(G)_{\nu}=1$, and the numbers can be computed directly.
Now assume that $o(\nu)=3$.
By \cite[\S4.2]{Br16}, the number of the $\textup O_3(\Out(S))$-orbits of length 3 on the conjugacy classes of $S$ is $14$.
By Brauer permutation lemma, $|\IBr(S)^{\textup O_3(\Out(S))}|=73$ or $75$ according as  $\ell=5$ or $7$.
From this, we can obtain all the numbers in Table~\ref{table}.

Now we consider the local situation and establish the inductive Sylow-AW condition.
Let $P$ be a Sylow $\ell$-subgroup of $G$.
Write $\overline P=P\Z(G)/\Z(G)$.
Then \[|\IBr(\N_G(P)\mid \nu)|=|\Irr(\N_G(P)/P\mid \nu)|\le |\Irr(\N_S(\overline P)/\overline P)|,\]
where the inequality follows by a result of Gallagher \cite[Chap.~28, Thm.~1.3]{Ka92}.

First let $\ell=3$.
By \cite[p.~460]{Ma08}, the group $S$ has a subgroup $L\cong\Omega_7(3)$ such that $\N_S(\overline P)\le L$ and $\N_{S.2}(\overline P)\le L.2$.
Therefore, $\N_{S.2}(\overline P)/\overline P=\N_{L.2}(\overline P)/\overline P\cong {C_2}^3$ and $\N_{S}(\overline P)/\overline P\cong {C_2}^2$; in particular, $\N_{S.2}(\overline P)/\overline P$ acts trivially on $\N_S(\overline P)/\overline P$.
By the fact that $\Out(G)$ is generated by two transpositions again, $\Aut(S)_{\overline P}$ acts trivially on $\N_S(\overline P)/\overline P$.
Hence Lemma~\ref{lem:S3-SylAW} applies, and the inductive Sylow-AW condition holds for $S$ and the prime 3.

Now assume that $\ell=5$.
By \cite[\S4]{Ma08} and \cite[\S4]{Wi18}, $\C_S(\overline P)/\overline P\cong C_3$,
$\N_S(\overline P)/\overline P\cong (C_3\times 4.\fA_4)\rtimes C_2$, $\N_S(\overline P)/\C_S(\overline P)\cong G_8$, $\N_{S.2}(\overline P)/\overline P\cong S_3\ti G_8$, where $G_8$ is the reflection group in terms of Shephard–Todd labels \cite{ST54}.
Note that $\Irr(\N_S(\overline P)/\overline P)=30$ and recall that $\Irr(G_8)=16$, and consequently, $\Irr(\N_{S.2}(\overline P)/\overline P)=48$.
In particular, $|\IBr(\N_G(P)\mid \nu)|\le 30$ for every $\nu\in\Irr(\Z(G))$.

If $o(\nu)=6$, then $\Out(G)_\nu=1$.
Then $|\IBr(\N_G(P)\mid \nu)|\le|\IBr(G\mid \nu)|$ yields an  $\Out(G)_\nu$-equivariant injection from $\IBr(\N_G(P)\mid \nu)$ to $\IBr(G\mid \nu)$.
If $o(\nu)=3$, then $|\Out(G)_\nu|=3$.
The number of $\Out(G)_\nu$-fixed points on $\IBr(\N_G(P)\mid \nu)$ is bounded by 30, and the number of $\Out(G)_\nu$-orbits of length 3 on $\IBr(\N_G(P)\mid \nu)$ is bounded by 10.
Then by Table~\ref{table}, there exists an  $\Out(G)_\nu$-equivariant injection from $\IBr(\N_G(P)\mid \nu)$ to $\IBr(G\mid \nu)$.

Now assume that $o(\nu)=1$ or $2$.
Clifford theory implies that $|\Irr(\N_{S}(\overline P)/\overline P)^{S.2}|=22$.
By \cite[\S4]{Wi18}, $\Aut(S)_{\overline P}/\overline P\cong ((F\rtimes C_2)\rtimes 4.\fA_4)\rtimes C_2$, where $F=\textup{O}_3(\Aut(S)_{\overline P}/\overline P)\cong {C_3}^2$.
Here, $\C_S(\overline P)\unlhd \Aut(S)_{\overline P}$ and $F\cap (\N_S(\overline P)/\overline P)=\C_S(\overline P)/\overline P$.
We consider the action of $\Aut(S)_{\overline P}/\overline P$ on $F$, then $4.\fA_4$ is contained in the kernel.
Maschke's theorem implies that there exists a subgroup $K$ of $F$ such that $F=K\ti (\C_S(\overline P)/\overline P)$.
Then $K$ commutes with $\N_S(\overline P)/\overline P$, and the image of $K$ in $\Aut(S)_{\overline P}/\N_S(\overline P)\cong\fS_3$ has order 3.
Hence by Lemma~\ref{lem:ele-lem},
$\N_G(P)/P\textup{O}_3(\Z(G))=\textup{O}_2(\Z(G))\ti \C_{\N_G(P)/P\textup{O}_3(\Z(G))}(K)$.
Also there is an $\Aut(G)_{\overline P}/\overline P$-equivariant isomorphism between $\C_{\N_G(P)/P\textup{O}_3(\Z(G))}(K)$ and $\N_S(\overline P)/\overline P$.
Hence $\N_G(P)/P\textup{O}_3(\Z(G))\cong \textup{O}_2(\Z(G))\ti \N_S(\overline P)/\overline P$.
Therefore, there exists an $\Out(G)_\nu$-bijection $\IBr(\N_G(P)\mid\nu)\to \Irr(\N_S(\overline P)/\overline P)$.
As $K$ commutes with $\N_S(\overline P)/\overline P$, we can compute the orbit lengths of $\Out(G)_\nu$ on $\IBr(\N_G(P)\mid\nu)$ from the action of $\N_{S.2}(\overline P)/\overline P$ on $\N_S(\overline P)/\overline P$.
By direct calculation, $\IBr(\N_G(P)\mid\nu)$ has 22 fixed points and 4 orbits of length 2 under the action of $\Out(G)_\nu$.
From this, we get an $\Out(G)_\nu$-equivariant injection from $\IBr(\N_G(P)\mid \nu)$ to $\IBr(G\mid \nu)$, which implies the inductive Sylow-AW condition holds for $S$ and the prime 5.

Finally let $\ell=7$.
By \cite[\S4]{Wi18}, $\N_S(\overline P)/\overline P\cong\SL_2(3)\ti C_3$ and $\N_{\Aut(S)}(\overline P)\cong\fS_3\ti \N_S(\overline P)$.
In particular, $|\IBr(\N_G(P)\mid\nu)|\le 21$ for every $\nu\in\Irr(\Z(G))$.
If $o(\nu)=3$ or $6$, then similar to the above arguments, we can get an $\Out(G)_\nu$-equivariant injection from $\IBr(\N_G(P)\mid \nu)$ to $\IBr(G\mid \nu)$.
So we assume that $o(\nu)=1$ or $2$.
Note that $\N_S(\overline P)$ has no quotient group of order 2, hence its image in $\Out(S)\cong\fS_3$ is contained in $\fA_3$.
On the other hand, the image of $\N_{\Aut(S)}(\overline P)$ in $\Out(S)$ is the whole group, this implies that  the image of $\N_S(\overline P)$ in $\Out(S)$ is trivial.
Thus $\textup O_3(\Out(G))$ acts on $\N_G(P)/P\Z(G)$, and Lemma~\ref{lem:ele-lem} implies that $\N_G(P)/P\textup O_3(\Z(G))=\textup O_2(\Z(G))\ti \C_{\N_G(P)/P\textup O_3(\Z(G))}(\textup O_3(\Out(G)))$
and $\C_{\N_G(P)/P\textup O_3(\Z(G))}(\textup O_3(\Out(G)))$ can be identified with $\N_S(\overline P)/\overline P$ as above.
Moreover, $\textup O_3(\Out(G))$ acts trivially on $\N_S(\overline P)/\overline P$.
From this, we deduce that $\IBr(\N_G(P)\mid\nu)$ contains 21 Brauer characters,  all of which are fixed by $\Out(G)_\nu$.
Therefore, we get an $\Out(G)_\nu$-equivariant injection from $\IBr(\N_G(P)\mid \nu)$ to $\IBr(G\mid \nu)$, which implies the inductive Sylow-AW condition holds for $S$ and the prime 7.
\end{proof}	

\begin{thm}\label{thm:exc-covering}
Let $S$ be a simple group of Lie type that has an exceptional Schur multiplier.
Then the inductive Sylow-AW condition holds for $S$.
\end{thm}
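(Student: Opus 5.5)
The plan is to go through the finite list of simple groups of Lie type with exceptional Schur multiplier. For each, I either cite known results or reduce to the criteria above (Lemma~\ref{lem:S3-cri}, Lemma~\ref{lem:S3-SylAW}), or to the explicit verifications in Corollary~\ref{cor:D4ex} and Proposition~\ref{prop:2E6}. The list is
\[
\begin{aligned}
&\mathrm{L}_2(4),\ \mathrm{L}_2(9),\ \mathrm{L}_3(2),\ \mathrm{L}_3(4),\ \mathrm{U}_4(2),\ \mathrm{U}_4(3),\ \mathrm{U}_6(2),\ \mathrm{Sp}_6(2),\ \Omega_7(3),\\
&\Omega^+_8(2),\ \ty G_2(3),\ \ty G_2(4),\ \ty F_4(2),\ {}^2\ty B_2(8),\ {}^2\ty E_6(2).
\end{aligned}
\]

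First I dispose of the easy primes. For $\ell=2$ the inductive Sylow-AW condition is implied by the inductive Alperin weight condition for $\ell=2$, which is known (\cite[Thm.~C]{NT11}; alternatively \cite{MNT23} proves the Sylow-AW condition for $\ell=2$). Whenever the Sylow $\ell$-subgroups of the universal $\ell'$-covering group $G$ are cyclic, the condition holds by \cite[Thm.~1.1]{KS16}. Since the inductive Sylow-AW condition is implied by the inductive Alperin weight condition, I will also cite the existing verifications of the latter for small groups. Many of the groups above, such as $\mathrm{L}_2(4)\cong\mathrm{L}_2(5)$, $\mathrm{L}_2(9)\cong\fA_6$, $\mathrm{L}_3(2)\cong \mathrm{L}_2(7)$, $\mathrm{U}_4(2)\cong \mathrm{PSp}_4(3)$ and $\mathrm{Sp}_6(2)$, are covered by the papers on alternating groups, on groups of Lie type in defining characteristic and on sporadic-type computations listed in the introduction. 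I also expect \cite{AHL24} and related work to handle several of the remaining small groups for all primes.

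This leaves finitely many pairs $(S,\ell)$ with $\ell$ odd and non-cyclic Sylow $\ell$-subgroups. For these I apply Lemma~\ref{lem:S3-cri} whenever $\Out(S)$ has cyclic Sylow subgroups, which holds for all cases except $\Out(S)\cong\fS_3$ or $\Out(S)$ containing ${C_2}^2$ or a dihedral group, as for $\mathrm{L}_3(4)$ and $\mathrm{U}_4(3)$. This reduces the problem to constructing, for each $\nu\in\Irr(\Z(G))$, an $\Aut(G)_{P,\nu}$-equivariant injection $\IBr(\N_G(P)\mid\nu)\embed\IBr(G\mid\nu)$. I will build it by comparing orbit lengths. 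On the local side, $\N_G(P)/P$ and its automiser are read off from \cite{Ma08}, \cite{Wi18} or GAP, as in Proposition~\ref{prop:2E6}. On the global side, I use the Brauer permutation lemma together with the $\ell$-regular class counts of $S$, $S.2$, $2.S$, and so on, taken from \cite{CCNPW85,JLPW95,GAP}. The cases $\Omega^+_8(2)$ and ${}^2\ty E_6(2)$ have $\Out(S)\cong\fS_3$ and are exactly Corollary~\ref{cor:D4ex} and Proposition~\ref{prop:2E6}.

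The main obstacle is the groups whose $\Out(S)$ has non-cyclic Sylow subgroups and which are not of the $\fS_3$ type handled in Lemma~\ref{lem:S3-SylAW}: notably $\mathrm{L}_3(4)$ (with $\Out\cong C_2\ti\fS_3$ and a large center) and $\mathrm{U}_4(3)$ (with $\Out\cong D_8$), at $\ell=3$. There the extendibility step of Lemma~\ref{lem:S3-cri} fails. Instead I will verify condition (2) of Theorem~\ref{thm:criterion-SylowAW} directly. For this I take $A=\N_{\GL(V)}(\overline G)$ as in the proof of Lemma~\ref{lem:S3-cri}, and I need to check by hand or with GAP that each $\theta$ extends to $\overline T\N_A(\overline P)_\theta$ with the correct central character. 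Before attempting this, I would first try to choose $T$ larger than $\N_G(P)$, for instance a maximal $\ell$-local or parabolic subgroup stable under $\Aut(G)_P$, so that the extension problem becomes routine.
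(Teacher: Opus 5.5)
Your overall frame matches the paper's: the literature handles most of the list, and Corollary~\ref{cor:D4ex} and Proposition~\ref{prop:2E6} handle $\Omega^+_8(2)$ and ${}^2\ty E_6(2)$. But for the nontrivial remaining cases your proposal is a programme, not a proof. Consider $\mathrm{L}_3(4)$, $\mathrm{U}_6(2)$, $\ty G_2(4)$ and $\ty F_4(2)$ at odd non-defining primes with non-cyclic Sylow subgroups. For these you offer only three things: that you ``expect \cite{AHL24} and related work to handle several'' of them; orbit comparisons via Lemma~\ref{lem:S3-cri}; and a direct check of condition~(2) of Theorem~\ref{thm:criterion-SylowAW} with $A=\N_{\GL(V)}(\overline G)$ ``by hand or with GAP'', or a search for a larger $T$. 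None of this is carried out, so the cases you yourself call the main obstacle stay open.

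The missing step is the literature reduction the paper makes. The inductive blockwise Alperin weight condition implies the inductive Sylow-AW condition, and \cite{FLZ21} (types $\ty A$, ${}^2\ty A$), \cite{Sc16} (type $\ty G_2$) and \cite[\S8]{AHL24} ($\ty F_4(2)$) leave only $\Omega_7(3)$, $\Omega^+_8(2)$ and ${}^2\ty E_6(2)$. For $\Omega_7(3)$ nothing has to be computed. The Sylow $\ell$-subgroups of its universal covering group are cyclic for $\ell\ge 5$. The prime $\ell=3$ is defining (\cite[Thm.~C]{NT11}), and $\ell=2$ is \cite[Thm.~5]{MNT23}. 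The other two groups are exactly Corollary~\ref{cor:D4ex} and Proposition~\ref{prop:2E6}.

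Several of your auxiliary claims are also wrong, and they feed the misdiagnosis.
\begin{enumerate}
\item $\mathrm{U}_4(3)$ is not an obstacle at all. At $\ell=3$ it is in defining characteristic, and the exceptional part $3^2$ of its multiplier is a $3$-group. So its universal $3'$-covering group is $\mathrm{SU}_4(3)$ and \cite[Thm.~C]{NT11} applies, just as the paper uses it for $\Omega^+_8(2)$ at $\ell=2$. The prime $\ell=2$ is covered by \cite[Thm.~5]{MNT23}, and $\ell=5,7$ give cyclic Sylow subgroups.
\item $\fS_3$ has cyclic Sylow subgroups, so Lemma~\ref{lem:S3-cri} does apply when $\Out(S)\cong\fS_3$. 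This is exactly how Lemma~\ref{lem:S3-SylAW}, Corollary~\ref{cor:D4ex} and Proposition~\ref{prop:2E6} conclude.
\item \cite[Thm.~C]{NT11} is not a result about $\ell=2$, and the inductive Alperin weight condition is not known for all simple groups at $\ell=2$. What is available there is the inductive Sylow-AW condition from \cite[Thm.~5]{MNT23}.
\end{enumerate}
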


\begin{proof}
For the list of simple groups of Lie type with an exceptional Schur multiplier, we refer to \cite[Table~6.1.3]{GLS98}.
By \cite{Sc16},	\cite{FLZ21} and \cite[\S8]{AHL24}, we only need to consider the groups 
$\Omega_7(3)$, $\Omega^+_8(2)$ and $^2\ty E_6(2)$.
For the universal covering group of $\Omega_7(3)$, its Sylow $\ell$-subgroups are cyclic unless $\ell=2,3$, and thus the inductive Sylow-AW condition follows by \cite[Thm.~C]{NT11} and \cite[Thm.~5]{MNT23}.
Therefore, Corollary~\ref{cor:D4ex} and Proposition~\ref{prop:2E6}	complete the proof.	
\end{proof}

Therefore, by \cite[Thm.~C]{NT11}, \cite[Thm.~1.1]{AD12}, \cite[Thm.~1.1]{Ma14}, \cite[Thm.~5]{MNT23} and Theorem~\ref{thm:exc-covering}, the inductive Sylow-AW condition only remains open for simple group of Lie type without exceptional Schur multiplier and odd non-defining characteristic.

%%%%%%%%%%%%%%%%%%%%%%%%%%%%%%%%%%%%%%%%%%%%%%%%%%%%%%%%%%%%%%%%%%%%%%%%
\section{Groups of Lie type}\label{sec:lie}
	
In this section, we first recall some results on the representation theory of finite reductive groups, and then construct an injection from the irreducible characters of the normalizer of a Sylow $e$-torus to those of the finite reductive group.
	
\subsection{An equivariant injection for finite reductive groups with connected center}	
	
Let $\bG$ be a connected reductive group over $\overline{\mathbb F}_p$ for a prime~$p$
	and let $F\colon\bG\to \bG$ be a Frobenius endomorphism defining an
	$\FF_q$-structure on $\bG$, where~$q$ is a power of~$p$.
Let $\bG^*$ be Langlands dual to $\bG$ with corresponding Frobenius endomorphism also denoted by $F$.

We refer to \cite{GM20} for notation pertaining to the character theory of finite reductive groups.
For a semisimple element $s$ of ${\bG^*}^F$, we denote by $\cE(\bG^F,s)$ the Lusztig series corresponding to $s$.
Then by \cite[7.6]{Lu77}, there is a partitioning
\[
\Irr(\bG^F)=\coprod_s\cE(\bG^F,s)
\]
where $s$ runs through the ${\bG^*}^F$-conjugacy classes of the semisimple elements of ${\bG^*}^F$.
Recall that the characters in $\cE(\bG^F,1)$ are called unipotent characters of $\bG^F$.

If $\Z(\bG)$ is connected, then by \cite[4.23]{Lu84}, there is a bijection, called Jordan decomposition, 
\[\sJ^{\bG}_s\colon\cE(\C_{\bG^*}(s)^F,1)\to\cE(\bG^F,s)\]
where $\C_{\bG^*}(s)$ is a connected reductive group.
Moreover, we choose the Jordan decomposition $\sJ^{\bG}_s$ as in \cite[Thm.~7.1]{DM90} (see also \cite[Thm.~4.7.1]{GM20}) so that certain properties are satisfied.

For any parabolic subgroup \(\bP\) of \(\bG\) admitting a Levi decomposition with some \(F\)-stable Levi subgroup \(\bL\), we have the Deligne--Lusztig induction functor \[\R_{\bL \leq \bP}^\bG : \mathbb{Z} \Irr(\bL^F) \to \mathbb{Z} \Irr(\bG^F)\] and its adjoint \({}^*\R_{\bL \leq \bP}^\bG\) for the scalar product of characters (see \cite[\S3.3]{GM20}). When applied to unipotent characters, those functors \(\R_{\bL \leq \bP}^\bG\) and \({}^*\R_{\bL \leq \bP}^\bG\) are independent of the parabolic subgroup $\bP$ by \cite[1.33]{BMM93}, so we write simply \(\R_\bL^\bG\) and \({}^*\R_\bL^\bG\).

For a positive integer $e$, we denote by $\phi_e$ the $e$-th cyclotomic polynomial. We will make use of the terminology of Sylow $e$-theory, introduced in \cite{BM92} (see also \cite[\S25]{MT11}). For $\bT$ an $F$-stable torus, $\bT_{\phi_e}$ denotes its Sylow $e$-torus.

Let $E\subseteq \ZZ_{\ge 1}$. Recall that an \emph{$E$-torus} of $\bG$ is an $F$-stable torus whose polynomial order is a product of cyclotomic polynomials in $\{\phi_e\mid e\in E\}$ and an \emph{$E$-split Levi subgroup} of $\bG$ is the centralizer of an $E$-torus of $\bG$.
We say that an irreducible character $\chi\in\Irr(\bG^F)$ is \emph{$E$-cuspidal} if $^*\R^\bG_{\bL\subseteq \bP}(\chi)=0$ for all proper $E$-split Levi subgroups $\bL$ of $\bG$ and any parabolic subgroup $\bP$ of
$\bG$ containing $\bL$ as a Levi complement. 
If $\bL\le\bG$ is $E$-split and $\lambda\in\Irr(\bL^F)$ is $E$-cuspidal, then $(\bL,\lambda)$ is called an \emph{$E$-cuspidal pair of $\bG$}.
If $E=\{ e\}$, then we also say \emph{$e$-split} and \emph{$e$-cuspidal} for $E$-split and $E$-cuspidal respectively.

Let $e$ be a positive integer and $(\bL,\lambda)$ be a unipotent $e$-cuspidal pair of $\bG$. 
Denote by $\cE(\bG^F,(\bL,\lambda))$ the corresponding $e$-Harish-Chandra series, that is the set of irreducible components of $\R_{\bL}^{\bG}(\lambda)$.
Write $W_{\bG^F}(\bL,\lambda)$ for the relative Weyl group, that is, the stabiliser of $\lambda$ in $\N_{\bG^F}(\bL)/\bL^F$.
By a result of Brou\'e--Malle--Michel \cite[Thm.~3.2]{BMM93} there is a bijection
\[\fI_{\bL,\lambda}^{\bG}\colon \Irr(W_{\bG^F}(\bL,\lambda))\to \cE(\bG^F,(\bL,\lambda))\]
such that the Comparison Theorem holds.

Denote by \( \cE(\bG^F, \ell') \) the set of irreducible characters of \( \bG^F \) lying in a Lusztig series \( \cE(\bG^F, s) \), where \( s \in \bG^{*F} \) is a semisimple \( \ell' \)-element. 
By \cite{Ge93,GH91}, \( \cE(\bG^F, \ell') \) forms a basic set, provided that \( \ell \) is good for \( \bG \) and does not divide the defining characteristic of \( \bG \), and that \( \ell \) does not divide the order of \( (\Z(\bG)/\Z^\circ(\bG))_F \) (the largest quotient on which \( F \) acts trivially).

\begin{prop}\label{prop:Syl}
	Let \(\bH\) be simple, simply connected, defined over \(\mathbb{F}_q\) with corresponding Frobenius map \(F : \mathbf{H} \to \mathbf{H}\). Let \(\ell \) be an odd prime dividing \(|\bH^F|\), good for $\bH$, not dividing \(q|\Z(\bH)|\). 
	Let $e=e_\ell(q)$.
	Assume further that when $\ell = 3$, the group $\mathbf{H}^F$ is not one of the following:
	\begin{itemize}
		\item $\bH^F$ is of type $^3\ty D_4$,
		\item $\mathbf{H}^F = \mathrm{SL}_3(q)$ with $q \equiv 4, 7 \pmod{9} $,
		\item $\mathbf{H}^F = \mathrm{SU}_3(q)$ with $q \equiv 2, 5 \pmod{9} $,
		\item $ \mathbf{H}^F = \ty G_2(q) $ with $ q \equiv 2, 4, 5, 7 \pmod{9}$.
	\end{itemize}
	Let $P$ be a Sylow $\ell$-subgroup of $\bH^F$. 
	Then 
	\begin{enumerate}[\rm(i)]
		\item The group $P$ is Cabanes. 
		\item There is a unique Sylow $e$-torus $\bS$ of $\bH$ containing $\Cab(P)$. In particular, $\N_{\bH^F}(P)\subseteq \N_{\bH}(\bS)$.
		\item Let $\bL=\C_{\bH}(\bS)$. Then $\Cab(P)=\bS^F_\ell=\Z(\bL)^F_\ell=\bL^F\cap P$ is a normal Sylow $\ell$-subgroup of~$\bL^F$. In particular, $\bL^F=\Cab(P)\times\textup{O}_{\ell'}(\bL^F)$.
		\item One has $\N_{\bL^F}(P)=\Cab(P)\C_{\bL^F}(P)$.
		\item If $\ell$ is good for $\bH$ and does not divide $|\Z(\bH)^F|$, then $\bL^F=\C_{\bH^F}(\Cab(P))$ and $\N_{\bL^F}(P)=\Cab(P)\C_{\bH^F}(P)$.
		\item 
		One has	$\Irr(\bL^F\mid 1_{\Z(\bL)^F_\ell})=\cE(\bL^F,\ell')$. 
	\end{enumerate}
\end{prop}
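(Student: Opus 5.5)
Most of these assertions are standard facts from Sylow $e$-theory and Cabanes' work. The plan is to collect them from the literature, and to argue directly only where the structure of $\bL=\C_\bH(\bS)$ is needed. For (i)--(iii) I would invoke Cabanes \cite{Ca94} together with Malle's description of Sylow $\ell$-subgroups \cite[Thm.~5.14, 5.19]{Ma07} and \cite[\S25]{MT11}. Under the hypotheses ($\ell$ odd, good, $\ell\nmid q|\Z(\bH)|$, and the excluded $\ell=3$ cases, which are exactly where $P$ fails to have this shape), $P$ has a unique maximal abelian normal subgroup $\Cab(P)=\bS^F_\ell$, where $\bS$ is a Sylow $e$-torus. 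Moreover $P\le \N_{\bH^F}(\bS)$ and $P/\Cab(P)$ embeds in a Sylow $\ell$-subgroup of $W_{\bH^F}(\bL)$.

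For (ii), uniqueness follows because $\bS$ is recovered from $\Cab(P)$: the Sylow $e$-torus of $\C_\bH(\Cab(P))^\circ$ is central there, so any Sylow $e$-torus containing $\Cab(P)$ lies in $\C_\bH(\Cab(P))$ and is unique. Since $\N_{\bH^F}(P)$ normalises the characteristic subgroup $\Cab(P)$, it permutes the Sylow $e$-tori containing $\Cab(P)$, and uniqueness gives $\N_{\bH^F}(P)\subseteq\N_\bH(\bS)$.

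For (iii), $|\bL^F|_\ell=|\bS^F|_\ell$ because $\bL/\bS$ has polynomial order prime to $\phi_e$ and $\ell$ divides only $\phi_{e\ell^i}(q)$. These factors are absorbed by $\bS$ under the hypotheses; I would check this via \cite[Prop.~25.?]{MT11}, and this is where $\ell$ good and $\ell\nmid|\Z(\bH)|$ enter. Hence $\bS^F_\ell\le\Z(\bL)^F_\ell$ is a central normal Sylow $\ell$-subgroup of $\bL^F$, forcing $\bS^F_\ell=\Z(\bL)^F_\ell=\bL^F\cap P$. Schur--Zassenhaus then gives $\bL^F=\Cab(P)\times \textup O_{\ell'}(\bL^F)$.

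For (iv), $\N_{\bL^F}(P)=\Cab(P)\times \N_{\textup O_{\ell'}(\bL^F)}(P)$. Any $\ell'$-element $x$ normalising $P$ satisfies $[x,P]\le P\cap\textup O_{\ell'}(\bL^F)=1$, since $\textup O_{\ell'}(\bL^F)\unlhd\N(\bS)^F$. So $x$ centralises $P$.

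For (v), the inclusion $\bL^F\subseteq\C_{\bH^F}(\Cab(P))$ is clear. For the reverse, I would use that $\C_\bH(\Cab(P))$ is connected and equals $\bL$ for $\ell$ good not dividing $|\Z(\bH)^F|$. This is \cite[Prop.~13.16]{CE04}-type results (centralisers of $\ell$-subgroups of the torus are Levi subgroups, with connectedness from Steinberg/$\ell\nmid|\Z/\Z^\circ|$). Combining this with (iv) and $\C_{\bH^F}(P)\le\C_{\bH^F}(\Cab(P))=\bL^F$ gives $\N_{\bL^F}(P)=\Cab(P)\C_{\bH^F}(P)$.

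For (vi), write $Z=\Z(\bL)^F_\ell$. By (iii), $\bL^F=Z\times \textup O_{\ell'}(\bL^F)$, so $\Irr(\bL^F\mid 1_Z)$ consists of characters trivial on $Z$. Characters of $\bL^F$ in $\cE(\bL^F,s)$ with $s=s_\ell s_{\ell'}$ restrict to $Z$ via the linear character of $\Z(\bL)^F$ attached to $s$ through duality: $\cE(\bL^F,s)=\hat z\otimes\cE(\bL^F,s')$ for central $z\in\Z(\bL^*)^{F}$. Since $\bL^{*F}$ also has a central Sylow $\ell$-subgroup dual to $Z$, every $\ell$-element $s_\ell$ is central and corresponds to a linear character of $\bL^F$ of $\ell$-power order. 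That character is nontrivial on $Z$ iff $s_\ell\ne1$, giving the equality.

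The main obstacle is (iii) and (v): the precise statement that $\bL^F$ has central Sylow $\ell$-subgroup and $\C_\bH(\Cab(P))=\bL$. I would handle this by citing Cabanes--Enguehard's results on $e$-split Levi subgroups and $\ell$-elements, checking carefully that the excluded $\ell=3$ cases are exactly the exceptions.
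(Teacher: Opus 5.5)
Your treatment of (i), (ii), (iv) and (v) matches the paper. Parts (i)--(ii) come from Cabanes' theorem and the proof of \cite[Thm.~5.14]{Ma07}. Your commutator argument $[x,P]\subseteq P\cap\textup{O}_{\ell'}(\bL^F)=1$ for (iv) is the paper's argument. Part (v) is \cite[Prop.~2.2]{CE94}; only the finite-level equality $\bL^F=\C_{\bH^F}(\Cab(P))$ is needed there, and connectedness of $\C_{\bH}(\Cab(P))$ does not follow from Steinberg's theorem alone, since that concerns single elements.

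The genuine gap is (iii): your order count does not close. $\bS$ is a $\phi_e$-torus, so it contributes no factor $\phi_{e\ell^i}$ with $i\ge 1$ and cannot ``absorb'' anything. What you actually need is that the polynomial order of $\bL$ contains no such factor, i.e.\ $|\bL^F|_\ell=|\bS^F|_\ell$. That is precisely the content of (iii), and you defer it to a placeholder reference.

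The missing idea is that no order computation is needed: (iii) follows formally from (i)--(ii) and the self-centralising property of $\Cab(P)$.
\begin{enumerate}[\rm(a)]
\item Since $P\le\N_{\bH}(\bS)$ normalises $\bS^F$, the group $\bS^F\cap P$ is a Sylow $\ell$-subgroup of the abelian group $\bS^F$. It is also an abelian normal subgroup of $P$ containing $\Cab(P)$, so $\Cab(P)=\bS^F_\ell$.
\item Likewise $P$ normalises $\bL^F$, so $\bL^F\cap P$ is a Sylow $\ell$-subgroup of $\bL^F$. Hence it contains the central $\ell$-subgroup $\Z(\bL)^F_\ell$.
\item $\bL^F\cap P$ centralises $\bS\supseteq\Cab(P)$. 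Moreover $\C_P(\Cab(P))=\Cab(P)$ by Lemma~\ref{lem:self-cen}(ii), because $P/\Cab(P)$ is an $\ell$-group and hence supersolvable.
\end{enumerate}
Together these give
\[\Cab(P)=\bS^F_\ell\subseteq\Z(\bL)^F_\ell\subseteq\bL^F\cap P\subseteq\C_P(\Cab(P))=\Cab(P).\]
So all these groups coincide and form a central Sylow $\ell$-subgroup of $\bL^F$, and your Schur--Zassenhaus step finishes (iii). In particular $|\bL^F|_\ell=|\bS^F|_\ell$ is an output of the argument rather than an input, and (iii) uses the hypotheses on $\ell$ only through (i)--(ii). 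With (iii) established this way, your steps for (iv) and (vi), which rely on it, then go through.
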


\begin{proof}
	The assertions (i) and (ii) are proved in the proof of \cite[Thm.~5.14]{Ma07}, based on \cite[Thm.~4.4]{Ca94} and \cite[Prop.~5.13]{Ma07}. 
	Now we consider (iii). Write $A=\Cab(P)$.
	It follows from $P\subseteq \N_{\bH}(\bS)$ that $\bS^F\cap P$ is a Sylow $\ell$-subgroup of $\bS^F$ and an abelian normal subgroup of $P$.
	Hence $A=\bS^F_\ell$.
	Since $\bL^F\cap P$ is a Sylow $\ell$-subgroup of $\bL^F$, we have $\Z(\bL)^F_\ell\subseteq \bL^F\cap P\subseteq \C_P(A)$. 
	By Lemma~\ref{lem:self-cen}, $\C_P(A)=A$.
	Then it follows from $\bS\subseteq \Z(\bL)$ that 
	$A=\bS^F_\ell=\Z(\bL)^F_\ell=\bL^F\cap P$.
	By the Schur--Zassenhaus Theorem, $A$ has a complement in $\bL^F$. 
	As $A$ is a central subgroup of $\bL^F$, we have $\bL^F=A\times\textup{O}_{\ell'}(\bL^F)$.
	This gives (iii).
	
	For (iv), we note that $\bL^FP=P\textup{O}_{\ell'}(\bL^F)$, so
	$\N_{\bL^FP}(P)=P\N_{\textup{O}_{\ell'}(\bL^F)}(P)=P\times\N_{\textup{O}_{\ell'}(\bL^F)}(P)$.
	Therefore, $\N_{\textup{O}_{\ell'}(\bL^F)}(P)=\C_{\textup{O}_{\ell'}(\bL^F)}(P)$ and $\N_{\bH^F}(P)\cap\bL^F=A\times\C_{\textup{O}_{\ell'}(\bL^F)}(P)=A\C_{\bL^F}(P)$.
	The statement (v) follows from \cite[Prop.~2.2]{CE94},
	and (vi) follows by construction.
\end{proof}
	
\begin{cor}\label{cor:syl-Levi-syl}
	Let \(\bH\) be simple, simply connected, defined over \(\mathbb{F}_q\) with corresponding Frobenius map \(F : \mathbf{H} \to \mathbf{H}\). Let \(\ell \) be an odd prime dividing \(|\bH^F|\), good for $\bH$, not dividing \(q|\Z(\bH)|\). 
Let $e=e_\ell(q)$.
Let $\bL$ be the centralizer of some Sylow $e$-torus of $\bH$.	
	Then $\Z(\bL)^F_\ell$ is a normal Sylow $\ell$-subgroup of $\bL^F$.
\end{cor}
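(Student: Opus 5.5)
The plan is to reduce to Proposition~\ref{prop:Syl}(iii) by conjugacy. By the Sylow theory for $e$-tori of Broué--Malle, all Sylow $e$-tori of $\bH$ are $\bH^F$-conjugate. Hence $\bL$ is $\bH^F$-conjugate to $\C_{\bH}(\bS_0)$ for any fixed Sylow $e$-torus $\bS_0$, and the claim is invariant under $\bH^F$-conjugation. It therefore suffices to exhibit one Sylow $e$-torus $\bS_0$ whose centralizer $\bL_0$ has $\Z(\bL_0)^F_\ell$ as a normal Sylow $\ell$-subgroup of $\bL_0^F$.

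When the hypotheses of Proposition~\ref{prop:Syl} hold, take $P\in\Syl_\ell(\bH^F)$ and let $\bS_0$ be the unique Sylow $e$-torus containing $\Cab(P)$ given by part~(ii). Part~(iii) then says directly that $\Z(\bL_0)^F_\ell=\bL_0^F\cap P$ is a normal Sylow $\ell$-subgroup of $\bL_0^F$. Note that $\bL_0^F\cap P$ is Sylow in $\bL_0^F$ because $P\subseteq\N_{\bH}(\bS_0)$, so $P$ normalizes $\bL_0^F$; this is already incorporated in (iii).

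The remaining work is the exceptional list for $\ell=3$ ($^3\ty D_4$, $\SL_3(q)$, $\SU_3(q)$, $\ty G_2(q)$ for certain $q$), where $P$ may fail to be Cabanes. Here I would argue directly in the corollary's setting. If $\ell$ is good for $\bH$ and $\ell\nmid|\Z(\bH)|$, then $\bL=\C_{\bH}(\bS)$ with $\bS$ a Sylow $e$-torus, and $\bL^F$ has order $|\bS^F|$ times a product of cyclotomic values $\phi_d(q)$ with $d\ne e$ (Sylow $e$-theory: $\bS$ is a Sylow $e$-torus of $\bL$ and $\bL/\bS$ has no $\phi_e$ part).

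For goodness of $\ell$ I would use the standard criterion that $\ell\mid\phi_d(q)$ forces $d=e\ell^k$. For $k\ge1$, the factor $\phi_{e\ell^k}$ does not occur in the order polynomial of $[\bL,\bL]$, because its Weyl-group degrees, bounded in terms of the $e$-split structure, are not divisible by $e\ell$. This is exactly where goodness enters. Hence $|\bL^F|_\ell=|\Z^\circ(\bL)^F|_\ell$. The centre $\Z(\bL)$ is connected up to $\ell'$ components since $\ell\nmid|\Z(\bH)|$, so $\Z(\bL)^F_\ell$ is a Sylow $\ell$-subgroup of $\bL^F$, and it is central, hence normal.

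Because the direct order argument handles all cases uniformly, I would actually present it as the main proof, with the reference to Proposition~\ref{prop:Syl}(iii) as a shortcut for the generic case. The main obstacle is the claim that no $\phi_{e\ell^k}$ with $k\ge1$ divides the polynomial order of $[\bL,\bL]$. I would check this via the known structure of minimal $e$-split Levi subgroups, whose derived subgroups are products of groups of small rank, case by case over the types where $\ell$ is good. I expect this to be routine but tedious, especially for $\ell=3$ in the listed small exceptions.
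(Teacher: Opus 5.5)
Your generic case is the same as the paper's. Where its hypotheses hold, Proposition~\ref{prop:Syl}(iii) gives the claim for the Sylow $e$-torus containing $\Cab(P)$. Your explicit appeal to the $\bH^F$-conjugacy of Sylow $e$-tori correctly transfers it to an arbitrary Sylow $e$-torus; the paper leaves that step implicit.

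The gap is in the remaining cases. You yourself call the claim that no $\phi_{e\ell^k}$ with $k\ge1$ divides the order polynomial of $[\bL,\bL]$ the main obstacle, and it is never established. The justification you offer (``Weyl-group degrees, bounded in terms of the $e$-split structure, are not divisible by $e\ell$'') is not an argument. It is also not the right criterion once twisted factors $x^d-\varepsilon$ occur: for example, $\phi_{2d}$ divides $x^d+1$ although $2d\nmid d$. Making this uniform order computation your ``main proof'' would rest everything on an unverified claim. For $\ell\ge5$ that claim is exactly the nontrivial content of Proposition~\ref{prop:Syl}(iii), which rests on \cite{Ca94} and \cite{Ma07}, and is not routine.

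What you are missing is an elementary observation. The exceptional list only concerns $\ell=3$, and then $e=e_3(q)\in\{1,2\}$. The centralizer of a Sylow $1$- or $2$-torus is a maximal torus by \cite[Lemma~3.17]{KM15}. So $\bL=\Z(\bL)$, $\bL^F$ is abelian, and $\Z(\bL)^F_\ell$ is trivially its normal Sylow $\ell$-subgroup. This is how the paper argues: $\ell=3$ by this torus observation, and $\ell\ge5$ by Proposition~\ref{prop:Syl}.

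Note also that three of the four listed exceptions are already excluded by the corollary's hypotheses. For $\SL_3$ and $\mathrm{SU}_3$ one has $3=|\Z(\bH)|$, and $3$ is bad for $\ty G_2$. Only $^3\ty D_4$ with $\ell=3$ actually needs treatment, and there the torus argument applies, so no case-by-case analysis is needed.
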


\begin{proof}
	If $\ell=2$ or $3$, then $e\in\{1,2\}$, and thus $\bL$ is a torus by \cite[Lemma~3.17]{KM15}.
	Hence we assume $\ell\ge 5$.
	Then this assertion follows from Proposition~\ref{prop:Syl}.
\end{proof}

We propose the follow question:

\begin{question}
	Does Corollary~\ref{cor:syl-Levi-syl} hold for an arbitrary connected reductive group $\bH$?	
\end{question}

\subsection{An equivariant injection}\label{subsec:irr-nor-tori}
	
	Now let $\bG$ be a simple algebraic group of simply connected type over $\overline\FF_p$. 
	Let $\Phi$ and $\Delta$ denote respectively the set of
	roots and simple roots of $\bG$ determined by the choice of a maximal torus and
	a Borel subgroup containing it. To describe Frobenius endomorphisms of~$\bG$,
	we use the Chevalley generators $x_\alpha(t)$
	($t\in\overline\FF_q$, $\alpha\in\Phi$) as in~\cite[Thm.~1.12.1]{GLS98}.
	
	Recall the endomorphisms of $\bG$ described as in~\cite[\S2]{MS16}.
	Let $F_0:\bG\to\bG$ denote the field endomorphism of $\bG$ given by
	$F_0(x_\alpha(t))=x_\alpha(t^p)$ for $t\in\overline\FF_p$ and $\alpha\in\Phi$.
	A (length-preserving) automorphism $\tau$ of the Dynkin diagram associated to
	$\Delta$ (and hence an automorphism of $\Phi$) determines a graph automorphism
	$\gamma$ of $\bG$ given by $\gamma(x_\alpha(t)):=x_{\tau(\alpha)}(t)$ for
	$t\in\overline\FF_p$ and $\alpha\in \pm\Delta$. Any such $\gamma$ commutes with $F_0$.
	
	Suppose that $\Z(\bG)$ has rank $r$ as a finite abelian group.
	Let $\mathbb Z$ be a torus of rank~$r$ with an embedding of
	$\Z(\bG)$. Let us set $\tbG:=\bG\times_{\Z(\bG)}\mathbb Z$ the central product of
	$\bG$ and $\mathbb Z$ over $\Z(\bG)$. Then $\tbG$ is a connected reductive group such
	that the natural map $\bG\hookrightarrow\tbG$ is a regular embedding (cf. \cite[\S7]{Lu88}).
	As in \cite[p.~874]{MS16}, we can extend $F_0$ to a Frobenius endomorphism of
	$\tbG$ and $\gamma$ to an automorphism of $\tbG$.
	
	Consider a Frobenius endomorphism $F:=F_0^f\gamma$, with $f$ a positive integer
	and $\gamma$ a (possibly trivial) graph automorphism of $\bG$, leaving aside the types $^2{\ty{B}}_2$, $^2{\ty{G}}_2$ and $^2{\ty{F}}_4$.
	Then $F$ defines an $\FF_q$-structure on $\tbG$, where $q=p^f$.
	The groups of rational points $G=\bG^F$ and $\tG=\widetilde\bG^F$ are
	finite. 
	Let $\cB$ be the subgroup of $\Aut(\bG^F)$ generated by $F_0$ (here we identify
	$F_0$ with $F_0|_G$) and the graph automorphisms commuting with $F$.
	Then $\tbG^F\rtimes \cB$ is well defined and induces all automorphisms of
	$\bG^F$ (see~\cite[Thm.~2.5.1]{GLS98}). 
	
Let $e=e_\ell(q)$ and $\bS$ be a Sylow $\phi_e$-torus of $\bG$. Let $\tbS=\Z(\tbG)_{\phi_e}\bS$. Then  $\tbS$ is a Sylow $e$-torus of $\tbG$ since $\tbG=\Z(\tbG)\bG$.
Write $\bL=\C_{\bG}(\bS)$ and $\tbL=\C_{\tbG}(\bS)$.
Then $\tbL=\C_{\tbG}(\tbS)=\Z(\tbG)\bL$ and $\bL=\tbL\cap \bG$.
Let $\bS^*$ (reps. $\tbS^*$) be a Sylow $e$-torus of $\bG^*$ (resp. $\tbG^*$) in duality with $\bS$ (resp. $\tbS$).

\begin{prop}\label{prop:max-ext}
	\begin{enumerate}[\rm(i)]
\item 	There is an $\N_{\bG^F\cB}(\bS)$-equivariant extension map for  \(\Irr(\bL^F)\) with respect to \( \bL^F\unlhd \N_{\bG^F}(\bS) \).
\item 	There is an $\N_{\tbG^F \cB}(\tbL)$-equivariant extension map $\widetilde\Lambda$ for $\Irr(\tbL^F)$ with respect to $\tbL^F\unlhd \N_{\tbG^F}(\bS)$ such that for every \(\xi \in \Irr(\tbL^F)\) and \(\delta \in \Irr(\tbG^F/\bG^F)\),
\[\widetilde\Lambda(\xi \operatorname{Res}_{\tbL^F}^{\tbG^F}(\delta)) = \widetilde\Lambda(\xi) \operatorname{Res}_{\N_{\tbG^F}(\bS)_{\xi}}^{\tbG^F}(\delta).\] 
\end{enumerate}
\end{prop}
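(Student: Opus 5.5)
This proposition is essentially already available in the literature on the inductive McKay condition, so I would prove it mainly by citing the extension-map results for normalizers of Sylow $e$-tori. I would then check that their hypotheses match our setting with $\bS$ a Sylow $\phi_e$-torus.

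\textbf{Part (i).} For $\bL=\C_{\bG}(\bS)$ the centralizer of a Sylow $e$-torus, I would invoke Sp\"ath's theorem on extension maps for Sylow $d$-tori: \cite{Sp12} for most types, completed by \cite{CS13,CS17a,CS17b,CS19} for types $\ty A$, $\ty C$ and $\ty D$. That result gives an $\N_{\bG^F\cB}(\bS)$-equivariant extension map for $\Irr(\bL^F)$ with respect to $\bL^F\unlhd\N_{\bG^F}(\bS)$.

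The proof there has three steps.
\begin{enumerate}[\rm(1)]
\item Show that $\N_{\bG^F}(\bS)=\bL^F V$, where $V$ is a well-chosen subgroup of the extended Weyl group (Tits group) $V\le\N_{\bG}(\bT)$ such that $V\cap\bL^F$ is controlled.
\item Show that characters of $\bL^F$ extend to their stabilizers. One first handles $\bL^F$ as a torus, or as a product of a torus and small-rank groups, and then uses a criterion of the form: if $\xi$ extends to $(\bL^F\rtimes V)_\xi$ compatibly with $\bL^F\cap V$, then $\xi$ extends to $\N_{\bG^F}(\bS)_\xi$.
\item Obtain $\cB$-equivariance by choosing $V$ to be $\cB$-stable and making the extension choices on orbit representatives, then transporting them by conjugation.
\end{enumerate}
Equivariance is then automatic, because we define $\Lambda$ on an $\N_{\bG^F\cB}(\bS)$-transversal and set $\Lambda(\xi^x)=\Lambda(\xi)^x$. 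This is well defined provided the chosen extension of a representative is invariant under its stabilizer in $\N_{\bG^F\cB}(\bS)$. Arranging that invariance is precisely the content of the cited theorems, so I would not reprove it.

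\textbf{Part (ii).} I would deduce this from (i) in the way done in \cite[Prop.~3.12 or Cor.~5.x]{CS17a} and \cite{Sp12}. The key facts are:
\begin{itemize}
\item $\tbL^F=\Z(\tbG)^F\bL^F$, up to the relation $\tbG^F=\Z(\tbG)^F\bG^F\cdot(\text{diagonal})$;
\item $\N_{\tbG^F}(\bS)=\tbL^F\N_{\bG^F}(\bS)$;
\item $\tbL^F\cap\N_{\bG^F}(\bS)=\bL^F$.
\end{itemize}
Given $\xi\in\Irr(\tbL^F)$, let $\xi_0\in\Irr(\bL^F\mid\xi)$. Since $\tbL^F/\bL^F$ is abelian and $\tbL$ has connected center, $\xi|_{\bL^F}$ is multiplicity free.

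The construction of $\widetilde\Lambda(\xi)$ then goes as follows. Take $\Lambda(\xi_0)$ on $\N_{\bG^F}(\bS)_{\xi_0}$, and use the central-product structure $\tbL^F\N_{\bG^F}(\bS)_{\xi_0}$ to glue it with $\xi$. Concretely, extend $\xi_0\cdot\mu$, with $\mu$ the relevant central character, to $\tbL^F_{\xi_0}\N_{\bG^F}(\bS)_{\xi_0}$. Then induce to $\N_{\tbG^F}(\bS)_\xi$; this induction is irreducible by Clifford theory, and it lies over $\xi$ because $\N_{\tbG^F}(\bS)_\xi\cap\tbL^F\N_{\bG^F}(\bS)_{\xi_0}$ has the correct index.

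\textbf{Main obstacle.} The compatibility with multiplication by $\Res^{\tbG^F}_{\tbL^F}(\delta)$, $\delta\in\Irr(\tbG^F/\bG^F)$, is the step I expect to be hardest. To get it, I would choose representatives of $\Irr(\tbL^F)$ modulo the free action of the group of such linear characters $\delta$ together with $\N_{\tbG^F\cB}(\tbL)$. I would then define $\widetilde\Lambda$ on representatives and extend by the rule $\widetilde\Lambda(\xi\delta)=\widetilde\Lambda(\xi)\delta$, checking well-definedness. The danger is an element $x$ together with some $\delta$ satisfying $\xi^x=\xi\delta|_{\tbL^F}$ while $\delta\neq 1$ on $\N_{\tbG^F}(\bS)_\xi$. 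Here I would argue, as in \cite[Thm.~4.1]{CS17a}, that the restriction $\Irr(\tbG^F/\bG^F)\to\Irr(\tbL^F/\bL^F)$ is injective, since $\tbG^F=\tbL^F\bG^F$, and that the extension chosen in (i) is $\tbL^F$-stable up to these twists. Everything else is bookkeeping with Clifford correspondence.
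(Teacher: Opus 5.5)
Your part (i) follows the paper: both simply cite the literature. Since the proof is nothing but citations, though, the references have to be right, and several of yours are not. \cite{Sp12} is about defining characteristic and plays no role here. Maximal extendibility comes from \cite{Sp09,Sp10a,Sp10b}, together with the remark that for $\ell\in\{2,3\}$ one has $e\in\{1,2\}$, so $\bL$ is a torus by \cite{KM15}. The $\N_{\bG^F\cB}(\bS)$-equivariant choice also needs \cite{CS26} besides \cite{CS13,CS17a,CS17b,CS19}, because \cite{CS19} treats types $\ty B$ and $\ty E$, not $\ty D$.

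The genuine gap is in part (ii). The paper does not derive (ii) from (i). It identifies (ii) as condition $B(d)$ of \cite[Definition~2.2]{CS19}, cites \cite[Cor.~5.14]{CS17a}, \cite[Thm.~6.1]{CS17b}, \cite[\S4--\S6]{CS19} and \cite[\S6]{CS26}, and uses (i) only when $\Z(\bG)=1$.

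Your derivation rests on the bullet $\tbL^F=\Z(\tbG)^F\bL^F$ and on a ``central-product structure''. This is false. Since $\tbG^F=\tbL^F\bG^F$ and $\tbL^F\cap\bG^F=\bL^F$, the quotient $\tbL^F/\Z(\tbG)^F\bL^F$ is isomorphic to $\tbG^F/\Z(\tbG)^F\bG^F$, the group of diagonal automorphisms. This group is nontrivial, for instance, for $\SL_n(q)$ with $\gcd(n,q-1)>1$. So $\tbL^F$ acts on $\N_{\bG^F}(\bS)$ by non-inner automorphisms, and that is exactly where the difficulty lies.

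To see this concretely, let $\widetilde\xi_0\in\Irr(\tbL^F_{\xi_0})$ be the Clifford correspondent of $\xi$ over $\xi_0$. Put $X=\tbL^F_{\xi_0}\N_{\bG^F}(\bS)_{\widetilde\xi_0}$; you must use the stabiliser of $\widetilde\xi_0$, not of $\xi_0$. Since $[\tbL^F,\N_{\bG^F}(\bS)]\subseteq\tbL^F\cap\bG^F=\bL^F$, the group $\tbL^F_{\xi_0}$ acts trivially on $\N_{\bG^F}(\bS)_{\widetilde\xi_0}/\bL^F$. Hence any two extensions of $\xi_0$ to $\N_{\bG^F}(\bS)_{\widetilde\xi_0}$ differ by a $\tbL^F_{\xi_0}$-fixed linear character. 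It follows that $\widetilde\xi_0$ extends to $X$ if and only if some, equivalently every, extension of $\xi_0$ there is $\tbL^F_{\xi_0}$-invariant.

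Part (i) gives only $\N_{\bG^F\cB}(\bS)$-equivariance, which says nothing about this, and no re-choice or twist can repair a failure. What you need is an extension map for $\bL^F\unlhd\N_{\bG^F}(\bS)$ that is equivariant under $\tbL^F$, indeed under $\N_{\tbG^F\cB}(\bS)$. Establishing that is the case-by-case content of the cited papers. Your sentence that the extension from (i) ``is $\tbL^F$-stable up to these twists'' simply assumes it. Moreover, the $\delta$'s are trivial on $\N_{\bG^F}(\bS)$, so ``up to twists'' has no content there.

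The same missing equivariance is what you need to show three further things: that $\widetilde\Lambda$ is independent of the choice of $\xi_0$ in its $\tbL^F$-orbit, that it is equivariant, and that $\widetilde\Lambda(\xi)^x=\widetilde\Lambda(\xi)\Res^{\tbG^F}_{\N_{\tbG^F}(\bS)_\xi}(\delta)$ whenever $\xi^x=\xi\Res^{\tbG^F}_{\tbL^F}(\delta)$. The bijectivity of $\Irr(\tbG^F/\bG^F)\to\Irr(\tbL^F/\bL^F)$ that you invoke does not address this. Also, \cite[Thm.~4.1]{CS17a} is the global $A(\infty)$ statement, not a local extension result.
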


\begin{proof}
First we consider (i).	
Maximal extendibility holds for \(\Irr(\bL^F)\) with respect to \( \bL^F\unlhd \N_{\bG^F}(\bS) \).
This is established in \cite[Thm.~A]{Sp09} for exceptional groups, in \cite[Thm.~1.1]{Sp10a} for classical groups when $\bL^F$ is abelian, and in \cite[Thm.~1.1]{Sp10b} for the primes $\ell>3$.
Note that if $\ell\in\{ 2, 3 \}$, then $e\in\{ 1, 2 \}$ and thus $\bL$ is a torus by \cite[Lemma~3.17]{KM15}.	
Moreover, by \cite[Prop.~5.9]{CS17a}, \cite[Thm.~4.3]{CS17b}, \cite[\S4--\S6]{CS19}
and \cite[\S6]{CS26}, there exists a $\N_{\bG^F\cB}(\bS)$-equivariant extension map for $\Irr(\bL^F)$ with respect to $\bL^F\unlhd \N_{\bG^F}(\bS)$; recall that for the case that $\Z(\bG)$ is connected, this follows from \cite[\S5 and \S6]{CS13}.

The statement (ii) is the $B(d)$ condition defined in \cite[Definition~2.2]{CS19}. By (i) we may assume that $\Z(\bG)$ is disconnected.
Then the $B(d)$ condition follows from
\cite[Cor.~5.14]{CS17a}, \cite[Thm.~6.1]{CS17b}, \cite[\S4-\S6]{CS19} and \cite[\S6]{CS26}.
\end{proof}

	In this section, we will prove the following theorem.

\begin{thm}\label{thm:equiv}
\begin{enumerate}[\rm(1).]
		\item There exists an $\N_{\tbG^F \cB}(\bS)$-equivariant injection
	\[  \widetilde\Psi\colon\Irr(\N_{\tbG^F}(\bS))\embed\Irr(\tbG^F) \]
	such that
	\begin{enumerate}[\rm(i)]	
		\item for every semisimple element
		$\ts\in\C_{{\tbG}^{*F}}({\tbS}^*)$, \[\widetilde\Psi(\Irr(\N_{\tbG^F}(\bS)\mid\cE(\C_{{\tbG}^F}(\bS),\ts)))\subseteq\cE(\tbG^F,\ts),\] 
		\item	$\widetilde\Psi(\Irr(\N_{\tbG^F}(\bS)\mid\widetilde\lambda))\subseteq \Irr(\tbG^F\mid\widetilde\lambda)$ for every $\widetilde\lambda\in\Irr(\Z(\tbG)^F)$, and
		\item $\widetilde\Psi(\widetilde\eta\Res^{\tbG^F}_{\N_{\tbG^F}(\bS)}(\delta))=\widetilde\Psi(\widetilde\eta)\delta$ for every $\widetilde\eta\in\Irr(\N_{\tbG^F}(\bS))$ and $\delta\in\Irr(\tbG^F/\bG^F)$.
	\end{enumerate} 	
	\item \begin{enumerate}[\rm(i)]
		\item Maximal extendibility holds for \(\Irr(\bG^F)\) with respect to \( \bG^F\unlhd \tbG^F \).
		\item Maximal extendibility holds for $\Irr(\N_{\bG^F}(\bS))$ with respect to $\N_{\bG^F}(\bS)\unlhd \N_{\tbG^F}(\bS)$.
	\end{enumerate}
	\item \begin{enumerate}[\rm(i)]
		\item  For every \( \widetilde\chi \in \Irr(\tbG^F) \), there exists some \( \chi_0 \in \Irr(\bG^F \mid \widetilde\chi) \) such that
		\begin{enumerate}[\rm(a)]
			\item \((\tbG^F \cB)_{\chi_0} = \tbG^F_{\chi_0}  \cB_{\chi_0}\) and
			\item \( \chi_0 \) extends to its stabiliser \(\bG^F  \cB_{\chi_0}\).
		\end{enumerate}
		\item 
		For every \( \widetilde\eta \in \Irr(\N_{\tbG^F}(\bS)) \), there exists some \( \eta_0 \in \Irr(\N_{\bG^F}(\bS) \mid \widetilde\eta) \) such that
		$O=\bG^F\N_{\tbG^F\cB}(\bS)_{\eta_0}$
		satisfies
		\begin{enumerate}[\rm(a)]
			\item 
			$O=(\tbG^F\cap O)(\cB\cap O)$, and 
			\item
			$\eta_0$ extends to its stabiliser	$\N_{\bG^F\cB}(\bS)_{\eta_0}$.
		\end{enumerate}
	\end{enumerate}
	\end{enumerate}
\end{thm}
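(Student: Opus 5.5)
Part (1) rests on the structure of $\N_{\tbG^F}(\bS)$ and on Jordan decomposition for $\tbG$, which has connected center. By Proposition~\ref{prop:max-ext}(ii) and Clifford theory with the extension map $\widetilde\Lambda$, the characters of $\N_{\tbG^F}(\bS)$ are parametrised by pairs $(\xi,\psi)$. Here $\xi$ runs over $\N_{\tbG^F}(\bS)$-orbit representatives in $\Irr(\tbL^F)$, and $\psi\in\Irr(W_{\tbG^F}(\tbL,\xi))$ with $W_{\tbG^F}(\tbL,\xi)=\N_{\tbG^F}(\bS)_\xi/\tbL^F$. The corresponding character is $\Ind(\widetilde\Lambda(\xi)\psi)$.

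Write $\xi\in\cE(\tbL^F,\ts)$ with $\ts\in\C_{\tbG^{*F}}(\tbS^*)$, and let $\xi$ correspond to the unipotent character $\lambda=(\sJ^{\tbL}_{\ts})^{-1}(\xi)$ of $\C_{\tbL^*}(\ts)^F$. Then $(\C_{\tbL^*}(\ts),\lambda)$ is a unipotent $e$-cuspidal pair of $\C_{\tbG^*}(\ts)$, since $\tbL^*$ is the centralizer of a Sylow $e$-torus. The relative Weyl group $W_{\tbG^F}(\tbL,\xi)$ is identified with $W_{\C_{\tbG^*}(\ts)^F}(\C_{\tbL^*}(\ts),\lambda)$; this is the usual argument of \cite{Ma07,CS13}, using that $\C_{\tbG^*}(\ts)$ is connected.

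I would then define
\[\widetilde\Psi(\Ind(\widetilde\Lambda(\xi)\psi)) := \sJ^{\tbG}_{\ts}\bigl(\fI^{\C_{\tbG^*}(\ts)}_{\C_{\tbL^*}(\ts),\lambda}(\psi)\bigr).\]
Since distinct $e$-Harish-Chandra series are disjoint, this map is injective, and it lands in $\cE(\tbG^F,\ts)$, giving (1)(i). Property (1)(ii) follows because Lusztig series determine central characters: the central character of $\cE(\tbG^F,\ts)$ is determined by $\ts$ modulo $[\tbG^*,\tbG^*]$, and the same holds on the local side. Property (1)(iii) follows from the equivariance of Proposition~\ref{prop:max-ext}(ii) under linear characters together with $\sJ_{\ts z}=\hat z\otimes\sJ_{\ts}$ for central $z$.

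The main obstacle is the $\cB$-equivariance of $\widetilde\Psi$. This requires the Jordan decomposition and the BMM bijections $\fI$ to be equivariant under field and graph automorphisms. I would invoke the equivariant choices of Jordan decomposition for connected center from \cite[Thm.~7.1]{DM90} and \cite{CS13}, together with the $\Aut$-equivariance of $\fI$ established in the McKay verification \cite{CS19,CS26}. I would avoid building a bijection and instead simply cite the equivariant bijection $\Irr_{\ell'}$-independent version constructed there, restricting to $\ell'$-ness nowhere.

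For (2)(i), $\tbG^F/\bG^F$ is abelian, and multiplicity-freeness of restriction from $\tbG^F$ to $\bG^F$ (Lusztig) gives maximal extendibility. For (2)(ii), use that $\tbL^F/\bL^F\cong\tbG^F/\bG^F$ and $\N_{\tbG^F}(\bS)=\tbL^F\N_{\bG^F}(\bS)$, and combine Proposition~\ref{prop:max-ext}(i),(ii) with Clifford theory, as in \cite[Lemma~4.?]{CS17a}.

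Part (3) is the statement $A(\infty)$ and its local analogue $A'(\infty)$ from \cite{CS19}. Part (3)(i) is proven in \cite{CS26} for the remaining type $\ty D$, building on \cite{CS17a,CS17b,CS19}. For (3)(ii), choose $\eta_0=\Ind(\Lambda(\lambda_0)\psi)$ with $\lambda_0\in\Irr(\bL^F)$ satisfying the analogous stabiliser property. The $\cB$-equivariant extension map of Proposition~\ref{prop:max-ext}(i) then yields the required extension to $\N_{\bG^F\cB}(\bS)_{\eta_0}$, since relative Weyl group characters extend because the relevant quotients are cyclic.
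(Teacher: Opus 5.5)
\textbf{Parts that match the paper.} Your part (1) is essentially the paper's route. The paper takes the equivariant injection satisfying (ii) and (iii) from \cite[Prop.~4.14(a)]{CS26}, and reads off (i) from its construction $\widetilde\Psi(\widetilde\eta)=\sJ^{\tbG}_{\ts}(\fI(\eta))$, which is the map you write down. Your (2)(i) via Lusztig's multiplicity-freeness and your (3)(i) via the $A(\infty)$ results are also fine; for type $\ty D$ the reference for $A(\infty)$ is \cite[Thm.~A]{Sp25}.

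\textbf{Gap in (2)(ii).} Here you replace a citation by a derivation that does not go through. Proposition~\ref{prop:max-ext}(i) only gives an $\N_{\bG^F\cB}(\bS)$-equivariant extension map $\Lambda$. For $t\in\tbL^F$ fixing $\lambda\in\Irr(\bL^F)$ you only know that $\Lambda(\lambda)^t=\Lambda(\lambda)\mu_t$ for some linear character $\mu_t$ of $\N_{\bG^F}(\bS)_\lambda/\bL^F$. So Clifford theory gives you neither the $\N_{\tbG^F}(\bS)$-stabiliser of $\eta=\Ind(\Lambda(\lambda)\psi)$ nor an extension of $\eta$ to it. If $\Lambda$ were $\tbL^F$-equivariant, gluing with an extension of $\lambda$ to $\tbL^F_\lambda$ would indeed give (2)(ii), but that equivariance is exactly what is not available.

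Bringing in $\widetilde\Lambda$ does not repair this formally. Restricting $\Ind(\widetilde\Lambda(\xi)\widetilde\psi)$ to $\N_{\bG^F}(\bS)$ requires restricting $\widetilde\psi$ from the inertia quotient of $\xi$ to that of the pair $(\xi,\lambda)$, and this restriction need not be multiplicity free.

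What does follow formally is the reduction the paper makes. The quotient $\N_{\tbG^F}(\bS)/\Z(\tbG^F)\N_{\bG^F}(\bS)\cong\tbG^F/\Z(\tbG^F)\bG^F$ is cyclic except in type $\ty D_n$ with $n$ even, so (2) is automatic outside that case. In type $\ty D_n$, $n$ even, this quotient can be $C_2\times C_2$. Then an obstruction in $H^2(C_2\times C_2,\CC^\times)\cong C_2$ has to be shown to vanish, and that is the content of \cite[\S5--6]{CS26}.

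\textbf{Gap in (3)(ii).} First, the same non-equivariance means that condition (a) for $\eta_0=\Ind(\Lambda(\lambda_0)\psi)$ does not follow from a stabiliser property of $\lambda_0$. You also do not justify that such a $\lambda_0$ exists.

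Second, your claim that ``the relevant quotients are cyclic'' is false. The group $\N_{\bG^F\cB}(\bS)_{\lambda_0}/\N_{\bG^F}(\bS)_{\lambda_0}$ embeds in $\cB$. For untwisted $\ty D_n$ ($n\neq 4$) and $\ty E_6$, $\cB$ is $C_f\times C_2$; for $\ty D_4$ it is $C_f\times\fS_3$. For $\lambda_0=1_{\bL^F}$ all of $\cB$ occurs, so you would need every $\psi\in\Irr(W)$ to extend to $W$ extended by its stabiliser in $\cB$. This is true in the relevant cases, but only through specific facts about relative Weyl groups, not through cyclicity.

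That extension property is the genuine input here. It is supplied by \cite[Thm.~5.1]{CS17a}, \cite[Thm.~3.1]{CS17b}, \cite[\S5--6]{CS19} and \cite[Thm.~6.9]{CS26}, which is what the paper cites. You should rely on these references rather than the argument you sketch.
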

	
\begin{proof}
We first prove (2). 	Note that we can assume that $(\bG,F)$ is of type $\ty D_n$ with even $n\ge 4$, since otherwise $\tbG^F/\Z(\tbG^F)\bG^F$ is cyclic.	
Thus the  statement (2.i) follows by \cite[\S16]{CE04} and \cite[(f)]{Lu08}, while (2.ii) follows by \cite[\S and \S6]{CS26}.
	
The statement (3) is the $A(\infty)$ and $A(d)$ conditions defined in \cite[Definition~2.2]{CS19}.
The  statement (3.i) is proved in \cite[Thm.~4.1]{CS17a}, \cite[Thm.~3.1]{CS17b}, \cite[Thm.~B]{CS19} and \cite[Thm.~A]{Sp25}, while (3.ii) is proved in \cite[Thm.~5.1]{CS17a},  \cite[Thm.~3.1]{CS17b}, \cite[\S5 and \S6]{CS19} and \cite[Thm.~6.9]{CS26}.	
	
Now we consider (1).	
By \cite[Prop.~4.14(a)]{CS26}, one has an $\Irr(\tbG^F/\bG^F)\rtimes \N_{\bG^F\cB}(\bS)$ injection 
$\widetilde\Psi:\Irr(\N_{\tbG^F}(\bS))\to\Irr(\tbG^F)$ such that (1.ii) and (1.iii) hold.
Finally, we verify (1.i).
In fact, the construction of $\widetilde\Psi$ follows from the strategy of \cite[\S7]{Ma07}, \cite[Cor.~3.3]{CS13} and \cite[\S6]{CS19} and we recall it as follows.
Let $\widetilde\eta\in\Irr(\N_{\bG^F}(\bS))$ and $\widetilde\vartheta\in\Irr(\C_{{\tbG}^F}(\tbS)\mid\widetilde\eta)$.
Write $\widetilde\vartheta=\sJ_{\ts}^{\tbL}(\la)$ where $\ts$ is a semisimple element of $\tbL^{*F}$ and $\la$ is a unipotent character of $\C_{\tbG^{*F}}(\tilde s,\,\tbS^*)$.
On the other hand, if we let $\widetilde\chi=\widetilde\Psi(\widetilde\eta)$, then $\widetilde\chi=\sJ^{\tbG}_{\tilde s}(\fI^{\C_{\tbG^{*}}(\tilde s)}_{\C_{\tbG^{*}}(\tilde s,\,\tbS^*),\,\la}(\eta))$ where $\eta\in\Irr(W_{\C_{\tbG^{*F}}(\tilde s)}(\C_{\tbG^{*F}}(\tilde s,\,\tbS^*),\lambda))$.	Hence $\widetilde\chi\in \cE(\tbG^F,\ts)$.
\end{proof}

Now we establish the following result.
	
\begin{cor}\label{cor:syl-torus}
There exists an $\N_{\tbG^F \cB}(\bS)$-equivariant injection
\[  \Psi\colon\Irr(\N_{\bG^F}(\bS))\embed\Irr(\bG^F) \]
such that 
\begin{enumerate}[\rm(i)]
\item for every semisimple element $s\in\C_{{\bG^*}^F}(\bS^*)$, \[\Psi(\Irr(\N_{\bG^F}(\bS)\mid\cE(\bL^F,s)))\subseteq\cE(\bG^F,s),\] 
%\item  \( \Psi(\operatorname{Irr}(\N_{\bG^F}(\bS) \mid \lambda)) \subseteq \operatorname{Irr}(\bG^F \mid \lambda) \) for every \( \lambda \in \operatorname{Irr}(\Z(\bG^F)) \),  and
\item  for any $\eta\in\Irr(\N_{\bG^F}(\bS))$ and $\chi=\Psi(\eta)$,
\[((\tbG^F \cB)_\chi,\bG^F,\chi)
\succeq_{c}
(\N_{\tbG^F \cB}(\bS)_\eta,\N_{\bG^F}(\bS),\eta).\]		
\end{enumerate}
\end{cor}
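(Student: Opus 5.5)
The plan is to descend from the injection $\widetilde\Psi$ of Theorem~\ref{thm:equiv}(1) to $\bG^F$, following the Butterfly/criterion argument of Cabanes--Sp\"ath (as in \cite[Thm.~2.4]{CS19} or \cite[\S4]{CS26}). The data are $\widetilde\Psi$, the maximal extendibility in Theorem~\ref{thm:equiv}(2), and the conditions $A(\infty)$, $A(d)$ of Theorem~\ref{thm:equiv}(3).

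First I would construct $\Psi$. Let $\eta\in\Irr(\N_{\bG^F}(\bS))$, choose $\widetilde\eta\in\Irr(\N_{\tbG^F}(\bS)\mid\eta)$, and put $\widetilde\chi=\widetilde\Psi(\widetilde\eta)$. Theorem~\ref{thm:equiv}(2) says restrictions from $\tbG^F$ to $\bG^F$ (resp.\ from $\N_{\tbG^F}(\bS)$ to $\N_{\bG^F}(\bS)$) are multiplicity free. Gallagher theory and (1.iii) then show that $\Irr(\N_{\bG^F}(\bS)\mid\widetilde\eta)$ and $\Irr(\bG^F\mid\widetilde\chi)$ are single $\tbG^F$-orbits, of sizes governed by the stabilisers in $\Irr(\tbG^F/\bG^F)$. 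Since $\widetilde\Psi$ is $\Irr(\tbG^F/\bG^F)$-equivariant, $\widetilde\eta$ and $\widetilde\chi$ have the same stabiliser there, so the two orbits have equal length.

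Next I fix the base points $\eta_0$ and $\chi_0$ from Theorem~\ref{thm:equiv}(3). The aim is to match them so that the stabilisers satisfy $\N_{\tbG^F\cB}(\bS)_{\eta_0}\le(\tbG^F\cB)_{\chi_0}$ and $(\tbG^F\cB)_{\chi_0}=\bG^F\N_{\tbG^F\cB}(\bS)_{\eta_0}$. This comparison of stabilisers is the step I expect to be the main obstacle. I would handle it as in \cite[Prop.~4.14 and Thm.~4.?]{CS26}. The $\cB$-equivariance of $\widetilde\Psi$ together with (3.i)(a) and (3.ii)(a) gives the decomposition of the stabilisers, and the Frattini argument $\N_{\bG^F\cB}(\bS)\bG^F=\bG^F\cB$ lets me conjugate the base points so that they correspond. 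I then set $\Psi(\eta_0^x)=\chi_0^x$ for $x\in\N_{\tbG^F\cB}(\bS)$ and extend over orbits. This map is well defined by the stabiliser equality, equivariant by construction, and injective because $\widetilde\Psi$ is injective.

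Property (i) is inherited from Theorem~\ref{thm:equiv}(1.i). If $\eta$ lies over $\cE(\bL^F,s)$, then $\widetilde\eta$ lies over $\cE(\tbL^F,\ts)$ for some $\ts$ mapping to $s$ under $\tbG^*\to\bG^*$. Hence $\widetilde\chi\in\cE(\tbG^F,\ts)$, and its constituents lie in $\cE(\bG^F,s)$ by \cite[Prop.~11.7]{Bo00}-type restriction of Lusztig series (here $\tbS^*$ maps onto $\bS^*$).

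For (ii), I would build projective representations as in the proof of \cite[Thm.~2.4]{CS19} or \cite[Thm.~4.3]{Sp17}. By (3.i)(b) and (3.ii)(b), $\chi_0$ and $\eta_0$ extend to $\bG^F\cB_{\chi_0}$ and to $\N_{\bG^F\cB}(\bS)_{\eta_0}$ respectively. By Theorem~\ref{thm:equiv}(2) they also extend to $\tbG^F_{\chi_0}$ and $\N_{\tbG^F}(\bS)_{\eta_0}$, and I take these extensions to be constituents of the restrictions of $\widetilde\chi$ and $\widetilde\eta$. The decompositions (a) give a tensor-type projective representation on the full stabiliser whose factor set is inflated from the abelian group $\tbG^F_{\chi_0}/\bG^F$. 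Choosing the $\cB$-part extensions compatibly via restriction to $\N_{\bG^F\cB}(\bS)_{\eta_0}$, the two factor sets agree on the local stabiliser, which yields $\succeq$. For centrality, note that $\C_{\tbG^F\cB}(\bG^F)=\Z(\tbG^F)\le\N_{\tbG^F}(\bS)$. The scalars coincide on $\Z(\tbG^F)$ by Theorem~\ref{thm:equiv}(1.ii), since $\widetilde\eta$ and $\widetilde\chi$ lie over the same $\widetilde\lambda\in\Irr(\Z(\tbG)^F)$.
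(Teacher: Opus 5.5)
Your overall route is the paper's. Its proof only says the corollary follows from Theorem~\ref{thm:equiv} by the argument of \cite[Thm.~2.12]{Sp12}, which is the descent you sketch. Your construction of $\Psi$, your proof of (i), and your treatment of the central scalars via (1.ii) are fine.

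\textbf{The gap.} The step that actually carries (ii) is the equality of factor sets, and your stated reason for it does not work. You attribute it to ``choosing the $\cB$-part extensions compatibly via restriction''. But an extension of $\chi_0$ to $\bG^F\cB_{\chi_0}$ and an extension of $\eta_0$ to $\N_{\bG^F\cB}(\bS)_{\eta_0}$ lie over different characters of different normal subgroups, so there is no compatibility to impose. In fact these extensions do not affect the factor sets at all. Similarly, the factor set is not ``inflated from $\tbG^F_{\chi_0}/\bG^F$''; it is trivial on $\tbG^F_{\chi_0}$ itself.

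\textbf{What is needed instead.} Write $(\tbG^F\cB)_{\chi_0}=XY$ with $X=\tbG^F_{\chi_0}$ and $Y=\bG^F\cB_{\chi_0}$, rather than $X\cB_{\chi_0}$. This choice restricts compatibly to the local side: by (3.ii)(a) and the stabiliser equality, the local stabiliser is $\N_{\tbG^F}(\bS)_{\eta_0}\,\N_{\bG^F\cB}(\bS)_{\eta_0}$, with $\N_{\bG^F\cB}(\bS)_{\eta_0}\le Y$.

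For $\cP(xy)=\cD_1(x)\cD_2(y)$, Schur's lemma shows the factor set is determined by the linear characters $\mu_y\in\Irr(X/\bG^F)$ defined by $\widetilde\chi_0^{\,y}=\widetilde\chi_0\mu_y$, whatever $\cD_2$ is. Locally it is determined in the same way by $\mu'_z$, where $\widetilde\eta_0^{\,z}=\widetilde\eta_0\mu'_z$.

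So you must show $\Res^{X}_{\N_{\tbG^F}(\bS)_{\eta_0}}(\mu_z)=\mu'_z$ for $z\in\N_{\bG^F\cB}(\bS)_{\eta_0}$. This is where you need both the $\Irr(\tbG^F/\bG^F)\rtimes\N_{\tbG^F\cB}(\bS)$-equivariance of $\widetilde\Psi$ and your choice of $\widetilde\chi_0,\widetilde\eta_0$ as the Clifford correspondents of $\widetilde\chi=\widetilde\Psi(\widetilde\eta)$ and $\widetilde\eta$:
\begin{enumerate}[\rm(a)]
\item Pick $\delta\in\Irr(\tbG^F/\bG^F)$ extending $\mu'_z$. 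Then $\widetilde\eta^{\,z}=\Ind(\widetilde\eta_0\mu'_z)=\widetilde\eta\,\Res(\delta)$.
\item By equivariance and (1.iii), $\widetilde\chi^{\,z}=\widetilde\chi\delta$.
\item Also $\widetilde\chi^{\,z}=\Ind^{\tbG^F}_X(\widetilde\chi_0\mu_z)$. The Clifford correspondence over $\chi_0$ and Gallagher on $X$ then give $\Res^{\tbG^F}_X(\delta)=\mu_z$.
\end{enumerate}

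\textbf{A smaller point on stabilisers.} No conjugation of base points is needed. Conjugating by elements of $\N_{\tbG^F\cB}(\bS)$ would in general destroy the decompositions (a). The equality holds for any admissible choice:
\begin{enumerate}[\rm(a)]
\item (3.i)(a) forces $\cB_{\chi_0}=\{b\in\cB\mid\widetilde\chi^{\,b}\in\widetilde\chi\Irr(\tbG^F/\bG^F)\}$.
\item Equivariance of $\widetilde\Psi$, the Frattini argument and (3.ii)(a) show that $O\cap\cB$ is this same group.
\item $O\cap\tbG^F=\bG^F\N_{\tbG^F}(\bS)_{\eta_0}=\tbG^F_{\chi_0}$, because the stabilisers of $\widetilde\eta$ and $\widetilde\chi$ in $\Irr(\tbG^F/\bG^F)$ coincide.
\end{enumerate}
Hence $O=(\tbG^F\cB)_{\chi_0}$ for any $\chi_0,\eta_0$ as in Theorem~\ref{thm:equiv}(3).
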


\begin{proof}
This follows from Theorem~\ref{thm:equiv}.
In fact, the arguments are just the same as in the proof of \cite[Thm.~2.12]{Sp12}.		
\end{proof}

\begin{cond}\label{cond:group}
	Let $\bG$, $\tbG$, $\cB$, and $q$ be defined as in \S\ref{subsec:irr-nor-tori}.
	Let $\ell$ be an odd prime with $\ell\nmid q|\Z(\bG)|$ and good for $\bH$. 
	Assume further that when $\ell = 3$, the group $\mathbf{G}^F$ is not one of the following:
	\begin{itemize}
			\item $\bH^F$ is of type $^3\ty D_4$,
		\item \( \mathbf{G}^F = \mathrm{SL}_3(q) \) with \( q \equiv 4, 7 \pmod{9} \),
		\item \( \mathbf{G}^F = \mathrm{SU}_3(q) \) with \( q \equiv 2, 5 \pmod{9} \),
		\item \( \mathbf{G}^F = \ty G_2(q) \) with \( q \equiv 2, 4, 5, 7 \pmod{9} \).
	\end{itemize}
	Suppose that $P$ is a Sylow $\ell$-subgroup of $\bG^F$.
	Set $e=e_\ell(q)$.
	Let $\bS$ be the unique Sylow $e$-torus of $\bG$ containing $\Cab(P)$.
\end{cond}

\begin{prop}\label{prop:corr-nor}
Keep Condition~\ref{cond:group}.
Then there exists an $\N_{\tbG^F\cB}(P)$-equivariant injection
\[\Delta\colon\Irr(\N_{\bG^F}(P)\mid 1_P)\embed\Irr(\N_{\bG^F}(\bS)\mid \cE(\bL^F,\ell'))  \]
such that 
\begin{equation}\label{equ-nor}
	(\N_{\tbG^F\cB}(\bS)_\varphi,\N_{\bG^F}(\bS),\varphi)\succeq_{c} (\N_{\tbG^F\cB}(P)_\theta,\N_{\bG^F}(P),\theta)
	\addtocounter{thm}{1}\tag{\thethm}
\end{equation}
for every $\theta\in\Irr(\N_{\bG^F}(P)\mid 1_P)$ and $\varphi=\Delta(\theta)$.
\end{prop}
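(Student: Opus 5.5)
Write $N:=\N_{\bG^F}(\bS)$, $H:=\N_{\bG^F}(P)$, $A:=\Cab(P)$ and $X:=\textup{O}_{\ell'}(\bL^F)$. The idea is to reduce everything to the coprime action of $P/A$ on $X$ and use the Glauberman correspondence, which is compatible with central isomorphisms. Proposition~\ref{prop:Syl} gives $H\le N$ and $\bL^F=A\times X$. It also gives $\C_P(A)=A$ and $\N_{\bL^F}(P)=A\,\C_X(P)$. By Proposition~\ref{prop:Syl}(vi), $\Irr(N\mid\cE(\bL^F,\ell'))=\Irr(N\mid 1_A)$, which is $\Irr(N/A)$ by inflation. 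Similarly $\Irr(H\mid 1_P)=\Irr(H/P)$. Since $A\le P$, both triples in (\ref{equ-nor}) may be treated modulo $A$, and a central isomorphism downstairs inflates to one upstairs. So I work in $\overline N=N/A$, with $\overline P=P/A$ and $X\cong\bL^F/A$ normal in $\overline N$. Put $J:=X\overline P$, which is normal in $\overline N$. By the Frattini argument $N=\bL^F H$, so $\overline N=J\,\overline H$, and $\N_J(\overline P)=\overline P\times\C_X(P)$. The same holds with $\overline N$ replaced by $\N_{\tbG^F\cB}(\bS)/A$ and $\overline H$ by $\N_{\tbG^F\cB}(P)/A$.

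First I handle the local step in $J$. Here $\overline P$ is an $\ell$-group acting coprimely on the $\ell'$-group $X$. For each $\overline P$-invariant $\vartheta\in\Irr(X)$, let $\widehat\vartheta\in\Irr(J)$ be its canonical extension (determinant order prime to $\ell$). Let $\vartheta^*\in\Irr(\C_X(P))$ be its Glauberman correspondent, and set $\phi_\vartheta:=\vartheta^*\times 1_{\overline P}\in\Irr(\N_J(\overline P))$. Then $\vartheta\mapsto\vartheta^*$ is a bijection $\Irr_{\overline P}(X)\to\Irr(\C_X(P))$. It is equivariant for every group acting on $J$ and normalizing $\overline P$, in particular for $\N_{\tbG^F\cB}(P)$. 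Its key strengthening is the central isomorphism
\[(\N_{\tbG^F\cB}(\bS)_{\widehat\vartheta}/A,\;J,\;\widehat\vartheta)\succeq_c(\N_{\tbG^F\cB}(P)_{\phi_\vartheta}/A,\;\N_J(\overline P),\;\phi_\vartheta).\]
I take this from the Navarro--Sp\"ath--Vallejo/Turull results on the Glauberman correspondence and character triples, and only adjust to the present normal subgroup $J$. For $\succeq_c$ I must also check that the centralizer of $J$ lies in the normalizer of $P$ and that the scalars agree there. The centralizer condition is clear, since anything centralizing $J$ centralizes $\overline P$. The scalars agree because the correspondence is compatible with restriction to central $\ell'$-subgroups.

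Next comes the global step, by Clifford theory. Let $\mathcal N=\{\widehat\vartheta\}$ and $\mathcal M=\{\phi_\vartheta\}$, with $\vartheta$ running over $\Irr_{\overline P}(X)$. The map $\widehat\vartheta\mapsto\phi_\vartheta$ satisfies the hypotheses of Proposition~\ref{prop:Cli}, applied to the overgroup $\N_{\tbG^F\cB}(\bS)/A$ with $J$ in the role of the normal subgroup. Taking the ambient normal subgroup there to be $\overline N$ itself (equivalently, using Lemma~\ref{lem:iso-triple}(i) orbit by orbit), I obtain an $\N_{\tbG^F\cB}(P)$-equivariant bijection $\Irr(\overline N\mid\mathcal N)\to\Irr(\overline H\mid\mathcal M)$ carrying the required $\succeq_c$. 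Every irreducible character of $\overline H/\overline P=H/P$ lies over some $\vartheta^*\times 1_{\overline P}$, so $\Irr(\overline H\mid\mathcal M)=\Irr(H/P)$. Inverting and inflating gives $\Delta$, an injection into $\Irr(N/A)=\Irr(N\mid\cE(\bL^F,\ell'))$ satisfying (\ref{equ-nor}).

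The main obstacle is the local central isomorphism for the Glauberman correspondence. It must hold for the full group $\N_{\tbG^F\cB}(\bS)$, which includes diagonal and field/graph automorphisms, not merely inside $N$. I would first try to quote the above-cut version of the Navarro--Sp\"ath--Vallejo theorem (valid for any $\ell'$-normal subgroup with $\ell$-group acting, inside an arbitrary overgroup). If that is not directly available, I would construct the projective representations by hand from canonical extensions to $X\overline P$, in the style of the proof of Proposition~\ref{prop:Cli}. A secondary point is checking that equivariance passes through the quotient by $A$ and that inflation preserves $\succeq_c$; both are routine because $A$ lies in the kernel of all characters involved.
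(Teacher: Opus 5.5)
\textbf{There is a genuine gap in your argument.} You claim that $J=X\overline P$ is normal in $\overline N$ and that the Frattini argument gives $N=\bL^F H$. Both claims are false in general.

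\begin{enumerate}
\item Since $\overline N/X\cong N/\bL^F$ (the relative Weyl group of $\bL$), $J/X$ is the Sylow $\ell$-subgroup $P\bL^F/\bL^F$ of this group. Normality of $J$ and the factorization $N=\bL^FH$ are each equivalent to $P\bL^F/\bL^F\unlhd N/\bL^F$.
\item This fails in cases needed for Theorem~B. For $\bG^F=\ty E_8(q)$ with $\ell=7\mid q-1$ one has $e=1$, $\bL$ a maximal torus, and $N/\bL^F=W(\ty E_8)$. Its Sylow $7$-subgroup is not normal, since $W(\ty E_8)$ has the simple section $O_8^+(2)$. Likewise for types $\ty B_n,\ty C_n,\ty D_n$ with $e=1$ and $n\ge\ell$, where the Weyl group involves $\fS_n$.
\item Consequently Proposition~\ref{prop:Cli} cannot be applied with $J$ as the normal subgroup. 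Its hypotheses $J\unlhd\N_{\tbG^F\cB}(\bS)/A$ and $\N_{\tbG^F\cB}(\bS)=\bL^FP\,\N_{\tbG^F\cB}(P)$ both fail.
\item The Glauberman correspondence for $X\rtimes\overline P$ only handles the normal $\ell'$-part. What remains is to match $\ell'$-degree characters of $\overline N$ with those of $\N_{\overline N}(\overline P)$ across the quotient $N/\bL^F$. This is a McKay-type problem for (central extensions of subgroups of) a reflection group that is in general not $\ell$-solvable. No coprime-action argument resolves it.
\end{enumerate}
Your argument does go through when $P\bL^F/\bL^F\unlhd N/\bL^F$, but that is only a special case.

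\textbf{The missing ingredient is a genuine McKay bijection at the level of $N/A$.} The paper obtains it by applying Theorem~\ref{thm-MC} (Cabanes--Sp\"ath, in its central-isomorphism form) to $N/A\unlhd\N_{\tbG^F\cB}(\bS)/A$ with Sylow $\ell$-subgroup $P/A$.
\begin{enumerate}
\item The normalizer of $P/A$ in $N/A$ is exactly $H/A$, because $H\le N$.
\item Since $\Irr(H\mid 1_P)=\Irr(H/P)\subseteq\Irr_{\ell'}(H/A)$, restricting that bijection yields the injection $\Delta$.
\item The central isomorphisms inflate from the quotients by $A$, as you describe.
\item Each $\Delta(\theta)$ lies over $1_A$, hence in $\Irr(N\mid\cE(\bL^F,\ell'))$ by Proposition~\ref{prop:Syl}(vi).
\end{enumerate}
Your reduction modulo $A$ and the identifications $\Irr(N\mid\cE(\bL^F,\ell'))=\Irr(N/A)$ and $\Irr(H\mid 1_P)=\Irr(H/P)$ are correct and match the paper. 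Only the Glauberman/Clifford step has to be replaced by this appeal to the McKay theorem.
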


\begin{proof}
Let $\bL=\C_{\bG}(\bS)$.	Write $A=\Z(\bL)^F_\ell$.
By Proposition~\ref{prop:Syl}, one has that $A\unlhd \N_{\tbG^F\cB}(\bS)$.
So $\N_{\N_{\bG^F}(\bS)/A}(P/A)=\N_{\bG^F}(P)/A$
and $\N_{\N_{\tbG^F\cB}(\bS)/A}(P/A)=\N_{\tbG^F\cB}(P)/A$.
According to Theorem~\ref{thm-MC}, there exists an $\N_{\tbG^F\cB}(P)/A$-equivariant bijection
\[\mathtt f:\Irr_{\ell'}(\N_{\bG^F}(P)/A)\to \Irr_{\ell'}(\N_{\bG^F}(\bS)/A)\]
such that 
\begin{equation}\label{equ:nor2}
((\N_{\tbG^F\cB}(\bS)/A)_{\overline\varphi},\N_{\bG^F}(\bS)/A,\overline\varphi)\succeq_{c} 
((\N_{\tbG^F\cB}(P)/A)_{\overline\theta},\N_{\bG^F}(P)/A,\overline\theta)
\addtocounter{thm}{1}\tag{\thethm}
\end{equation}
for every $\overline\theta\in\Irr_{\ell'}(\N_{\bG^F}(P)/A)$ and $\overline\varphi=\mathtt f(\overline\theta)$.

Note that $A$ lies in the kernel of every character in $\Irr(\N_{\bG^F}(P)\mid 1_P)$.
Hence $\mathtt f$ induces an injection 
\[\Delta\colon\Irr(\N_{\bG^F}(P)\mid 1_P)\embed\Irr(\N_{\bG^F}(\bS))\]
which is $\N_{\tbG^F\cB}(P)$-equivariant.
Let $\theta\in\Irr(\N_{\bG^F}(P)\mid 1_P)$ and $\varphi=\Delta(\theta)$.
Let $\overline\theta$ and $\overline\varphi$ be the deflations of $\theta$ and $\varphi$ to $\N_{\bG^F}(P)/A$ and $\N_{\bG^F}(\bS)/A$ respectively.
Then (\ref{equ:nor2}) implies (\ref{equ-nor}) by the proof of \cite[Lemma~3.12]{NS14}.
Therefore, it remains to show that $\varphi\in\Irr(\N_{\bG^F}(\bS)\mid \cE(\bL^F,\ell')) $.
This follows from Proposition~\ref{prop:Syl}.
\end{proof}

\begin{cor}\label{cor:ord-central}
Keep Condition~\ref{cond:group}.
There exists an $\N_{\tbG^F \cB}(P)$-equivariant injection
\[  \Omega\colon\Irr(\N_{\bG^F}(P)\mid 1_P)\hookrightarrow\cE(\bG^F,\ell') \]
such that  for any $\theta\in\Irr(\N_{\bG^F}(P)\mid 1_P)$ and $\chi=\Omega(\theta)$,
	\[((\tbG^F \cB)_\chi,\bG^F,\chi)
	\succeq_{c}
	(\N_{\tbG^F \cB}(P)_\theta,\N_{\bG^F}(P),\theta).\]		
\end{cor}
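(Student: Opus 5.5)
The natural approach is to define $\Omega$ as the composite $\Omega:=\Psi\circ\Delta$. Here $\Delta$ is the injection of Proposition~\ref{prop:corr-nor}, and $\Psi$ is the injection of Corollary~\ref{cor:syl-torus} restricted to $\Irr(\N_{\bG^F}(\bS)\mid\cE(\bL^F,\ell'))$. The proof then splits into three checks: that the composite lands in $\cE(\bG^F,\ell')$, that it is $\N_{\tbG^F\cB}(P)$-equivariant, and that the two central isomorphisms compose.

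\emph{Image.} Take $\theta\in\Irr(\N_{\bG^F}(P)\mid 1_P)$. By Proposition~\ref{prop:corr-nor}, $\varphi=\Delta(\theta)$ lies over some character of $\cE(\bL^F,s)$, where $s\in\bL^{*F}$ is a semisimple $\ell'$-element. Since $\bL^*=\C_{\bG^*}(\bS^*)$, this $s$ centralizes $\bS^*$. Corollary~\ref{cor:syl-torus}(i) then gives $\Psi(\varphi)\in\cE(\bG^F,s)\subseteq\cE(\bG^F,\ell')$. Injectivity of $\Omega$ is clear because both $\Delta$ and $\Psi$ are injective.

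\emph{Equivariance.} By Proposition~\ref{prop:Syl}(ii), $\N_{\bG^F}(P)\subseteq\N_{\bG}(\bS)$. The same uniqueness argument applies to the whole of $\N_{\tbG^F\cB}(P)$: this group stabilizes $P$, hence stabilizes $\Cab(P)$, and $\bS$ is the unique Sylow $e$-torus containing $\Cab(P)$, so it is stabilized as well. This gives $\N_{\tbG^F\cB}(P)\le\N_{\tbG^F\cB}(\bS)$. Now $\Delta$ is $\N_{\tbG^F\cB}(P)$-equivariant, and $\Psi$ is $\N_{\tbG^F\cB}(\bS)$-equivariant and hence in particular $\N_{\tbG^F\cB}(P)$-equivariant. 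So the composite is $\N_{\tbG^F\cB}(P)$-equivariant.

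\emph{Central isomorphism.} Let $\chi=\Omega(\theta)$. Corollary~\ref{cor:syl-torus}(ii) gives
\[((\tbG^F\cB)_\chi,\bG^F,\chi)\succeq_c(\N_{\tbG^F\cB}(\bS)_\varphi,\N_{\bG^F}(\bS),\varphi),\]
and Proposition~\ref{prop:corr-nor} gives
\[(\N_{\tbG^F\cB}(\bS)_\varphi,\N_{\bG^F}(\bS),\varphi)\succeq_c(\N_{\tbG^F\cB}(P)_\theta,\N_{\bG^F}(P),\theta).\]
Equivariance gives $\N_{\tbG^F\cB}(P)_\theta=\N_{\tbG^F\cB}(P)_\chi$, and this is $\le\N_{\tbG^F\cB}(\bS)_\varphi$ because stabilizing $\theta$ forces stabilizing $\varphi=\Delta(\theta)$. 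I would then invoke transitivity of $\succeq_c$ (e.g.\ \cite[Lemma~3.8]{Sp17}, or argue directly). The group-theoretic conditions are: $(\tbG^F\cB)_\chi=\bG^F\N_{\tbG^F\cB}(\bS)_\varphi$, $\N_{\tbG^F\cB}(\bS)_\varphi=\N_{\bG^F}(\bS)\N_{\tbG^F\cB}(P)_\theta$, and the intersections $\bG^F\cap\N_{\tbG^F\cB}(P)_\theta=\N_{\bG^F}(P)$. Combining the first two yields $(\tbG^F\cB)_\chi=\bG^F\N_{\tbG^F\cB}(P)_\theta$. For the centralizer condition, $\C(\bG^F)\le\N_{\tbG^F\cB}(\bS)_\varphi$ and it also centralizes $\N_{\bG^F}(\bS)$, so it lies in $\N_{\tbG^F\cB}(P)_\theta$.

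For the projective representations, suppose the first relation is given by $(\cP,\cP')$ and the second by $(\cQ,\cQ')$. Both $\cP'$ and $\cQ$ are associated with the same triple for $\varphi$, so $\cQ=\mu\cP'$ for a function $\mu$ constant on $\N_{\bG^F}(\bS)$-cosets. I would then replace $\cQ'$ by $\mu|\cdot\cQ'$ so that the factor sets match on the restriction. The scalars on centralizers agree for the same reason, since $\mu(c)=1$ is forced there by the scalar condition. This transitivity step is the only point requiring care, and even it is a standard manipulation.
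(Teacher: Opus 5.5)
Your proposal is correct and is essentially the paper's proof: the paper composes $\Delta$ from Proposition~\ref{prop:corr-nor} with $\Psi$ from Corollary~\ref{cor:syl-torus} and uses transitivity of $\succeq_c$, and you make explicit the steps it leaves implicit, namely $\N_{\tbG^F\cB}(P)\le\N_{\tbG^F\cB}(\bS)$ via uniqueness of $\bS$ and the image lying in $\cE(\bG^F,\ell')$. One small fix to your transitivity sketch: if $\cQ=\mu\cP'$, rescale $\cQ'$ by $\mu^{-1}$ (not $\mu$) on $\N_{\tbG^F\cB}(P)_\theta$; this matches both the factor sets and the scalars on $\C_{(\tbG^F\cB)_\chi}(\bG^F)$ automatically, so no condition $\mu(c)=1$ is needed (and it is not forced in general).
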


\begin{proof}
This follows directly from Corollary~\ref{cor:syl-torus} and Proposition~\ref{prop:corr-nor}.	
\end{proof}

We recall the following result of \cite{FW26}.
\begin{thm}\label{prop:bas-set}
Let $\bG$, $\tbG$, $\cB$, and $q$ be defined as in \S\ref{subsec:irr-nor-tori}. Let $\ell$ be a prime good for $\bG$ and satisfying $\ell\nmid q$. 
\begin{enumerate}[\rm(i)]
	\item There exists a blockwise $(\textup{Lin}_{\ell'}(\tbG^F/\bG^F)\rtimes \cB)$-equivariant bijection between $\cE(\tbG^F,\ell')$ and $\IBr(\tbG^F)$.
    \item Assume further that $\ell\nmid |\Z(\bG)^F|$, then there exists a blockwise $(\tbG^F\rtimes\cB)$-equivariant bijection between $\cE(\bG^F,\ell')$ and $\IBr(\bG^F)$.    
\end{enumerate} 
\end{thm}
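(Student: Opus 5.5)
Part (i) would come from the theory of unitriangular basic sets, applied to the regular embedding $\bG\embed\tbG$. Since $\Z(\tbG)$ is connected and $\ell$ is good and prime to $q$, the results of Geck--Hiss \cite{GH91} and Geck \cite{Ge93} show that $\cE(\tbG^F,\ell')$ is a basic set for $\tbG^F$. The first step is to show that this basic set is unitriangular. I would rely on the existence of unitriangular basic sets for finite reductive groups at good primes, proved via generalized Gelfand--Graev characters, whose unipotent supports are compatible with Lusztig's and Kawanaka's constructions. A unitriangular decomposition matrix yields a canonical bijection $\cE(\tbG^F,\ell')\to\IBr(\tbG^F)$: each $\chi$ is sent to the unique Brauer character in its row sitting on the diagonal for a fixed ordering. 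Canonicity is what makes this useful. Any automorphism that preserves $\cE(\tbG^F,\ell')$ and the triangularity ordering commutes with the bijection. Since $\cB$ permutes the Lusztig series and the ordering is defined through wave-front sets and unipotent supports, the ordering is $\cB$-stable.

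Multiplication by linear characters $\delta\in\textup{Lin}_{\ell'}(\tbG^F/\bG^F)$ has to be treated as well. Such a $\delta$ sends $\cE(\tbG^F,\ts)$ to $\cE(\tbG^F,\ts z)$ for a central $\ell'$-element $z$, so it preserves the $\ell'$-series. It also preserves the restriction of the decomposition matrix, because tensoring with a linear Brauer character permutes the columns compatibly. The bijection is blockwise by construction, since unitriangularity is compatible with the block decomposition: decomposition numbers vanish between different blocks.

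Part (ii) would follow by Clifford theory between $\bG^F$ and $\tbG^F$. When $\ell\nmid|\Z(\bG)^F|$, the quotient $\tbG^F/\bG^F\Z(\tbG^F)$ is an $\ell'$-group. Hence restriction from $\tbG^F$ to $\bG^F$ behaves identically for ordinary characters in $\cE(-,\ell')$ and for Brauer characters. Concretely, both $\widetilde\chi$ and its image $\widetilde\phi$ restrict multiplicity-freely, and the restrictions have the same number of constituents. That number is the size of the stabilizer of $\widetilde\chi$ in $\textup{Lin}_{\ell'}(\tbG^F/\bG^F)$, and the equivariance in (i) makes these stabilizers agree.

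Two points then remain. First, $\cE(\bG^F,\ell')$ is exactly the set of constituents of restrictions from $\cE(\tbG^F,\ell')$. Second, the constituents of the two restrictions must be matched $\tbG^F\rtimes\cB$-equivariantly. I would obtain this matching by showing that the unitriangular shape descends to $\bG^F$: the constituents of $\Res\widetilde\chi$ should be ordered compatibly with the constituents of $\Res\widetilde\phi$, so the diagonal of the decomposition matrix of $\bG^F$ gives the bijection canonically.

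The main obstacle is this descent step. The $\tbG^F$-orbits on constituents are a single orbit, so they are harmless. The difficulty lies in the $\cB$-action on the individual constituents. I would first try to show that the decomposition matrix of $\bG^F$ restricted to $\cE(\bG^F,\ell')$ is itself unitriangular with respect to an ordering refined by generalized Gelfand--Graev characters of $\bG^F$. Those characters are $\cB$-stable up to the same $\tbG^F$-conjugation ambiguity, and the Brauer characters are fixed in the same way, so the canonical diagonal bijection is automatically $\tbG^F\rtimes\cB$-equivariant.
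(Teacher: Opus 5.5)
The paper does not prove this itself; it quotes \cite[Thm.~3.5]{FW26}. Measured against what a proof has to supply, your part~(ii) has a genuine gap, and part~(i) needs a repair to its equivariance argument.

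\textbf{Part (ii).} The matching of the constituents of $\Res\widetilde\chi$ with those of $\Res\widetilde\phi$, together with its $\tbG^F\rtimes\cB$-equivariance, \emph{is} the content of~(ii). You leave it as something you ``would first try to show'' via generalized Gelfand--Graev characters of $\bG^F$. The only support offered is that these are $\cB$-stable ``up to the same conjugation ambiguity''. That is not an argument, and it puts the difficulty in the wrong place.

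Equivariance is automatic once unitriangularity is known (see the next paragraph). Unitriangularity of $\cE(\bG^F,\ell')$ descends from~(i) by Clifford theory alone:
\begin{enumerate}[\rm(a)]
\item Write $\widetilde\phi=\widetilde\sigma(\widetilde\chi)$, $\Res\widetilde\chi=\sum_i\chi_i$, $\Res\widetilde\phi=\sum_j\phi_j$, and $\widetilde\chi^0$ for restriction to $\ell$-regular elements. Lower unitriangularity means $d_{\widetilde\chi',\widetilde\sigma(\widetilde\chi'')}\ne0$ implies $\widetilde\chi''\le\widetilde\chi'$.
\item By maximal extendibility \cite{Ge93} and Gallagher's theorem for Brauer characters, every irreducible Brauer character of $\tbG^F$ whose restriction meets $\Res\widetilde\phi$ equals $\widetilde\phi\delta$ for some $\delta\in\textup{Lin}_{\ell'}(\tbG^F/\bG^F)$. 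This uses that linear characters of $\ell$-power order are trivial on $\ell$-regular elements.
\item Suppose $d_{\widetilde\chi,\widetilde\phi\delta}\ne0$. Equivariance gives $d_{\widetilde\chi\delta^i,\widetilde\sigma(\widetilde\chi\delta^{i+1})}\ne0$ for all $i$. So the chain $\widetilde\chi\ge\widetilde\chi\delta\ge\widetilde\chi\delta^2\ge\cdots$ returns to $\widetilde\chi$, which forces $\widetilde\chi\delta=\widetilde\chi$.
\item Hence each $\phi_j$ occurs exactly once in $\sum_i\chi_i^0=\Res\widetilde\chi^0$. Send $\phi_j$ to the unique $\chi_i$ with $d_{\chi_i\phi_j}=1$. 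This is a $\tbG^F$-equivariant map between transitive $\tbG^F$-sets of equal size, by your stabilizer count (this is where $\ell\nmid|\Z(\bG)^F|$ enters), hence a bijection.
\item The same descending-chain argument shows that the relation ``$d_{\widetilde\chi,\widetilde\sigma(\widetilde\chi')}\ne0$'' is acyclic on $\textup{Lin}_{\ell'}$-orbits in $\cE(\tbG^F,\ell')$. Order these orbits along a linear extension. This makes the decomposition matrix of $\bG^F$ on $\cE(\bG^F,\ell')$ lower unitriangular, so in particular it is a basic set.
\end{enumerate}
No GGGR analysis of $\bG^F$ is needed.

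\textbf{Part (i).} No total order on a finite set is preserved by a nontrivial permutation, so ``the ordering is $\cB$-stable'' cannot hold. A wave-front preorder can be stable, but unitriangularity refers to a total refinement of it.

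What you need is uniqueness. If $D$ is lower unitriangular for one ordering, the diagonal bijection $\sigma$ is the \emph{only} bijection $\tau$ with $d_{\chi,\tau(\chi)}\ne0$ for all $\chi$, since a permutation $\pi$ with $\pi(i)\le i$ for all $i$ is trivial. So every symmetry preserving the basic set and the decomposition numbers commutes with $\sigma$: elements of $\cB$ and of $\tbG^F$, and multiplication by $\textup{Lin}_{\ell'}(\tbG^F/\bG^F)$. Blockwiseness follows from $d_{\chi,\sigma(\chi)}=1$. This settles equivariance in~(i) and, combined with the descent above, in~(ii).

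Finally, the unitriangularity input for~(i) must cover every Lusztig series in $\cE(\tbG^F,\ell')$ with no hypothesis on $p$. The Brunat--Dudas--Taylor GGGR result concerns unipotent blocks and good $p$, so that input needs a precise reference.
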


\begin{proof}
This is \cite[Thm.~3.5]{FW26}.
\end{proof}	

\begin{prop}\label{prop:equiv}
Keep Condition~\ref{cond:group}, and assume further that $\ell$ is good for $\bG$ and does not divide $|\Z(\bG)^F|$.	
\begin{enumerate}[\rm(1).]
\item There exists an $\N_{\tbG^F \cB}(P)$-equivariant injection
\[  \widetilde\Omega\colon\IBr(\N_{\tbG^F}(P))\hookrightarrow\IBr(\tbG^F) \]
such that
\begin{enumerate}[\rm(i)]	
\item	$\widetilde\Omega(\IBr(\N_{\tbG^F}(P)\mid\widetilde\lambda))\subseteq \IBr(\tbG^F\mid\widetilde\lambda)$ for every $\widetilde\lambda\in\IBr(\Z(\tbG)^F)$, and
\item $\widetilde\Omega(\widetilde\eta\Res^{\tbG^F}_{\N_{\tbG^F}(P)}(\delta))=\widetilde\Omega(\widetilde\eta)\delta$ for every $\widetilde\eta\in\IBr(\N_{\tbG^F}(P))$ and $\delta\in\IBr(\tbG^F/\bG^F)$.
\end{enumerate} 	
\item 
\begin{enumerate}[\rm(i)]
\item Maximal extendibility holds for \(\IBr(\bG^F)\) with respect to \( \bG^F\unlhd \tbG^F \).
\item Maximal extendibility holds for $\IBr(\N_{\bG^F}(P))$ with respect to $\N_{\bG^F}(P)\unlhd \N_{\tbG^F}(P)$.
\end{enumerate}
\item 
\begin{enumerate}[\rm(i)]
\item  For every \( \widetilde\chi \in \IBr(\tbG^F) \), there exists some \( \chi_0 \in \operatorname{IBr}(\bG^F \mid \widetilde\chi) \) such that
\begin{enumerate}[\rm(a)]
\item \((\tbG^F  \cB)_{\chi_0} = \tbG^F_{\chi_0}  \cB_{\chi_0}\) and
\item \( \chi_0 \) extends to its stabiliser \(\bG^F  \cB_{\chi_0}\).
\end{enumerate}
\item For every \( \widetilde\eta \in \IBr(\N_{\tbG^F}(P)) \), there exists some \( \eta_0 \in \operatorname{IBr}(\N_{\bG^F}(P) \mid \widetilde\eta) \) such that
$O=\bG^F\N_{\tbG^F\cB}(P)_{\eta_0}$
satisfies
\begin{enumerate}[\rm(a)]
\item $O=(\tbG^F\cap O)(\cB\cap O)$, and 
\item $\eta_0$ extends to its stabiliser $\N_{\bG^F\cB}(P)_{\eta_0}$.
\end{enumerate}
\end{enumerate}
\end{enumerate}
\end{prop}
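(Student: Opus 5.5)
The plan is to transfer the ordinary-character statements of Theorem~\ref{thm:equiv} and Corollary~\ref{cor:ord-central} to Brauer characters, using the basic-set bijections of Theorem~\ref{prop:bas-set} on the global side and the structure of $\N_{\bG^F}(P)$ given by Proposition~\ref{prop:Syl} on the local side. Locally, $P$ is in the kernel of every Brauer character of $\N_{\tbG^F}(P)$, so $\IBr(\N_{\tbG^F}(P))=\Irr(\N_{\tbG^F}(P)/P)$ with $\N_{\tbG^F}(P)/P$ an $\ell'$-group. The same holds for $\N_{\bG^F}(P)$, and $\IBr(\N_{\bG^F}(P))=\Irr(\N_{\bG^F}(P)\mid 1_P)$ as $\N_{\tbG^F\cB}(P)$-sets. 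Every Brauer-character statement in (1)--(3) on the local side therefore becomes a statement about ordinary characters of $\ell'$-quotients, where extendibility questions are governed by Gallagher-type coprimality arguments (\cite[Thm.~(8.29)]{Na98}, and Clifford correspondence over the normal Sylow $P$).

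For (1), I would combine three ingredients. The first is the injection $\widetilde\Psi$ of Theorem~\ref{thm:equiv}(1). The second is the tilde version of Proposition~\ref{prop:corr-nor}, namely the Cabanes--Sp\"ath bijection (Theorem~\ref{thm-MC}) applied to $\N_{\tbG^F}(\bS)/A\unlhd\N_{\tbG^F\cB}(\bS)/A$ with $A=\Z(\bL)^F_\ell$, which gives an injection $\Irr(\N_{\tbG^F}(P)\mid1_P)\embed\Irr(\N_{\tbG^F}(\bS)\mid\cE(\tbL^F,\ell'))$. The third is the bijection $\cE(\tbG^F,\ell')\to\IBr(\tbG^F)$ of Theorem~\ref{prop:bas-set}(i). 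Property (1.i) of Theorem~\ref{thm:equiv} guarantees that the image of the composite lands in $\cE(\tbG^F,\ell')$. Compatibility with central characters (1.i here) follows from Theorem~\ref{thm:equiv}(1.ii), together with the fact that central isomorphisms preserve central characters and that the basic-set bijection is blockwise, hence preserves the $\ell'$-part of central characters. Compatibility with $\delta\in\IBr(\tbG^F/\bG^F)=\textup{Lin}_{\ell'}(\tbG^F/\bG^F)$ in (1.ii) uses Theorem~\ref{thm:equiv}(1.iii), the equivariance of Theorem~\ref{prop:bas-set}(i) under $\textup{Lin}_{\ell'}$, and the linearity property of $\mathtt f$. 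That linearity property is obtained from Lemma~\ref{lem:iso-triple}(iii), or by arranging $\mathtt f$ via Proposition~\ref{prop:Cli} to commute with multiplication by linear characters of the abelian quotient.

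For (2) and (3) on the global side, I would use the blockwise $\tbG^F\rtimes\cB$-equivariant bijection $\cE(\bG^F,\ell')\to\IBr(\bG^F)$ of Theorem~\ref{prop:bas-set}(ii). Because it is $\tbG^F\cB$-equivariant, stabilisers agree, so (3.i.a) transfers from Theorem~\ref{thm:equiv}(3.i.a). For maximal extendibility (2.i) and extendibility to $\bG^F\cB_{\chi_0}$ (3.i.b), I would argue that $\chi_0\in\cE(\bG^F,\ell')$ extends to its stabiliser, that the extension restricted to $\ell$-regular elements is a Brauer character, and that this Brauer character is irreducible when the basic-set bijection is unitriangular, as in \cite{FW26}. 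If that fails, the fallback is a direct argument: the quotients involved are $\tbG^F_{\chi}/\bG^F$, which is abelian, and $\cB_{\chi_0}$. When all Sylow subgroups of these quotients are cyclic, \cite[Thm.~(8.29)]{Na98} applies. The case $\ty D_n$ with $n$ even needs the Brauer analogue of \cite{Lu08}.

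On the local side, (2.ii) and (3.ii) reduce, via $\IBr=\Irr(\cdot\mid1_P)$, to ordinary characters of the $\ell'$-groups $\N_{\bG^F}(P)/P$. Since $\ell\nmid|\N_{\tbG^F}(P)/P|$, I would deduce them from Theorem~\ref{thm:equiv}(2.ii) and (3.ii) transported through the central isomorphism (\ref{equ-nor}) of Proposition~\ref{prop:corr-nor}; strong isomorphisms of character triples preserve extendibility and stabilisers, by Lemma~\ref{lem:iso-triple}. I expect the main obstacle to be (3.i.b) and (2.i) in the Brauer setting, namely showing that equivariance of the basic-set bijection actually yields \emph{extendibility} of Brauer characters to $\bG^F\cB_{\chi_0}$. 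Equivariance alone does not give this, so it will likely require the unitriangularity of the decomposition matrix with respect to $\cE(\bG^F,\ell')$.
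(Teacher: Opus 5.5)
\textbf{Overall.} Your treatment of (2.ii) and (3.ii) matches the paper: you transport Theorem~\ref{thm:equiv}(2.ii),(3.ii) through the central isomorphism (\ref{equ-nor}). Your treatment of (3.i.a) is also fine, since stabilisers agree under the $\tbG^F\cB$-equivariant bijection of Theorem~\ref{prop:bas-set}(ii). There are, however, two genuine problems.

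\textbf{First problem: the local identification over $\tbG^F$ is wrong.} $P$ is a Sylow $\ell$-subgroup of $\bG^F$, not of $\tbG^F$. Since $\Z(\tbG)$ is a torus, $\textup{O}_\ell(\Z(\tbG^F))$ is often nontrivial, for instance for $\mathrm{CSp}_{2n}(q)$ with $\ell\mid q-1$. A Sylow $\ell$-subgroup of $\N_{\tbG^F}(P)$ is then $P\times\textup{O}_\ell(\Z(\tbG^F))$. So $\N_{\tbG^F}(P)/P$ is not an $\ell'$-group, and
$\IBr(\N_{\tbG^F}(P))=\Irr(\N_{\tbG^F}(P)\mid 1_{P\textup{O}_\ell(\Z(\tbG^F))})$, not $\Irr(\N_{\tbG^F}(P)/P)$.

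This is not cosmetic. Your injection $\Irr(\N_{\tbG^F}(P)\mid 1_P)\embed\Irr(\N_{\tbG^F}(\bS)\mid\cE(\tbL^F,\ell'))$ is false. By central-character compatibility, a character nontrivial on $\textup{O}_\ell(\Z(\tbG^F))$ must go to something lying over $\cE(\tbL^F,\ts)$ with $\ts$ having nontrivial central $\ell$-part. The composite then leaves $\cE(\tbG^F,\ell')$, and Theorem~\ref{prop:bas-set}(i) no longer applies. Likewise, Theorem~\ref{thm-MC} must be used with the Sylow subgroup $P\textup{O}_\ell(\Z(\tbG^F))/A$ rather than $P/A$. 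The paper works with exactly the domain $\Irr(\N_{\tbG^F}(P)\mid 1_{P\textup{O}_\ell(\Z(\tbG^F))})$.

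Also, Lemma~\ref{lem:iso-triple}(iii) does not give compatibility of $\mathtt f$ with $\theta\mapsto\theta\Res(\delta)$. That lemma concerns multiplying by characters of $J/N$ above a fixed triple, not twisting the bottom character. You need Proposition~\ref{prop:Cli} for this. Once you use it, the paper's route is shorter: it applies Proposition~\ref{prop:Cli} once to Corollary~\ref{cor:ord-central} with $\bG^F\le\tbG^F$. That directly yields the central isomorphisms and the $\ell'$-linear compatibility, with no need for $\widetilde\Psi$ or a second McKay bijection.

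\textbf{Second problem: your mechanism for (2.i) and (3.i.b) fails.} Let $\widetilde\chi_0$ extend $\chi_0\in\cE(\bG^F,\ell')$. The reduction $\widetilde\chi_0^0$ restricts to $\chi_0^0$, which is reducible in general. Unitriangularity does not make it irreducible; it only says the corresponding $\varphi_0$ occurs in $\chi_0^0$ with multiplicity one.

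What that fact does give is the following. If $X_{\varphi_0}=X_{\chi_0}$ for the relevant overgroup $X$, then exactly one constituent of $\widetilde\chi_0^0$ lies over $\varphi_0$. It occurs once and restricts to $\varphi_0$, so it extends $\varphi_0$. But this requires unitriangularity of $\cE(\bG^F,\ell')$ with respect to an order stable under $\tbG^F\cB$, with $\chi_0\leftrightarrow\varphi_0$ being the unitriangular bijection. Theorem~\ref{prop:bas-set} supplies only an equivariant bijection, so this input is missing.

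Your fallback via \cite[Thm.~(8.29)]{Na98} breaks down exactly in the hard cases:
\begin{enumerate}[\rm(a)]
\item $\tbG^F/\Z(\tbG^F)\bG^F$ is non-cyclic in type $\ty D_n$ with $n$ even;
\item $\cB_{\chi_0}$ can have non-cyclic Sylow subgroups, from field and graph automorphisms together in types $\ty D_n$ and $\ty E_6$, or from triality in $\ty D_4$.
\end{enumerate}
Invoking ``a Brauer analogue of \cite{Lu08}'' is not an argument. The paper closes both points with genuinely modular inputs: (2.i) is quoted from \cite{Ge93}, and (3.i) is \cite[Thm.~1.3]{FW26}, the Brauer $A(\infty)$ condition.
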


\begin{proof}
According to Proposition~\ref{prop:Cli} and Corollary~\ref{cor:ord-central}, there exists an $\N_{\tbG^F \cB}(P)$-equivariant injection
\[\widetilde\Omega'\colon\Irr(\N_{\tbG^F}(P)\mid 1_{P \textup{O}_\ell(\Z(\tbG^F))})\hookrightarrow\cE(\tbG^F,\ell') \]
such that for any $\widetilde\theta\in\Irr(\N_{\tbG^F}(P)\mid 1_{P \textup{O}_\ell(\Z(\tbG^F))})$ and $\widetilde\chi=\widetilde\Omega'(\widetilde\theta)$,
\[((\tbG^F \cB)_{\widetilde\chi},\tbG^F,\widetilde\chi)
\succeq_{c}
(\N_{\tbG^F \cB}(P)_{\widetilde\theta},\N_{\tbG^F}(P),\widetilde\theta),\]		
and $\widetilde\Omega'(\widetilde\theta\Res^{\tbG^F}_{\N_{\tbG^F}(P)}(\la))=\widetilde\Omega'(\widetilde\theta)\la$ for every $\la\in\Irr(\tbG^F/\bG^F)$ of $\ell'$-order.
Therefore, by Theorem~\ref{prop:bas-set}, the statement (1) holds.

The statement (2.i) follows by \cite{Ge93}, while the statement (3.i) holds by \cite[Thm.~1.3]{FW26}.
The statements (2.ii) and (3.ii) follows from Proposition~\ref{prop:corr-nor} and (2.ii) and (3.ii) of Theorem~\ref{thm:equiv}.
Thus we complete the proof.
\end{proof}

Then we prove:

\begin{thm}\label{thm:lie-good}
Let $\bG$ be a simple algebraic group of simply connected type over $\overline\FF_p$, and $F:\bG\to\bG$ is a Steinberg endomorphism.	
Assume that $S:=\bG^F/\Z(\bG^F)$ is simple, and $\ell$ is a prime different from $p$ and good for $\bG$ satisfying that $\ell\mid |\bG^F|$.
Then the inductive Sylow-AW condition holds for $S$ and $\ell$.
\end{thm}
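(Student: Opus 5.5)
We first reduce to the situation of Condition~\ref{cond:group} together with $\ell\nmid|\Z(\bG)^F|$. By \cite[Thm.~C]{NT11} we may assume $\ell$ is odd, and by Theorem~\ref{thm:exc-covering} we may assume $S$ has no exceptional Schur multiplier. Then $G=\bG^F$ is the universal covering group of $S$, and its universal $\ell'$-covering group is $G/\textup O_\ell(\Z(G))$. The excluded cases for $\ell=3$ in Condition~\ref{cond:group} are handled separately. The groups $\mathrm{SL}_3(q)$, $\mathrm{SU}_3(q)$ and $\ty G_2(q)$ with those congruences have small rank, so the known results on type $\ty A$ and on $\ty G_2$ (\cite{FLZ21,FLZ23,Li21} etc.) give the inductive blockwise Alperin weight condition, which implies the Sylow-AW condition. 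The type ${}^3\ty D_4$ case at $\ell=3$ is also covered by the literature. If $\ell\mid|\Z(\bG)^F|$ (only possible in type $\ty A$ since $\ell$ is good and odd), the condition is again known from the type $\ty A$ results. So we may keep Condition~\ref{cond:group} with $\ell\nmid|\Z(\bG)^F|$, and then $G$ itself is the universal $\ell'$-covering group.

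Under these assumptions we take $T=\N_G(P)$. We need an $\Aut(G)_P$-equivariant injection $\Omega\colon\IBr(\N_G(P))\embed\IBr(G)$ satisfying the central isomorphism (\ref{equ:ceniso}). Proposition~\ref{prop:equiv} supplies, for the pair $G\unlhd\tG\cB$ and $\N_G(P)\unlhd\N_{\tG\cB}(P)$, exactly the Brauer-character analogues of the hypotheses in Späth's criterion (cf. \cite[Thm.~2.12]{Sp12}, or the Brauer version in \cite{FLZ26,FS23}). These are an equivariant injection $\widetilde\Omega$ at the level of $\tG$ compatible with central characters and with multiplication by $\IBr(\tG/G)$ (part (1)), maximal extendibility (part (2)), and the stabilizer/extension conditions $A(\infty)$, $A(d)$ (part (3)). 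We run the argument of \cite[Thm.~2.12]{Sp12} with $K=\overline\FF_\ell$. For $\widetilde\eta\in\IBr(\N_{\tG}(P))$, choose $\eta_0$ as in (3.ii) and $\chi_0\in\IBr(G\mid\widetilde\Omega(\widetilde\eta))$ as in (3.i). Using (1.ii) and the bijection of $\IBr(\tG/G)$-orbits with $\tG$-orbits (Gallagher/Clifford via (2)), we can arrange $\chi_0$ to correspond to $\eta_0$ equivariantly and define $\Omega(\eta_0)=\chi_0$. Equivariance under $\tG\cB$ then follows, because $\tG\rtimes\cB$ induces $\Aut(G)$ (\cite[Thm.~2.5.1]{GLS98}) and $\N_{\tG\cB}(P)$ covers $\Aut(G)_P$ by a Frattini argument.

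Next we establish the central isomorphism. We build projective representations from the extensions in (3.i)(b) and (3.ii)(b) to $G\cB_{\chi_0}$ and $\N_{G\cB}(P)_{\eta_0}$, and from irreducible constituents over $\tG$. Part (1.i) forces the scalars on $\Z(\tG)^F=\C_{\tG\cB}(G)$ to agree, and the factor sets then agree on restriction. This is the Brauer version of \cite[Thm.~2.12]{Sp12}, equivalently \cite[Thm.~4.3]{Sp17} as used in Theorem~\ref{thm:criterion-SylowAW}. Finally, $T=\N_G(P)$ is proper because $P$ is not normal in the simple-by-central group $G$.

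The main obstacle is carrying Späth's ordinary-character argument over to Brauer characters. In particular, the Gallagher-type step needs that $\IBr(\tG/G)$ acts on $\IBr(\tG\mid\chi_0)$ with stabilizers controlled by $\tG_{\chi_0}$. Only $\ell'$-order linear characters are Brauer characters, but since $\ell\nmid|\Z(\bG)^F|$, the quotient $\tG/G\Z(\tG)$ is an $\ell'$-group, and I expect this to resolve that difficulty. The reductions in the excluded small cases also need care, and I would cite the existing blockwise Alperin weight verifications for them.
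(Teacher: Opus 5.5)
Your proposal follows essentially the same route as the paper: after discarding the cases already in the literature, you feed Proposition~\ref{prop:equiv} into the Brauer-character version of Sp\"ath's criterion, using the key fact that $\ell\nmid|\tbG^F/\Z(\tbG)^F\bG^F|$. The paper invokes \cite[Prop.~4.3]{FS23} and \cite[Thm.~3.3]{BS22} for exactly this step, and your explicit appeal to Theorem~\ref{thm:exc-covering}, so that $\bG^F$ really is the universal $\ell'$-cover, is a sensible addition.

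The only loose ends are in the reduction:
\begin{itemize}
\item The Suzuki and Ree groups (non-Frobenius $F$) are excluded from the setup of \S\ref{subsec:irr-nor-tori}, and hence from Condition~\ref{cond:group}. They must be removed separately, as in the paper, via \cite[Thm.~1.1]{Ma14}.
\item Types $\ty A$ and ${}^2\ty A$ should be discarded entirely via \cite{FLZ21}, because Condition~\ref{cond:group} needs $\ell\nmid|\Z(\bG)|$, not merely $\ell\nmid|\Z(\bG)^F|$. This also disposes of $\ell=2$, for which \cite[Thm.~C]{NT11} is the wrong citation, since it is a defining-characteristic result.
\item ${}^3\ty D_4$ at $\ell=3$ should be cited from \cite{Sc16}, not \cite{Li21}. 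The case $\ty G_2$ at $\ell=3$ does not arise, because $3$ is bad for $\ty G_2$.
\end{itemize}
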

	
\begin{proof}
Recall from \cite[p.~1209]{MNT23} that the inductive Sylow-AW condition is implied by the inductive Alperin weight condition from \cite{NT11}.
Therefore, by \cite[Thm.~1.1]{Ma14}, \cite[Thm.~1]{FLZ21} and \cite[Thm.~A]{Sc16}, we may assume that $(\bG,F)$ is not of type $\ty A_n$, $^2\ty A_n$, $\ty G_2$ or $^3\ty D_4$, and $F$ is a Frobenius endomorphism.
In particular, $\bG^F$ is neither a Suzuki group nor a Ree group by \cite[Thm.~1.1]{Ma14}.
Then $\ell$ is odd, and does not divide $|\Z(\bG)^F|$.
Let $P$ be a Sylow $\ell$-subgroup of $\bG^F$.
Then the statements of Proposition~\ref{prop:equiv} hold.

Let $\bG\le\tbG$ be a regular embedding.
Note that $\ell$ does not divide $|\tbG^F/\Z(\tbG)^F\bG^F|$.
Thus we can establish the inductive Sylow-AW condition for $S$ and $\ell$, using a similar argument of the proof of \cite[Prop.~4.3]{FS23} or \cite[Thm.~3.3]{BS22}.
\end{proof}

\begin{cor}\label{cor:great7}
	Let $S$ be a nonabelian simple group and $\ell$ be a prime with $\ell\notin\{3,5\}$ and $\ell\mid |S|$. Then the inductive Sylow-AW condition holds for $S$ and the prime $\ell$.
\end{cor}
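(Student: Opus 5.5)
The plan is to run through the classification of finite simple groups, treating each family either by citing a known case or by an earlier result of the paper. First, if $\ell=2$, the statement is \cite[Thm.~C]{NT11} (equivalently the $\ell=2$ case of \cite{MNT23}), so we assume $\ell\ge 7$. If the Sylow $\ell$-subgroups of the universal $\ell'$-covering group of $S$ are cyclic, we conclude by \cite[Thm.~1.1]{KS16} (or \cite[Thm.~5]{MNT23}). Alternating groups are covered by \cite[Thm.~1.1]{AD12}, and sporadic groups (and the Tits group) by \cite[Thm.~5]{MNT23}, which together with \cite[Thm.~C]{NT11} handle them for all primes. Since the inductive Alperin weight condition implies the inductive Sylow-AW condition (\cite[p.~1209]{MNT23}), any family for which the inductive blockwise Alperin weight condition is already established may also be quoted.

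There remain the simple groups of Lie type. If $\ell$ is the defining characteristic, we appeal to \cite[Thm.~1.1]{Ma14}. If $S$ has an exceptional Schur multiplier, Theorem~\ref{thm:exc-covering} applies. Otherwise $S=\bG^F/\Z(\bG^F)$ for a simple simply connected $\bG$ and a Steinberg endomorphism $F$, and $\bG^F$ is the universal $\ell'$-covering group of $S$. The Suzuki and Ree groups are again handled by \cite[Thm.~1.1]{Ma14} together with the cyclic Sylow case. For the remaining groups, $\ell\ge 7$ is good for $\bG$, since the bad primes are at most $5$. Also $\ell\ne p$, and $\ell$ can be assumed to divide $|\bG^F|$ because $\ell\mid|S|$. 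Theorem~\ref{thm:lie-good} then gives the inductive Sylow-AW condition for $S$ and $\ell$.

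The main point to check is that the hypotheses of Theorem~\ref{thm:lie-good} and Condition~\ref{cond:group} really hold for $\ell\ge7$ in every remaining case. Goodness of $\ell$ is clear. In the proof of Theorem~\ref{thm:lie-good}, the types $\ty A_n$ and ${}^2\ty A_n$ (where $\ell$ may divide $|\Z(\bG)^F|$) are reduced to \cite{FLZ21,Sc16}, so the condition $\ell\nmid|\Z(\bG)^F|$ only needs to be verified for the other types. There $|\Z(\bG)|$ divides $4$ or $3$, so this is immediate for $\ell\ge 7$. The exclusions at $\ell=3$ in Condition~\ref{cond:group} are irrelevant. Hence every case is covered, and the corollary follows.
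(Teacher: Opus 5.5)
Your argument is the paper's own. Reduce by citation to groups of Lie type in non-defining characteristic with $\ell\ge 7$, handle exceptional Schur multipliers with Theorem~\ref{thm:exc-covering}, and conclude with Theorem~\ref{thm:lie-good}, since every prime $\ell\ge 7$ is good.

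What needs fixing is the attribution of the preliminary reductions, which you have partly permuted:
\begin{itemize}
\item In the paper, the defining-characteristic case comes from \cite[Thm.~C]{NT11}; you cite \cite{Ma14} there.
\item The case $\ell=2$ comes from \cite[Thm.~5]{MNT23}, not from \cite[Thm.~C]{NT11}.
\item \cite{AD12} is the paper on sporadic groups, so it does not give the alternating groups. The paper obtains all non-Lie-type groups from \cite{AD12}, \cite[Thm.~1.1]{Ma14} and \cite[Thm.~5]{MNT23} together.
\end{itemize}
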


\begin{proof}
Thanks to \cite[Thm.~1.1]{AD12}, \cite[Thm.~1.1]{Ma14} and \cite[Thm.~5]{MNT23}, we may assume that $S$ is a simple group of Lie type and $\ell\ge 7$.  
Furthermore, by \cite[Thm.~C]{NT11} and Theorem~\ref{thm:exc-covering}, we may also assume that $S$ does not have an exceptional Schur multiplier and $\ell$ is a non‑defining characteristic, whence the assertion follows from Theorem~\ref{thm:lie-good}.  
\end{proof}

By \cite[Thm.~2]{MNT23}, Theorem B follow by Corollary~\ref{cor:great7} immediately.

\begin{cor}\label{cor:classical}
Let $S$ be a simple classical group. Then the inductive Sylow-AW condition holds for $S$.
\end{cor}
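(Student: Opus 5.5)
Let $S$ be a simple classical group and $\ell$ a prime dividing $|S|$; we must establish the inductive Sylow-AW condition for $S$ and $\ell$. The plan is a case split that reduces every situation either to a result quoted earlier or to Theorem~\ref{thm:lie-good}.

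First, the primes that are always covered. If $\ell=2$, the condition holds by \cite[Thm.~C]{NT11}, as used in Corollary~\ref{cor:D4ex}. If $\ell$ is the defining characteristic of $S$, it holds by the defining-characteristic results recalled in the remark after Theorem~\ref{thm:exc-covering}. If $S$ has an exceptional Schur multiplier, Theorem~\ref{thm:exc-covering} applies. Types $\ty A_n$ and $^2\ty A_n$ are covered by \cite[Thm.~1]{FLZ21} via the inductive Alperin weight condition, which implies the inductive Sylow-AW condition by \cite[p.~1209]{MNT23}. Isomorphisms with alternating groups, and small cases such as $\Omega_5(2)'$ or $\Omega_3(q)\cong \operatorname{PSL}_2(q)$, reduce to known cases (\cite{Ma14}, \cite[Thm.~5]{MNT23}). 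So we may assume that $S$ is of type $\ty B_n$, $\ty C_n$, $\ty D_n$ or $^2\ty D_n$, that $\ell$ is odd and non-defining, and that $S$ does not have an exceptional Schur multiplier.

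In this remaining situation, write $S=\bG^F/\Z(\bG^F)$ with $\bG$ simple and simply connected. For classical types the only bad prime is $2$, so an odd $\ell$ is good for $\bG$. Moreover $|\Z(\bG)|$ is a power of $2$, so $\ell\nmid|\Z(\bG)^F|$. The exceptional cases for $\ell=3$ in Condition~\ref{cond:group} concern types $^3\ty D_4$, $\ty A_2$, $^2\ty A_2$ and $\ty G_2$, none of which occur here. Hence all hypotheses of Theorem~\ref{thm:lie-good} hold, and it yields the inductive Sylow-AW condition for $S$ and $\ell$.

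The main obstacle is bookkeeping rather than new mathematics. One must check that the reductions quoted for types $\ty A$ and $^2\ty A$, for $\ell=2$ and for the defining prime really cover every classical group, including the small-rank coincidences between classical groups and groups of other types. One must also check that ``good prime'' and $\ell\nmid|\Z(\bG)^F|$ hold uniformly for odd $\ell$ in types $\ty B$, $\ty C$ and $\ty D$. I would handle the latter by observing that $\Z(\bG)$ is a $2$-group for all these types.
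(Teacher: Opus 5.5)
Your overall route is the paper's: remove $\ell=2$, the defining characteristic and the exceptional Schur multipliers, then apply Theorem~\ref{thm:lie-good}, using that odd primes are good for classical types. The step that fails as written is the case $\ell=2$.

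\cite[Thm.~C]{NT11} is the defining-characteristic result. It disposes of $\ell=2$ in Corollary~\ref{cor:D4ex} and Proposition~\ref{prop:2E6} only because $\Omega_8^+(2)$ and ${}^2\ty E_6(2)$ are defined over $\FF_2$. It says nothing about $\ell=2$ for classical groups over fields of odd characteristic, such as $\mathrm{PSp}_{2n}(q)$, $\Omega_{2n+1}(q)$ or $\mathrm{P}\Omega^{\pm}_{2n}(q)$ with $q$ odd. Your later steps cannot absorb these groups either. The prime $2$ is bad for types $\ty B$, $\ty C$, $\ty D$, so Theorem~\ref{thm:lie-good} does not apply, and none of your other reductions (types $\ty A$ and ${}^2\ty A$, exceptional multipliers, small isomorphisms) reach them. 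So as written, these cases are left unproved.

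The missing ingredient is \cite[Thm.~5]{MNT23}, which settles the inductive Sylow-AW condition at $\ell=2$. The paper's proof opens with exactly this: ``According to \cite[Thm.~5]{MNT23}, we may assume that $\ell$ is odd.'' Use that citation for $\ell=2$ instead of \cite[Thm.~C]{NT11}, and the rest of your argument coincides with the paper's. Your separate treatment of types $\ty A$ and ${}^2\ty A$, and your checks of $\ell\nmid|\Z(\bG)^F|$ and Condition~\ref{cond:group}, are harmless. They are redundant, though, since they are already built into the proof of Theorem~\ref{thm:lie-good}.
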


\begin{proof}
According to \cite[Thm.~5]{MNT23}, we may assume that $\ell$ is odd.
Thus, similar to the proof of Corollary~\ref{cor:great7}, the assertion follows from  Theorem~\ref{thm:exc-covering} and Theorem~\ref{thm:lie-good}.  
\end{proof}

Finally, we have:

\begin{thm}
	Assume that $\ell\notin\{3,5\}$ is a prime.
	Let \(G \unlhd A\) be finite groups and consider a Sylow \(\ell\)-subgroup \(P\) of \(G\). Then there is an \(\N_A(P)\)-invariant injection
	\[\Omega : \IBr(\N_G(P)) \hookrightarrow \IBr(G)\]
	such that
	\[(A_\varphi, G, \varphi) \geq_c (\N_A(P)_\theta, \N_G(P), \theta)\]
	for every \(\theta \in \IBr(\N_G(P))\) and \(\varphi = \Omega(\theta)\).
\end{thm}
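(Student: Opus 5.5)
This statement is the self-reduction form of Conjecture~A. I would not prove it from scratch. Instead I would combine the reduction theorem with the verification of the inductive condition already obtained. By Corollary~\ref{cor:great7}, for every $\ell\notin\{3,5\}$ the inductive Sylow-AW condition of Definition~\ref{defn-SAW} holds for every non-abelian simple group $S$ with $\ell\mid |S|$. Simple groups with $\ell\nmid|S|$ need no condition, since there $\N_G(P)=G$ for the relevant coverings. The plan is to feed this into the self-reduction theorem \cite[Thm.~2.13]{FMR26}. That theorem states that if the inductive Sylow-AW condition holds for all non-abelian simple groups and the prime $\ell$, then for every pair $G\unlhd A$ and $P\in\Syl_\ell(G)$ there is an $\N_A(P)$-equivariant injection $\Omega\colon\IBr(\N_G(P))\hookrightarrow\IBr(G)$ with $(A_\varphi,G,\varphi)\succeq_c(\N_A(P)_\theta,\N_G(P),\theta)$. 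This is exactly the assertion, in the same spirit as Theorem~\ref{thm-MC} for McKay, which Rossi \cite{Ro23} derived from the inductive McKay condition.

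If one instead wanted to carry out the reduction by hand, I would follow the structure of the proofs of \cite[Thm.~2]{MNT23} and \cite{Ro23}. First take a minimal counterexample $(G,A)$ with $|G:\Z(G)|$ minimal. Next reduce to the case where $\C_A(G)$ is central and $\ell'$, using Lemma~\ref{lem:iso-triple} and the transitivity of $\succeq_c$. Then choose a minimal normal subgroup $N$ of $A$ contained in $G$. If $N$ is an $\ell$-group, then $N\le P$ lies in the kernel of all Brauer characters, and one passes to $G/N$. If $N$ is an $\ell'$-group, or more generally $N\not\le\N_G(P)$, then $N\cong S^t$. Here the inductive condition for $S$ gives injections for $S$, and hence for $N$ by tensor-induction of projective representations in the wreath product, with central isomorphisms over $\N_A(P\cap N)$. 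One then lifts these through $G$ over $N$ via Clifford correspondence and a Frattini argument $A=N\N_A(P\cap N)$, in the manner of Proposition~\ref{prop:Cli}. Finally, one applies the inductive hypothesis to the smaller pair $\N_G(P\cap N)\unlhd\N_A(P\cap N)$ and composes the central isomorphisms.

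The main obstacle is the gluing step. One must produce, for each $N$-orbit representative, central isomorphisms between the character triples over $N$ and over $\N_N(P\cap N)$ that remain valid after passing to $G_\theta$ and its local counterpart. In addition, $\IBr$ is not closed under Gallagher-type multiplication as cleanly as $\Irr$, because $\ell$-parts of $G/N$ kill characters. I would handle this exactly as \cite{FMR26} does, working with $K=\overline{\FF}_\ell$ projective representations throughout and using Lemma~\ref{lem:iso-triple}(iii) for $\ell'$-quotients. In the written proof I would simply cite \cite[Thm.~2.13]{FMR26} together with Corollary~\ref{cor:great7}.
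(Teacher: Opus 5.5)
Your proposal is correct and matches the paper's proof, which simply feeds Corollary~\ref{cor:great7} into the self-reduction theorem \cite[Thm.~2.13]{FMR26}; the paper also cites \cite[Thm.~5]{MNT23}, but that input is already used inside Corollary~\ref{cor:great7}. Your by-hand reduction sketch is unnecessary and somewhat loose as written (for instance, an abelian $\ell'$ minimal normal subgroup is not of the form $S^t$ with $S$ non-abelian simple, so no inductive condition applies to it), so citing \cite{FMR26} as you propose is the right choice.
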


\begin{proof}
Follow by \cite[Thm.~2.13]{FMR26}, \cite[Thm.~5]{MNT23} and Corollary~\ref{cor:great7} directly.
\end{proof}

%%%%%%%%%%%%%%%%%%%%%%%%%%%%%%%%%%%%%%%%%%%%%%%%%%%%%%%%%%%%%%%%%%%%%%%%


\begin{thebibliography}{99}
		
		
\bibitem{Al87}
{\sc J.\,L.~Alperin},  Weights for finite groups. In: \emph{The Arcata Conference on Representations of Finite Groups, Arcata, Calif. (1986), Part I}. Proc. Sympos. Pure Math., vol.~{\bf 47}, Amer. Math. Soc., Providence, 1987, pp. 369--379.
		
\bibitem{AD12}
{\sc J. An, H. Dietrich}, The AWC-goodness and essential rank of sporadic simple groups. \emph{J. Algebra \bf 356} (2012), 325--354.
	
\bibitem{AHL24}
{\sc J. An, G. Hiss,  F. L\"ubeck}, The inductive blockwise Alperin weight condition for the Chevalley groups~$F_{\!4}(q)$. \emph{Mem. Amer. Math. Soc. \bf 304} (2024), no. 1530.
		
\bibitem{Br16}		
{\sc T. Breuer}, Constructing the ordinary character tables of some Atlas groups using character theoretic methods. arXiv:1604.00754v2.				
		
\bibitem{BM92}
{\sc M. Brou\'e, G. Malle}, Th\'eor\`emes de Sylow g\'en\'eriques pour les groupes r\'eductifs sur les corps finis. \emph{Math. Ann. \bf 292} (1992), 241--262.

\bibitem{BMM93}
{\sc M. Brou\'e, G. Malle, J. Michel}, Generic blocks of finite reductive groups. \emph{Ast\'erisque \bf 212} (1993), 7--92.
		
\bibitem{BS22}
{\sc J. Brough, B. Sp\"ath}, A criterion for the inductive Alperin weight  condition. \emph{Bull. London Math. Soc. \bf 54} (2022), 466--481.
		
\bibitem{Ca94}
{\sc M. Cabanes}, Unicit\'e du sous-groupe ab\'elien distingu\'e maximal dans certains sous-groupes de Sylow. \emph{C. R. Acad. Sci. Paris S\'er. I Math. \bf 318} (1994), 889--894. 		
		
\bibitem{CE94}
{\sc M. Cabanes, M. Enguehard}, On unipotent blocks and their ordinary characters. \emph{Invent. Math. \bf 117} (1994), 149--164.
		
\bibitem{CE04}
{\sc M. Cabanes, M. Enguehard}, \emph{Representation Theory of Finite Reductive Groups}. New Math. Monogr., {vol.~\bf 1}, Cambridge University Press, Cambridge, 2004.
		
\bibitem{CS13}
{\sc M. Cabanes, B. Sp\"ath}, Equivariance and extendibility in finite reductive groups with connected center. \emph{Math. Z. \bf 275} (2013), 689--713.
		
\bibitem{CS17a}
{\sc M. Cabanes, B. Sp\"ath}, Equivariant character correspondences and inductive McKay condition for type~$\mathsf A$. \emph{J. reine angew. Math. \bf 728} (2017), 153--194.
		
\bibitem{CS17b}
{\sc M. Cabanes, B. Sp\"ath}, Inductive McKay condition for finite simple groups of type~$\mathsf C$. \emph{Represent. Theory \bf 21} (2017), 61--81.
		
\bibitem{CS19}
{\sc M. Cabanes, B. Sp\"ath}, Descent equalities and the inductive McKay condition for types~$\ty B$ and~$\ty E$. \emph{Adv. Math. 356} (2019), 106820.
		
\bibitem{CS26}
{\sc M. Cabanes, B. Sp\"ath}, The McKay Conjecture on character degrees.  \emph{Ann. of Math. (2) \bf 203} (2026), 933--1032.
		
\bibitem{CCNPW85}
{\sc J.\,H.~Conway, R.\,T.~Curtis, S.\,P.~Norton, R.\,A.~Parker, R.\,A.~Wilson}, \emph{Atlas of Finite Groups: Maximal Subgroups and Ordinary Characters for Simple Groups}. Oxford University Press, Eynsham, 1985.

\bibitem{CR62}
{\sc C.\,W.~Curtis, I. Reiner}, \emph{Representation Theory of Finite Groups and Associative Algebras}. Pure Appl. Math., vol. {\bf 11}. Interscience Publishers, New York--London, 1962.

\bibitem{DM90}
{\sc F. Digne, J. Michel},  On Lusztig's parametrization of characters of finite groups of Lie type. \emph{Ast\'erisque \bf 181--182} (1990), 113--156.
				
\bibitem{F25}
{\sc Z. Feng}, Character triples and weights. \emph{J. Algebra \bf 684} (2025), 690--725.		
				
\bibitem{FLZ21} 
{\sc Z. Feng, C. Li, J. Zhang}, Equivariant correspondences and the inductive Alperin weight condition for type~$\ty A$. \emph{Trans. Amer. Math. Soc. \bf 374} (2021),  8365--8433.

\bibitem{FLZ22} 
{\sc Z. Feng, C. Li, J. Zhang}, Inductive blockwise Alperin weight condition for type~$\ty B$ and odd primes. \emph{J. Algebra \bf 604} (2022), 533--576.

\bibitem{FLZ23}
{\sc Z. Feng, Z. Li, J. Zhang}, Morita equivalences and the inductive blockwise Alperin weight condition for type~$\mathsf A$. \emph{Trans. Amer. Math. Soc. \bf 376} (2023), 6497--6520.

\bibitem{FLZ26}
{\sc Z. Feng, Z. Li, J. Zhang}, Decomposition matrix determinants, basic sets and inductive blockwise Alperin weight condition.  Submitted.

\bibitem{FM22}
{\sc Z. Feng, G. Malle}, The inductive blockwise Alperin weight condition for type~$\ty C$ and the prime~2.  \emph{J. Aust. Math. Soc. \bf 113} (2022), 1--20.
		
\bibitem{FMR26}
{\sc Z.	Feng, J. M. Mart\'{i}nez, D. Rossi},  Alperin's bound and normal Sylow subgroups. \emph{J. London Math. Soc. (2) \bf 113} (2026), Paper No. e70470.

\bibitem{FS23}
{\sc Z. Feng, B. Sp\"ath}, Unitriangular basic sets, Brauer characters and coprime actions. \emph{Represent. Theory \bf 27} (2023), 115--148.

\bibitem{FW26}
{\sc Z. Feng, L. Wu}, The Brauer $A(\infty)$ condition and Navarro's conjecture on Brauer characters under coprime actions. 	arXiv:2609.26441.		

\bibitem{FYZ23}
{\sc Z. Feng, J. Yu, J. Zhang}, Radical subgroups of finite reductive groups. 	arXiv:2401.00156.

	
\bibitem{Ge93}
{\sc M. Geck}, Basic sets of Brauer characters of finite groups of Lie type II.  {\em  J. London Math. Soc. (2)  \bf 47} (1993), 255--268.
		
\bibitem{GH91}
{\sc M. Geck, G. Hiss}, Basic sets of Brauer characters of finite groups of Lie type.  {\em  J. reine angew. Math. \bf 418} (1991), 173--188.
		
\bibitem{GM20}
{\sc M. Geck, G. Malle}, \emph{The Character Theory of Finite Groups of Lie Type: A Guided Tour}. Cambridge Stud. Adv. Math., vol. {\bf~187}, Cambridge University Press, Cambridge, 2020.



\bibitem{GLS98}
{\sc D.~Gorenstein, R.~Lyons, R.~Solomon}, \emph{The Classification of the Finite Simple Groups, Number 3}. Math. Surveys Monogr., {vol. \bf~40}, American Mathematical Society, Providence, RI, 1998.
		
\bibitem{GHKMW93}
{\sc R. Gow, B. Huppert, R. Kn\"orr, O. Manz, W. Willems}, \emph{Representation Theory in Arbitrary Characteristic}. Casa Editrice Dott. Antonio Milani (CEDAM), Padua, 1993.		
		
\bibitem{Is06}
{\sc I.\,M.~Isaacs}, \emph{Character Theory of Finite Groups}.  AMS Chelsea Publishing, Providence, RI, 2006.
		
\bibitem{Is08}
{\sc I.\,M.~Isaacs},  \emph{Finite Group Theory}. Grad. Stud. Math., vol. {\bf~92}, American Mathematical Society, Providence, RI, 2008.		
		
\bibitem{IMN07}
{\sc I.\,M.~Isaacs, G.~Malle, G.~Navarro}, A reduction theorem for the McKay conjecture. \emph{Invent. Math. \bf 170} (2007), 33--101.
			
\bibitem{JLPW95}
{\sc C.~Jansen, K.~Lux, R.~Parker, R.~Wilson}, \emph{An Atlas of Brauer Characters}. London Math. Soc. Monogr. New Series, {\bf~11}, Oxford Science Publications. The Clarendon Press, Oxford University Press, New York, 1995.

\bibitem{Ka92}		
{\sc G. Karpilovsky}, \emph{Group Representations, vol. 1. part B.,
Introduction to Group Representations and Characters}. North-Holland Mathematics Studies, {\bf 175}. North-Holland Publishing Co., Amsterdam, 1992.
		
\bibitem{KM15}
{\sc R. Kessar, G. Malle}, Lusztig induction and $\ell$-blocks of finite reductive groups. \emph{Pacific J. Math. \bf 279} (2015), 269--298.
		
\bibitem{KS16}
{\sc S. Koshitani, B. Sp\"ath}, The inductive Alperin--McKay and blockwise Alperin weight conditions for blocks with cyclic defect groups and odd primes. \emph{J. Group Theory \bf 19} (2016), 777--813.

\bibitem{Li21}
{\sc C. Li}, The inductive blockwise Alperin weight condition for $\mathrm{PSp}_{2n}(q)$ and odd primes. \emph{J. Algebra \bf 567} (2021), 582--612.

\bibitem{Lu77}
{\sc G. Lusztig}, Irreducible representations of finite classical groups. \emph{Invent Math \bf 43} (1977), 125--175.

\bibitem{Lu84}
{\sc G. Lusztig}, \emph{Characters of Reductive Groups Over a Finite Field}. Ann. of Math. Stud., {vol. \bf~107}, Princeton University Press, Princeton, NJ, 1984.
		
\bibitem{Lu88}
{\sc G. Lusztig}, On the representations of reductive groups with disconnected centre. \emph{Ast\'erisque \bf 168} (1988), 157--166.
		
\bibitem{Lu08}
{\sc G. Lusztig}, Irreducible representations of finite spin groups. \emph{Represent. Theory \bf 12} (2008), 1--36.
		
\bibitem{Ma07}
{\sc G. Malle}, Height 0 characters of finite groups of Lie type. \emph{Represent. Theory \bf 11} (2007), 192--220.
		
\bibitem{Ma08}
{\sc G. Malle}, The inductive McKay condition for simple groups not of Lie type. \emph{Comm. Algebra \bf 36} (2008), 455--463.
		
\bibitem{Ma14}
{\sc G. Malle}, On the inductive Alperin--McKay and Alperin weight conjecture for groups with abelian Sylow subgroups. \emph{J. Algebra \bf 397} (2014), 190--208.
		
\bibitem{MNT23}
{\sc G. Malle, G. Navarro, P. H. Tiep}, On Alperin's lower bound for the number of Brauer characters. \emph{Transform. Groups \bf 28} (2023), 1205--1220.
		
\bibitem{MS16}
{\sc G. Malle, B. Sp\"ath}, Characters of odd degree.  \emph{Ann. of Math. (2)  \bf 184}  (2016), 869--908.
		
\bibitem{MT11}
{\sc G. Malle, D. Testerman}, \emph{Linear Algebraic Groups and Finite Groups of Lie Type}. Cambridge Stud. Adv. Math., {vol. \bf~133}, Cambridge University Press, Cambridge, 2011.
		
		
		
\bibitem{NT89}
{\sc H. Nagao, Y. Tsushima}, \emph{Representations of Finite Groups}. Academic Press Inc., Boston, 1989.
		
\bibitem{Na98}
{\sc G. Navarro}, \emph{Characters and Blocks of Finite Groups}. London Mathematical Society Lecture Note Series, {vol. \bf~250}. Cambridge University Press, Cambridge, 1998.

\bibitem{Na18}
{\sc G. Navarro},  \emph{Character Theory and the McKay Conjecture}. Cambridge Stud. Adv. Math., vol. {\bf~175}, Cambridge University Press, Cambridge, 2018.
		
\bibitem{NS14}
{\sc G. Navarro, B. Sp\"ath},  On Brauer's height zero conjecture. \emph{J. Eur. Math. Soc.  \bf 16} (2014),  695--747.
		
\bibitem{NT11}
{\sc G. Navarro, P. H. Tiep},   A reduction theorem for the {A}lperin weight conjecture. \emph{Invent. Math. \bf 184} (2011), 529--565.
				
\bibitem{Ro22}
{\sc D. Rossi}, Character triple conjecture for $p$-solvable groups. \emph{J. Algebra \bf 595} (2022), 165--193.

\bibitem{Ro23}
{\sc D. Rossi}, The McKay conjecture and central isomorphic character triples. \emph{J. Algebra \bf 618} (2023),42--55.
		
\bibitem{Sc16}
{\sc E. Schulte}, The inductive blockwise Alperin weight condition for $G_2(q)$ and ${}^3D_4(q)$. \emph{J. Algebra \bf 466} (2016), 314--369.		
	
\bibitem{ST54}
{\sc G. C. Shephard, J. A. Todd}, Finite unitary reflection groups. \emph{Canad. J. Math. \bf 6} (1954), 274--304.	
	
\bibitem{Sp09}
{\sc B. Sp\"ath}, The McKay conjecture for exceptional groups and odd primes. \emph{Math. Z. \bf 261} (2009), 571--595.
	
\bibitem{Sp10a}
{\sc B. Sp\"ath}, Sylow $d$-tori of classical groups and the McKay conjecture.~I. \emph{J. Algebra \bf 323} (2010), 2469--2493.
	
\bibitem{Sp10b}
{\sc B. Sp\"ath}, Sylow $d$-tori of classical groups and the McKay conjecture.~II. \emph{J. Algebra \bf 323} (2010), 2494--2509.
	
\bibitem{Sp12}
{\sc B. Sp\"ath}, Inductive McKay condition in defining characteristic. \emph{Bull. Lond. Math. Soc. \bf 44} (2012), 426--438.
		

		
\bibitem{Sp17}
{\sc B. Sp\"ath}, Inductive conditions for counting conjectures via character triples. In: \emph{Representation Theory -- Current Trends and Perspectives}. EMS Ser. Congr. Rep., Eur. Math. Soc., Z\"urich, 2017, pp.~665--680.
		
\bibitem{Sp18}
{\sc B. Sp\"ath}, Reduction theorems for some global-local conjectures. In: \emph{Local Representation Theory and Simple Groups}. EMS Ser. Lect. Math., Eur. Math. Soc., Z\"urich, 2018, pp.~23--61.

\bibitem{Sp23}
{\sc B. Sp{\"a}th}, Extensions of characters in type $\ty D$ and the inductive McKay condition, I. \emph{Nagoya Math. J. \bf 252} (2023), 906--958. 
		
\bibitem{Sp25}
{\sc B. Sp\"ath}, Extensions of characters in type $\ty D$ and the inductive McKay condition, II. \emph{Invent. Math. \bf 242} (2025), 45--122.
		
\bibitem{SV16}
{\sc B. Sp\"ath, C. Vallejo Rodr\'{i}guez}, Brauer characters and coprime action. \emph{J. Algebra \bf 457} (2016), 276--311.
				
\bibitem{GAP}
{\sc The GAP Group}, GAP -- Groups, Algorithms, and Programming, Version 4.15.1; 2025. (http://www.gap-system.org)					

\bibitem{Wi18}
{\sc R.\,A.~Wilson}, Maximal subgroups of $^2\ty E_6(2)$ and its automorphism groups. arXiv:1801.08374.

				
\end{thebibliography}
\end{document}